\newlist{primenumerate}{enumerate}{1}
\setlist[primenumerate,1]{label={\arabic*$'$}}
\DeclareSymbolFont{cyrletters}{OT2}{wncyr}{m}{n}
\DeclareMathSymbol{\Sha}{\mathalpha}{cyrletters}{"58}
\newcommand{\hh}{{\heartsuit\heartsuit}}
\newtheorem{theorem}{Theorem}[section]
\newtheorem{lemma}[theorem]{Lemma}
\newtheorem{proposition}[theorem]{Proposition}
\newtheorem{corollary}[theorem]{Corollary}
\newtheorem{definition}[theorem]{Definition}
\newtheorem*{notn}{Notation}
\numberwithin{equation}{section}
\newtheorem{lthm}{Theorem}
\theoremstyle{remark}
\newtheorem{remark}[theorem]{Remark}
\newtheorem{example}[theorem]{Example}
\newcommand{\EC}{\mathsf{E}}
\newcommand{\bb}{{\bullet}}
\newcommand{\Ep}{\EC[p^\infty]}
\newcommand{\fp}{\mathfrak{p}}
\newcommand{\fq}{\mathfrak{q}}
\newcommand{\Tr}{\operatorname{Tr}}
\newcommand{\Gal}{\operatorname{Gal}}
\newcommand{\PP}{\mathbb{P}}
\newcommand{\Qp}{\mathbb{Q}_p}
\newcommand{\Zp}{\mathbb{Z}_p}
\newcommand{\ord}{\mathrm{ord}}
\newcommand{\cD}{\mathcal{D}}
\newcommand{\Z}{\mathbb{Z}}
\newcommand{\Q}{\mathbb{Q}}
\newcommand{\F}{\mathbb{F}}
\newcommand{\cL}{\mathcal{L}}
\newcommand{\cK}{\mathcal{K}}
\newcommand{\cT}{\mathcal{T}}
\newcommand{\rk}{\mathrm{rank}}
\newcommand{\tors}{\mathrm{tors}}
\newcommand{\fr}{\mathfrak{r}}
\newcommand{\image}{\mathrm{Im}}
\newcommand{\Hom}{\mathrm{Hom}}
\newcommand{\Sel}{\mathrm{Sel}}
\newcommand{\Char}{\mathrm{char}}
\newcommand{\coker}{\mathrm{coker}}
\newcommand{\rank}{\mathrm{rank}}
\newcommand{\corank}{\mathrm{corank}}
\newcommand{\fP}{\mathfrak{P}}
\newcommand{\cyc}{\mathrm{cyc}}
\newcommand{\sM}{\mathscr{M}}
\newcommand{\sT}{\mathscr{T}}
\newcommand{\ac}{{\mathrm{ac}}}
\newcommand{\ann}{{\mathrm{ann}}}
\newcommand{\bc}{{\bullet\star}}
\theoremstyle{plain} 
\newtheorem*{intr@thm}{\intr@thmname}
\newtheorem*{c@njecture}{\conjn@name}
\newcommand{\myl@bel}[2]{
 \protected@write \@auxout {}{\string \newlabel {#1}{{#2}{\thepage}{#2}{#1}{}} }
 \hypertarget{#1}{}
 } 
\newenvironment{labelledconj}[3][]
 {
 \def\conjn@name{#2}
 \begin{c@njecture}[{#1}]\myl@bel{#3}{#2}
 }
 {
 \end{c@njecture}
 }
\newcommand{\mylabel}[2]{#2\def\@currentlabel{#2}\label{#1}}
\title[]{On a conjecture of Mazur predicting the growth of Mordell--Weil ranks in $\Zp$-extensions}
\author[R.~Gajek-Leonard]{Rylan Gajek-Leonard}
\address[Gajek-Leonard]{Department of Mathematics\\
Union College\\
Bailey Hall 108B\\
Schenectady, NY 12308\\
USA}
\email{gajekler@union.edu}
\author[J.~Hatley]{Jeffrey Hatley}
\address[Hatley]{
Department of Mathematics\\
Union College\\
Bailey Hall 202\\
Schenectady, NY 12308\\
USA}
\email{hatleyj@union.edu}
\author[D.~Kundu]{Debanjana Kundu}
\address[Kundu]{Department of Mathematical and Statistical Sciences\\ UTRGV \\ 1201 W University Dr.\\ Edinburg, TX 78539\\ USA}
\email{dkundu@math.toronto.edu}
\author[A.~Lei]{Antonio Lei}
\address[Lei]{Department of Mathematics and Statistics\\University of Ottawa\\
150 Louis-Pasteur Pvt\\
Ottawa, ON\\
Canada K1N 6N5}
\email{antonio.lei@uottawa.ca}
\keywords{Mazur's Growth Number Conjecture}
\subjclass[2020]{Primary 11R23}
\begin{document}

\maketitle

\begin{abstract}
Let $p$ be an odd prime.
We study Mazur's conjecture on the growth of the Mordell--Weil ranks of an elliptic curve $E/\mathbb{Q}$ over $\Zp$-extensions of an imaginary quadratic field, where $p$ is a prime of good reduction for $E$.
In particular, we obtain criteria that may be checked through explicit calculation, thus allowing for the verification of Mazur's conjecture in specific examples.
\end{abstract}

\section{Introduction}
Let $\EC/K$ be an elliptic curve over a number field $K$ with good reduction at all primes above an odd prime number $p$.
By the Mordell--Weil theorem, for any \textit{finite} extension $L/K$, the group $\EC(L)$ of $L$-rational points is a finitely generated $\Z$-module, and thus has finite rank.
Upon replacing $L$ with an {\it infinite} algebraic extension of $K$, this may no longer be true.
In particular, if we consider a $\Zp$-extension of $K$, then the Mordell--Weil rank of $\EC$ may be unbounded in this infinite tower.

By the famous work of K.~Kato \cite{kato04} and D.~Rohrlich \cite{rohrlich88}, when $K=\Q$ and $\Q_\cyc$ is the unique cyclotomic $\Zp$-extension of $\Q$, the rank of $\EC(\Q_\cyc)$ indeed remains finite.
However, when $K/\Q$ is an imaginary quadratic field and $K_\ac$ is the corresponding anti-cyclotomic $\Zp$-extension, there are examples where $\EC(K_\ac)$ has infinite rank (see, for example, \cite[(1.9) or (1.10)]{Gre_PCMS}).

In \cite[\S~18, p.~201]{Maz84}, the following conjecture was made regarding the growth numbers in the \emph{generic} setting (see Remark~\ref{clarifying Mazur's conj}).

\begin{labelledconj}{Mazur's Growth Number Conjecture}{Mazur-conj} Suppose $\EC/\Q$ has good reduction at $p$.
The Mordell--Weil rank of $\EC$ stays bounded along any $\Zp$-extension of the imaginary quadratic field $K$, unless the extension is the anti-cyclotomic one and the root number of $\EC/K$ is $-1$.
\end{labelledconj}

The above conjecture was originally made by B.~Mazur when $p$ is a prime of good ordinary reduction.
This conjecture was later extended to supersingular primes (for example, in \cite{leisprung}).

For elliptic curves with complex multiplication (CM), the aforementioned conjecture is well understood from the works of K.~Rubin, D.~Rohrlich, B.~Gross-D.~Zagier, and R.~Greenberg (see for example \cite[Theorems~1.7 and 1.8]{Gre_PCMS}).
More precisely, let $\EC/\Q$ be an elliptic curve \textit{with complex multiplication} by an imaginary quadratic field $K$.
If $p$ is any prime, then the Mordell--Weil rank of $\EC$ remains bounded in every $\Zp$-extension which is different from $K_{\ac}$.
In the anti-cyclotomic $\Zp$-extension $K_{\ac}/K$, the Mordell--Weil rank of a CM elliptic curve is always unbounded if $p$ is a prime of good \textit{supersingular} reduction.
On the other hand, if $p$ is a prime of good \textit{ordinary} reduction then the boundedness of the Mordell--Weil rank of $\EC$ in $K_{\ac}/K$ depends on the parity of the order of vanishing of the Hasse-Weil $L$-function $L(\EC/\Q,s)$ at $s=1$.

Let $K$ be an imaginary quadratic field in which $p$ is unramified and write $K_\infty$ for the $\Zp^2$-extension of $K$.
When $p$ is a prime of good \emph{ordinary} reduction, it has been proven in \cite{KMS} that the number of $\Zp$-extensions of $K$ where the rank of $\EC$ does not stay bounded is at most $\min_H\{\lambda_H\}$, where $H$ runs over subgroups of $\Gal(K_\infty/K)$ such that $K_\infty^H$ is an admissible (in the sense of Definition~1.1 of \emph{op. cit.}) $\Zp$-extension of $K$ satisfying the $\mathfrak{M}_H(G)$-property, and $\lambda_H$ is the $\lambda$-invariant of the Pontryagin dual of the $p$-primary Selmer group of $E$ over that $\Zp$-extension.
In particular, the authors prove that \ref{Mazur-conj} holds when the Mordell--Weil rank of $\EC(K)$ is zero or one, under some hypotheses (see in particular Theorems 9.3 and 9.4 in \textit{op. cit.}).
For partial progress toward \ref{Mazur-conj} in the case of non-CM elliptic curves with supersingular reduction at $p$, we refer the reader to \cite{leisprung,hunglim}.

The goal of this article is to study this conjecture for elliptic curves with good reduction at $p$ (in both ordinary and supersingular settings).
In particular, we obtain criteria (see Corollaries~\ref{cor:main-result-ord} and \ref{cor:main-result-ss}) that are checked through explicit calculation (see \S~\ref{sec:SC hyp}), thus allowing for the verification of Mazur's conjecture in many specific examples, as illustrated in \S~\ref{sec:examples}.
The only unverifiable assumption is on the finiteness of the Shafarevich--Tate group.
All other hypotheses are provable or computable.

Our first main result is to prove sufficient conditions for \ref{Mazur-conj}.

\begin{lthm}
\label{Thm A}
Let $\EC/\Q$ be an elliptic curve of conductor $N_{\EC}$ and let $K$ be an imaginary quadratic field in which $p$ is unramified.
Let $p$ be an odd prime of good reduction of $\EC$.
Let $\Lambda$ denote the 2-variable Iwasawa algebra $\Zp\llbracket X,Y\rrbracket \simeq \Zp\llbracket \Gal(K_\infty/K)\rrbracket$ where $K_\infty/K$ is the unique $\Zp^2$-extension of $K$.
Further set $Y$ to denote the cyclotomic variable, i.e., $\Lambda_{\cyc}=\Zp\llbracket Y\rrbracket$.
In addition,
\begin{itemize}
\item if $p$ is a prime of good \emph{ordinary} reduction, suppose that
\begin{itemize}
\item  $a_v(\EC/K) \not\equiv 1 \pmod{p}$ for every $v \mid p$ in $K$.
\item $\Sel(\EC/K_\infty)^\vee$ is a direct sum of cyclic torsion $\Lambda$-modules.
\item The modified Heegner hypothesis\footnote{see \eqref{hyp: GHH} for the precise definition; the sign of the functional equation of $L(\EC/K,1)$ is $-1$ under this condition.} holds for the pair $(K, N_{\EC})$.
\item $\Sha(\EC/K_{\ac,(n)})[p^\infty]$ is finite for all $n\ge0$, where $K_{\ac,(n)}$ is the unique sub-extension of $K_\ac/K$ of degree $p^n$.
\item $\ord_{Y}\left({\Char_{\Lambda_{\cyc}}}\left(\Sel(\EC/K_\cyc)^\vee\right)\right)=1.$
\end{itemize}
\item if $p$ is a prime of good \emph{supersingular} reduction, suppose that
\begin{itemize}
\item $p$ splits in $K$ and both primes above $p$ in $K$ are totally ramified in $K_{\ac}/K$.
\item $a_p(\EC/\Q)=0$.
\item $\Sel(\EC/K)$is finite.
\end{itemize}
\end{itemize}
Then \ref{Mazur-conj} holds.
\end{lthm}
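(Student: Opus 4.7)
The plan is to reformulate Mazur's conjecture for each $\Zp$-extension $L_\infty/K$ as a statement about the $\Zp$-corank of $\Sel(\EC/L_n)$, and then to control these coranks uniformly using the two-variable Iwasawa structure over $K_\infty$. Under the assumed finiteness of the relevant Shafarevich--Tate groups --- directly in the ordinary case, and via control theorems from $\Sel(\EC/K)$ finite in the supersingular case --- the Mordell--Weil rank of $\EC(L_n)$ is bounded by $\corank_{\Zp}\Sel(\EC/L_n)$. Since every $\Zp$-extension $L_\infty/K$ arises as $K_\infty^H$ for a closed rank-one subgroup $H \subset \Gal(K_\infty/K) \cong \Zp^2$, it corresponds to a height-one prime $\fp_L \subset \Lambda$; after choosing $X$ so that $(X)$ cuts out the anticyclotomic line, this prime is $(Y)$ for the cyclotomic direction, $(X)$ for the anticyclotomic direction, and a mixed linear form otherwise.

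In the ordinary case, the structural hypothesis $\Sel(\EC/K_\infty)^\vee \simeq \bigoplus_i \Lambda/(f_i)$, combined with a control theorem whose local kernels and cokernels at primes $v \mid p$ are finite precisely because $a_v(\EC/K) \not\equiv 1 \pmod p$, reduces the boundedness of the Mordell--Weil rank in $L_\infty$ to showing that $\fp_L \nmid f_i$ for every $i$. The cyclotomic hypothesis $\ord_Y \Char_{\Lambda_\cyc}(\Sel(\EC/K_\cyc)^\vee) = 1$ forces $\sum_i \ord_Y\bigl(f_i|_{X=0}\bigr) = 1$, so exactly one distinguished factor $f_{i_0}$ is divisible by $Y$ after restriction to the cyclotomic direction. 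I would then use the modified Heegner hypothesis --- which forces the root number of $\EC/K$ to be $-1$ and is the standard source of unbounded anticyclotomic Heegner-type growth --- to argue that $f_{i_0}$ is divisible by $X$ alone, so that $\fp_L \mid f_{i_0}$ forces $\fp_L = (X)$. For every other direction, $\Sel(\EC/L_\infty)^\vee$ is then $\Lambda_L$-torsion with finite $\lambda$-invariant and the rank stays bounded.

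In the supersingular case, with $p$ split in $K$, $a_p(\EC/\Q) = 0$, and both primes above $p$ totally ramified in $K_\ac/K$, I would work with Kobayashi--Kim-type signed Selmer groups $\Sel^{\bc}(\EC/K_\infty)$ for $\bcpm$, which are $\Lambda$-modules. The finiteness of $\Sel(\EC/K)$ propagates through signed control theorems to $\Lambda$-cotorsion of each $\Sel^{\bc}(\EC/K_\infty)^\vee$ with trivial $\mu$-invariant; matching mixed and signed Selmer groups at finite layers then bounds $\corank_{\Zp}\Sel(\EC/L_n)$ uniformly in $n$ for every $\Zp$-extension $L_\infty/K$, yielding \ref{Mazur-conj}. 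The main obstacle I anticipate is, in the ordinary case, rigorously linking the cyclotomic $\ord_Y = 1$ data and the global root number to pin down the distinguished $f_{i_0}$ inside the prime $(X)$ and not in some oblique prime $(aX + bY + \cdots)$; in the supersingular case, the parallel challenge is assembling the signed control theorems uniformly across every $\Zp$-direction, not only the anticyclotomic one.
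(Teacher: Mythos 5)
Your ordinary-case skeleton (control theorem, the \eqref{hyp:SC} decomposition, and reading off $\ord_Y=1$ on the cyclotomic line to limit the number of rank-unbounded directions) is the same as the paper's, but the crux you explicitly defer --- pinning the distinguished factor to the anticyclotomic prime, so that no oblique $f_{a,b}$ can divide $\Char_\Lambda(\Sel(\EC/K_\infty)^\vee)$ --- is precisely where the real input is needed, and it cannot be extracted from root-number formalism alone. The paper resolves it by quoting Nekov\'a\v{r} and Vatsal (via the appendix of Longo--Vigni): under \eqref{hyp: GHH} one has $\rank_{\Zp}\Sel(\EC/K_{\ac,(n)})^\vee=p^n+O(1)$, and the assumed finiteness of $\Sha(\EC/K_{\ac,(n)})[p^\infty]$ then converts this Selmer growth into genuine Mordell--Weil growth; hence the anticyclotomic direction lies in the set $\sM(\EC,K)$, which by the counting bound of Proposition~\ref{prop:ord-count} has at most one element, so every other direction is bounded. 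Note also that you have the role of the Sha hypothesis backwards: the inequality $\rank_{\Z}\EC(L_n)\le\corank_{\Zp}\Sel(\EC/L_n)$ is unconditional, and Sha-finiteness is needed for the reverse implication, i.e.\ to show the anticyclotomic Mordell--Weil rank is actually unbounded. Finally, passing from \eqref{hyp:SC} to a decomposition $\bigoplus_i\Lambda/(f_i)$ with \emph{principal} ideals, and the compatibility $\pi_\cyc\left(\Char_\Lambda(\Sel(\EC/K_\infty)^\vee)\right)=\Char_{\Lambda_\cyc}(\Sel(\EC/K_\cyc)^\vee)$ on which your ``$\sum_i\ord_Y(f_i|_{X=0})=1$'' rests, both require the vanishing of the maximal pseudo-null submodule; this is a second, separate use of the non-anomalous hypothesis, not just a control-theorem ingredient.

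In the supersingular case your route ``$\Sel(\EC/K)$ finite $\Rightarrow$ the four $\Sel^\bc(\EC/K_\infty)^\vee$ are $\Lambda$-cotorsion $\Rightarrow$ bounded coranks in every direction'' has a genuine gap: torsionness over the two-variable algebra does not imply that $\Sel^\bc(\EC/K_{a,b})^\vee$ is $\Lambda_{a,b}$-torsion, since a torsion $\Lambda$-module can have non-torsion $H_{a,b}$-coinvariants (this would amount to knowing $f_{a,b}\nmid\Char_\Lambda$, which mere torsionness does not give). The paper instead descends directly from $K$: the control-type diagram of Proposition~\ref{prop:torsion-finite} gives an injection $\Sel(\EC/K)\hookrightarrow\Sel^\bc(\EC/K_{a,b})^{\Gamma_{a,b}}$ with finite cokernel, so finiteness of $\Sel(\EC/K)$ forces the $\Gamma_{a,b}$-invariants to be finite and hence, by Nakayama, $\Sel^\bc(\EC/K_{a,b})^\vee$ is $\Lambda_{a,b}$-torsion for \emph{every} $(a:b)$ and every sign choice; combined with the injectivity $\Sel^\bc(\EC/\cK_n)\hookrightarrow\Sel^\bc(\EC/\cK)$ (using $\EC(K_\infty)[p^\infty]=0$) and the bound $\rank_{\Z}\EC(K_{(m,n)})\le\sum_{\bcpm}\rank_{\Zp}\Sel^\bc(\EC/K_{(m,n)})^\vee$ from the $\pm$-point exact sequence (Lemma~\ref{lem:bound-rank-pm}), this bounds the ranks along every $\Zp$-extension. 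Your claim of ``trivial $\mu$-invariant'' is neither needed nor justified anywhere in this argument.
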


\begin{remark}[Comparison with results in \cite{KMS}]
Suppose that $\EC/\Q$ is \textit{ordinary} at $p$ and that when $\ord_{Y}\left(\Char_{\Lambda_\cyc}(\Sel(\EC/K_\cyc)^\vee)\right)=0$, the Iwasawa main conjecture predicts that the Mordell--Weil rank of
$\EC(K)$ is zero.
In this case, \ref{Mazur-conj} has already been established in \cite[Theorem~9.3]{KMS}.

When the Mordell--Weil rank of $\EC(K)$ is one, still assuming $\EC$ is \textit{ordinary} at $p$, Theorem~9.4 of \textit{op. cit.} proves that \ref{Mazur-conj} holds under certain technical hypotheses, which are circumvented in Theorem~\ref{Thm A} (see Remark~\ref{rk:compare} for a more detailed discussion).

Let $\lambda_\cyc$ be the $\lambda$-invariant of $\Sel(\EC/K_\cyc)^\vee$.
When $\lambda_\cyc=1$ and the sign of the functional equation of $L(\EC/K,s)$ at $s=1$ is $-1$, \cite[Theorem~1.5]{KMS} implies the validity of \ref{Mazur-conj}.
Since $\lambda_\cyc\ge\ord_Y$ in general, Theorem~\ref{Thm A} can cover cases different from those covered by \textit{op. cit.}
However, we need to assume a semi-cyclicity condition on $\Sel(\EC/K_\infty)^\vee$.
This condition of semi-cyclicity is related to a question asked by H.~Hida in \cite[\S~7.7.8]{hida2022_book}.
We provide sufficient conditions in \S~\ref{sec:SC hyp} and explicit examples where this condition of semi-cyclicity is satisfied.
In fact, our explicit examples give rise to Selmer groups whose Pontryagin duals are cyclic (not just semi-cyclic).
This provides a partial positive answer to Hida's question
where he asks for a series of systematic examples of Iwasawa modules which are expected to be cyclic for most primes.

Finally, \cite{KMS} tackles \ref{Mazur-conj} when $p$ is an \textit{ordinary} prime, while we handle both the \textit{ordinary} and \textit{supersingular} settings.
\end{remark}

The following is a special case of Theorem~\ref{Thm A}, which follows from combining the aforementioned theorem with results of A.~Matar--J.~Nekov\'{a}\v{r} \cite{matarnekovar}.

\begin{lthm}
\label{thm B}
Let $\EC/\Q$ be an elliptic curve of conductor $N_{\EC}$ and $K$ be an imaginary quadratic field such that the Heegner hypothesis is satisfied for $(K,N_\EC)$.
Let $p$ be an odd prime of good \emph{ordinary} reduction of $\EC$.
Further suppose that $p$ does not divide the Tamagawa number of $\EC/\Q$ and the basic Heegner point $y_K\notin \EC(K)_{\tors}$.
In addition, assume that
\begin{itemize}
\item $a_v(\EC/K) \not\equiv 1 \pmod{p}$ for every $v \mid p$ in $K$.
\item $\Sha(\EC/K)[p^\infty]=0$.
\item $\ord_{Y}\left({\Char_{\Lambda_{\cyc}}}(\Sel(\EC/K_\cyc)^\vee)\right)=1$.
\item $\Sha(\EC/K_{\ac,(n)})[p^\infty]$ is finite for all $n\ge0$, where $K_{\ac,(n)}$ is the unique sub-extension of $K_\ac/K$ of degree $p^n$.
\end{itemize}
Then \ref{Mazur-conj} holds.
\end{lthm}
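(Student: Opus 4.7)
The plan is to derive Theorem~\ref{thm B} as a direct consequence of Theorem~\ref{Thm A}, by checking that the hypotheses of Theorem~\ref{thm B} imply each of the hypotheses required in the ordinary case of Theorem~\ref{Thm A}. Most of the hypotheses transfer immediately; the real work lies in verifying the two conditions that do not appear verbatim in Theorem~\ref{thm B}, namely the modified Heegner hypothesis and the semi-cyclicity of $\Sel(\EC/\Kinf)^\vee$.

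The first of these is painless: the classical Heegner hypothesis for $(K, N_{\EC})$ is strictly stronger than the modified Heegner hypothesis \eqref{hyp: GHH}, so the latter holds automatically. The remaining shared hypotheses, namely $a_v(\EC/K) \not\equiv 1 \pmod{p}$ for every $v \mid p$, the finiteness of $\Sha(\EC/K_{\ac,(n)})[p^\infty]$ for all $n \ge 0$, and the equality $\ord_{Y}(\Char_{\Lambda_\cyc}(\Sel(\EC/K_\cyc)^\vee)) = 1$, are each assumed verbatim in Theorem~\ref{thm B} and require no further argument.

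The main step, and the principal technical obstacle, is to verify the semi-cyclicity condition on $\Sel(\EC/\Kinf)^\vee$. Here we invoke the structural results of Matar--Nekov\'a\v{r} \cite{matarnekovar}: under the Heegner hypothesis on $(K, N_\EC)$, the assumption that $p$ does not divide the Tamagawa number of $\EC/\Q$, the non-triviality of the basic Heegner point $y_K$ in $\EC(K)/\EC(K)_\tors$, and the vanishing of $\Sha(\EC/K)[p^\infty]$, their work produces a precise control on $\Sel(\EC/\Kinf)^\vee$ from which cyclicity over $\Lambda$ can be read off, together with the fact that it is $\Lambda$-torsion. Since a cyclic torsion module is trivially a direct sum of such modules (with a single summand), the semi-cyclicity hypothesis of Theorem~\ref{Thm A} is satisfied. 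The one point that warrants careful bookkeeping is matching the precise Selmer group conventions used in \cite{matarnekovar} with those adopted here, including the treatment of local conditions at primes of bad reduction, where the Tamagawa number hypothesis enters.

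Once these inputs are in place, every hypothesis of Theorem~\ref{Thm A} (ordinary case) holds for the pair $(\EC, K)$, and the conclusion, namely the validity of \ref{Mazur-conj} for $\EC/K$, follows by direct invocation of Theorem~\ref{Thm A}. Thus the entire argument is a reduction: Matar--Nekov\'a\v{r} supplies the structural input on $\Sel(\EC/\Kinf)^\vee$, and Theorem~\ref{Thm A} does the rest.
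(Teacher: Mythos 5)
Your proposal is correct and follows essentially the same route as the paper: Theorem~\ref{thm B} is deduced from the ordinary case of Theorem~\ref{Thm A} (i.e.\ Corollary~\ref{cor:main-result-ord}), with the classical Heegner hypothesis implying \eqref{hyp: GHH} and the Matar--Nekov\'a\v{r} result supplying the input for \eqref{hyp:SC}. One small precision: Theorem~\ref{thm: matar-nekovar 4.8} gives freeness of rank one of $\Sel(\EC/K_\ac)^\vee$ over $\Lambda_\ac$ (its rank-one hypothesis on $\EC(K)$ being supplied by $y_K\notin\EC(K)_\tors$ via Gross--Zagier and Kolyvagin), and the $\Lambda$-cyclicity of $\Sel(\EC/K_\infty)^\vee$ is then obtained from the control theorem (Proposition~\ref{prop:control-ord}) together with Nakayama's lemma, while the $\Lambda$-torsionness needed for \eqref{hyp:SC} comes from Proposition~\ref{prop:ordinary-fg-m0} (Kato plus Balister--Howson, under \eqref{hyp:non-a}) rather than from \cite{matarnekovar} itself.
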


We remind the reader that it follows from the work of B.~Gross--D.~Zagier and V.~Kolyvagin that the Mordell--Weil rank of $\EC(K)$ is 1 when the basic Heegner point $y_K\notin \EC(K)_{\tors}$.

\emph{Outlook}: Our elementary approach towards proving a long-standing conjecture of Mazur leads to sufficient conditions, which are easily verified through explicit computation.
Proving \ref{Mazur-conj} in full generality appears to be out of reach.
Thus, it might be interesting to investigate for what proportion of elliptic curves are the sufficient conditions from Theorem~\ref{Thm A} satisfied when $(K,p)$ are fixed.
In the same vein, if $(p,\EC)$ (resp. $(\EC,K)$) are fixed, can we calculate for what proportion of $K$ (resp. $p$) do the hypotheses of Theorem~\ref{Thm A} hold?

Appendix~\ref{appendix} is devoted to carrying out detailed calculations pertaining to properties of the signed Selmer groups over the anti-cyclotomic $\Zp$-extension of an imaginary quadratic field $K$.
Even though we can prove the supersingular analogues of most results required for proving Theorem~\ref{thm B}, our approach for proving \ref{Mazur-conj} falls short because when the Mordell--Weil rank of $\EC(K)$ is 1, we cannot show at present that the mixed signed Selmer groups are torsion over \emph{every} $\Zp$-extension of $K$.
Our approach will need to be refined further to handle this case.

Let $(\EC, K, p)$ be a tuple for which \ref{Mazur-conj} is valid.
Let $\cK$ be a $\Zp$-extension of $K$ where the Mordell--Weil rank of $\EC$ remains bounded.
Then there exists an integer $n_0(\cK)$ such that the rank of $\EC$ stops growing beyond the $n_0(\cK)$-th layer of this $\Zp$-extension.
When $\cK=K_{\cyc}$, it is known that $n_0(\cK)$ is independent of $p$, see \cite{chi02}.
It would be interesting to further investigate the variation of $n_0(\cK)$ as $\cK$ varies over all (but at most one) $\Zp$-extensions of $K$.

\section*{Acknowledgements}
The authors thank Francesc Castella, Haruzo Hida, S\"oren Kleine, Meng Fai Lim, Ahmed Matar, Barry Mazur, Robert Pollack, Jishnu Ray, and Stefano Vigni for helpful discussions during the preparation of this article.
We are grateful to S\"oren Kleine for sharing the preprint \cite{KMS} with us. We are also thankful to the referee for their helpful comments and suggestions on an earlier version of the article.
DK acknowledges the support of the AMS-Simons early career travel grant.
AL's research is supported by the NSERC Discovery Grants Program RGPIN-2020-04259 and RGPAS-2020-00096.

\section{Notation and Preliminaries}

\subsection{Elliptic curves and Selmer groups}
We collect notation and assumptions that will be in place throughout the article.

Let $\EC/\Q$ be an elliptic curve of conductor $N=N_{\EC}$ with good reduction at an odd prime $p$.
Let $K$ be an imaginary quadratic field.
The $\Zp^2$-extension of $K$ is denoted by $K_\infty$ and we write $G_\infty=\Gal(K_\infty/K)$.
The Iwasawa algebra $\Zp\llbracket G_\infty\rrbracket$ will be denoted by $\Lambda$.

Let $\Sigma$ be the set of primes in $\Q$ containing $p$ and all primes of bad reduction for $\EC$.
For any field $F/\Q$, define $\Sigma(F)$ to be the set of places of $F$ lying above those in $\Sigma$, and write $G_\Sigma(F)$ for the Galois group of the maximal extension of $F$ that is unramified outside of $\Sigma(F)$.
Furthermore, for any $v \in \Sigma$ and any finite extension $F/\Q$, write
\[
J_v(\EC/F)= \bigoplus_{w\mid v} H^1(F_w,\EC)[p^\infty].
\]
When $\mathcal{F}/F$ is an infinite extension of $F$, we set
\[
J_v(\EC/\mathcal{F}) = \varinjlim_{F\subseteq F' \subseteq \mathcal{F}} J_v(\EC/F').
\]

\begin{definition}\label{def:selmer-ord} Let $\EC/\Q$ be an elliptic curve.
Let $\Sigma$ be any finite set of primes containing those dividing $pN$.
For any extension $L/\Q$, define the Selmer group
\[
\Sel(\EC/L):=\ker \left( H^1(G_{\Sigma}(L),\EC[p^\infty]) \rightarrow
\prod_{v \in \Sigma} J_v(\EC/L) \right)
\]
\end{definition}

We have the following useful fact about Selmer groups and base change.

\begin{proposition}\label{prop:selmer-decomposition-ord}
Let $\EC/\Q$ be an elliptic curve, and let $\EC^{(K)}$ denote the twist of $E$ corresponding to $K$.
Then,
\[
\Sel(\EC/K_\cyc) \simeq \Sel(\EC/\Q_\cyc) \oplus \Sel(\EC^{(K)}/\Q_\cyc).
\]
\end{proposition}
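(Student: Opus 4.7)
The plan is to exploit the fact that $K_\cyc/\Q_\cyc$ is a quadratic extension with Galois group $\Delta := \Gal(K_\cyc/\Q_\cyc) = \langle \tau \rangle$ of order $2$. Because $p$ is odd, $|\Delta|=2$ is a unit in $\Zp$, so every discrete $\Zp[\Delta]$-module $M$ of $p$-power torsion decomposes canonically as $M = M^+ \oplus M^-$ into $\tau$-eigenspaces, with $M^\Delta = M^+$. The strategy is to identify $\Sel(\EC/\Q_\cyc)$ with the $+$-part of $\Sel(\EC/K_\cyc)$ and $\Sel(\EC^{(K)}/\Q_\cyc)$ with the $-$-part, and then sum.

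For the $+$-part I would apply the inflation-restriction sequence to the normal subgroup $G_\Sigma(K_\cyc) \triangleleft G_\Sigma(\Q_\cyc)$ with coefficients in $\EC[p^\infty]$. Since $|\Delta|$ is coprime to $p$, the groups $H^i(\Delta, \EC(K_\cyc)[p^\infty])$ vanish for $i \geq 1$, so restriction identifies $H^1(G_\Sigma(\Q_\cyc), \EC[p^\infty])$ with the $\Delta$-invariants -- equivalently the $+$-eigenspace -- of $H^1(G_\Sigma(K_\cyc), \EC[p^\infty])$. The same argument applied to the semi-local modules (summed over places $w \mid v$ of $K_\cyc$) gives $J_v(\EC/\Q_\cyc) \cong J_v(\EC/K_\cyc)^+$ for every $v \in \Sigma$. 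Passing to kernels of the $\Delta$-equivariant global-to-local maps yields $\Sel(\EC/\Q_\cyc) \cong \Sel(\EC/K_\cyc)^+$.

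For the $-$-part I would exploit that $\EC^{(K)}$ becomes isomorphic to $\EC$ over $K$, so $\EC^{(K)}[p^\infty] \cong \EC[p^\infty]$ as $G_{K_\cyc}$-modules, while $\EC^{(K)}[p^\infty] \cong \EC[p^\infty] \otimes \chi_{K/\Q}$ as $G_{\Q_\cyc}$-modules. Twisting by the quadratic character $\chi_{K/\Q}$ interchanges the $\tau$-eigenspaces, so repeating the inflation-restriction argument of the previous paragraph with $\EC^{(K)}$ in place of $\EC$ yields $\Sel(\EC^{(K)}/\Q_\cyc) \cong \Sel(\EC/K_\cyc)^-$. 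Combining this with the identification of the $+$-part delivers the claimed direct-sum decomposition.

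The only substantive point to verify is that the $\Delta$-equivariant restriction maps carry the $\Q_\cyc$-Selmer condition precisely onto the Selmer condition in each eigenspace over $K_\cyc$. This reduces to the local inflation-restriction argument above and works uniformly at split, inert, and ramified places of $K/\Q$, because the vanishing of $H^i(\Delta, -)$ on $p$-primary modules is insensitive to the splitting behaviour of $v$. An alternative, essentially equivalent route is via Shapiro's lemma: the induced module $\mathrm{Ind}_{G_{K_\cyc}}^{G_{\Q_\cyc}} \EC[p^\infty]$ splits as $\EC[p^\infty] \oplus \EC^{(K)}[p^\infty]$ since $\mathrm{Ind}_\Delta^{\Delta} \mathbb{1}$ decomposes as $\mathbb{1} \oplus \chi_{K/\Q}$, and one then matches up the local conditions prime-by-prime.
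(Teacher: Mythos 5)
Your proposal is correct and is essentially the argument the paper has in mind: the paper simply quotes the standard decomposition $H^1(K_\cyc,\EC[p^\infty]) \simeq H^1(\Q_\cyc,\EC[p^\infty]) \oplus H^1(\Q_\cyc,\EC^{(K)}[p^\infty])$ (and its local analogue), and your eigenspace/inflation--restriction argument, using that $|\Gal(K_\cyc/\Q_\cyc)|=2$ is invertible on $p$-primary modules and that the twist by $\chi_{K/\Q}$ swaps the eigenspaces, is exactly the standard proof of that fact together with the routine matching of local conditions.
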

\begin{proof}
It is a standard fact that
\[
H^1(K_\cyc,\EC[p^\infty]) \simeq H^1(\Q_\cyc, \EC[p^\infty]) \oplus H^1(\Q_\cyc,\EC^{(K)}[p^\infty]).
\]
A similar decomposition holds for local cohomology groups.
\end{proof}

\subsection{Preliminaries on Iwasawa modules}
\begin{definition}
Let $M$ be a finitely generated $\Lambda$-module.
Let $\fP$ be a prime ideal of $\Lambda$.
\item[\textup{i)}] The localization of $M$ at $\fP$ is denoted by $M_\fP$.
\item[\textup{ii)}] A $\Lambda$-module $M$ is called \textbf{pseudo-null} if $M_\fP=0$ for all prime ideals of $\Lambda$ of height $\le 1$.
Equivalently, if $\fP$ is a prime ideal such that $\ann_\Lambda(M)\subseteq \fP$, then the height of $\fP$ is at least 2.
\item[\textup{iii)}] The maximal pseudo-null submodule of $M$ is denoted by $M_{o}$.
\end{definition}

\begin{theorem}
\label{thm:structure}
Let $M$ be a direct sum of cyclic torsion $\Lambda$-modules such that $M_o=0$.
Then there exist principal ideals $I_1,\dots, I_m$ of $\Lambda$ such that $M\simeq \bigoplus_{i=1}^m\Lambda/I_i$.
\end{theorem}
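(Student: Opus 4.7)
The plan is to reduce to showing that any single cyclic torsion summand $\Lambda/I$ with no pseudo-null submodule has $I$ principal, and then appeal to the fact that $\Lambda = \Zp\llbracket X,Y\rrbracket$ is a regular local ring, hence a UFD (by Auslander--Buchsbaum).

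First, each summand $\Lambda/I_j$ is a submodule of $M$, so $(\Lambda/I_j)_o \hookrightarrow M_o = 0$, meaning each summand itself has no pseudo-null submodule. It therefore suffices to show: \emph{if $\Lambda/I$ is torsion and $(\Lambda/I)_o = 0$, then $I$ is principal}. Granting this, the finiteness of the direct sum follows because $M$ is finitely generated in the Iwasawa-theoretic contexts where this theorem will be applied (e.g.\ $M = \Sel(\EC/K_\infty)^\vee$), and any finite generating set involves only finitely many summands $\Lambda/I_j$.

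Next, I would analyze the associated primes of $\Lambda/I$. If some $\fp \in \mathrm{Ass}(\Lambda/I)$ had height $\geq 2$, then the cyclic submodule $\Lambda/\fp \hookrightarrow \Lambda/I$ generated by the corresponding element would be pseudo-null, contradicting $(\Lambda/I)_o = 0$. Since $\Lambda/I$ is torsion, the zero ideal is not associated either. So every associated prime has height exactly $1$. Because all associated primes share the same height, there are no embedded primes, and a primary decomposition takes the form $I = Q_1 \cap \cdots \cap Q_k$ where each $Q_i$ is $\fp_i$-primary for distinct height-one primes $\fp_1, \ldots, \fp_k$.

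Finally, I would apply the UFD hypothesis. Each $\fp_i = (f_i)$ for an irreducible $f_i$, and the localization $\Lambda_{\fp_i}$ is a DVR in which $Q_i \Lambda_{\fp_i} = (f_i^{n_i}) \Lambda_{\fp_i}$ for some $n_i \geq 1$. A $\fp_i$-primary ideal is determined by its localization at $\fp_i$ (if $Q, Q'$ are both $\fp_i$-primary with the same image in $\Lambda_{\fp_i}$, then for $q \in Q$ we get $sq \in Q'$ with $s \notin \fp_i$, and primariness forces $q \in Q'$), so $Q_i = (f_i^{n_i})$ as ideals in $\Lambda$. Since the $f_i$ are pairwise coprime irreducibles, $I = \bigcap_{i=1}^k (f_i^{n_i}) = \left(\prod_{i=1}^k f_i^{n_i}\right)$, which is principal. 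The main subtlety—and thus the main obstacle—is verifying that the $\fp_i$-primary component $Q_i$ really equals $(f_i^{n_i})$ as an ideal of $\Lambda$ and not merely after localization; the remaining ingredients are routine consequences of the UFD structure on $\Lambda$.
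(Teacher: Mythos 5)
Your proposal is correct, but it takes a genuinely different route from the paper. You argue structurally: torsionness plus $(\Lambda/I)_o=0$ forces every associated prime of $\Lambda/I$ to have height exactly one (a height $\ge 2$ associated prime $\fp$ would give a nonzero pseudo-null submodule $\Lambda/\fp$), hence a primary decomposition $I=Q_1\cap\cdots\cap Q_k$ with no embedded components; you then identify each $Q_i$ with $(f_i^{n_i})$ by localizing at the principal height-one prime $\fp_i=(f_i)$ (a DVR) and using that a $\fp_i$-primary ideal is recovered from its localization, so $I=\bigl(\prod_i f_i^{n_i}\bigr)$ is principal. The subtlety you flag — that $Q_i=(f_i^{n_i})$ in $\Lambda$ and not just after localization — is in fact settled by the contraction argument you give, together with the routine check that $(f_i^{n_i})$ is itself $\fp_i$-primary in a UFD, so there is no gap. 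The paper instead argues by contradiction much more directly: writing $I=(f_1,\dots,f_r)$ with $r\ge 2$, taking $h=\gcd(f_1,\dots,f_r)$ and $g_i=f_i/h$, it observes that $N=\Lambda h/I$ is a nonzero submodule of $\Lambda/I$ annihilated by the coprime elements $g_1,\dots,g_r$, hence pseudo-null (its annihilator lies in no principal height-one prime), contradicting $M_o=0$. Both arguments ultimately rest on the same fact, that height-one primes of the UFD $\Lambda$ are principal; the paper's proof is shorter and avoids primary decomposition, while yours is more structural and produces the prime-power factorization of the generator explicitly. One small point common to both: the statement implicitly concerns finite direct sums (equivalently, finitely generated $M$), which you correctly note and which the paper's reduction to a single cyclic summand also tacitly assumes.
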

\begin{proof}
 It suffices to show that the theorem holds when $M$ is a cyclic torsion $\Lambda$-module such that $M_o=0$.
 In this case, $M$ is of the form $\Lambda/I$ for some ideal $I$ of $\Lambda$.
 It remains to show that $I$ is principal.
 Suppose the contrary.
 Let us write $I=(f_1,\ldots ,f_r)$, where $f_i\in\Lambda$ and $r\ge2$.
 Since $\Lambda$ is a unique factorization domain, we can find a greatest common divisor of $f_1,\ldots, f_r$, which we denote by $h$.
 In particular, if we define $g_i=f_i/h$, $i=1,\ldots,r$, the elements $g_1,\ldots,g_r$ are coprime and the ideal $(g_1,\dots, g_r)$ is non-zero.

 Let $N$ be the submodule of $M$ given by $$N:=\Lambda h/I\subset \Lambda/I.$$
 Then $N\simeq\Lambda/(g_1,\ldots, g_r)$ as $\Lambda$-modules.
 We claim that $N$ is pseudo-null, which would then contradict the hypothesis that $M_o=0$.
 Suppose that $N$ is not pseudo-null.
 Then there exists a height-one prime $\fP$ such that $\ann_\Lambda(N)=(g_1,\ldots, g_r)\subseteq \fP$.
 Suppose that $\fP$ is generated by the element $f_0$, then $f_0 \mid g_i$ for all $i$.
 This contradicts the fact that the elements $g_1,\ldots,g_r$ are coprime.
 Hence, the claim on $I$ being principal follows.
\end{proof}

We shall work extensively with $\Lambda$-modules satisfying the semi-cyclicity hypothesis of Theorem~\ref{thm:structure} in the remainder of the article.
We shall say that such modules satisfy \eqref{hyp:SC}:
\vspace{0.2cm}
\begin{itemize}
 \item[(\mylabel{hyp:SC}{\textbf{S-C}})] $M$ is a direct sum of cyclic torsion $\Lambda$-modules.
\end{itemize}

\begin{definition}
Suppose that $M$ satisfies \eqref{hyp:SC} and $M_o=0$.
The \textbf{characteristic ideal} of $M$ is defined to be
\[
\Char_\Lambda(M)=\prod_{i=1}^m I_i,
\]
where $I_i$'s are the principal ideals given by Theorem~\ref{thm:structure}.
\end{definition}

\begin{remark}
We can define the characteristic ideal of any finitely generated torsion $\Lambda$-module, even if it does not satisfy \eqref{hyp:SC} and $M_o=0$; see \cite[Chapitre VII, no.4, Théorème~5]{bourbaki}.
\end{remark}

\subsection{\texorpdfstring{$\Zp$}{}-extensions}\label{sec:zp-extensions}
Let $K_\cyc$ and $K_\ac$ denote the cyclotomic and anti-cyclotomic $\Zp$-extension of $K$, respectively.
We fix two topological generators $\sigma$ and $\tau$ of $G_\infty$ such that
\begin{align*}
\overline{\langle \sigma\rangle}&=\ker\left(G_\infty\longrightarrow\Gal(K_\cyc/K)\right),\\
\overline{\langle \tau\rangle}&=\ker\left(G_\infty\longrightarrow\Gal(K_\ac/K)\right).
\end{align*}
Write $X+1=\sigma$ and $Y+1=\tau$.
Then we may identify $\Lambda$ with the ring of power series $\Zp\llbracket X,Y\rrbracket$.

\begin{lemma}
\label{lem:Zp-ext}
Let $\mathcal{K}/K$ be a $\Zp$-extension.
There exists a unique element $(a:b)\in\PP^1(\Zp)$ such that
\[
\overline{\langle \sigma^a\tau^b\rangle}=\ker\left(G_\infty\rightarrow\Gal(\mathcal{K}/K)\right),
\]
\end{lemma}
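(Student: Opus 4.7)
The plan is to translate the statement into linear algebra over the PID $\Zp$. Identifying $G_\infty \simeq \Zp^2$ additively via $\sigma\mapsto(1,0)$ and $\tau\mapsto(0,1)$, the element $\sigma^a\tau^b$ corresponds to the vector $(a,b)$, and giving a $\Zp$-extension $\mathcal{K}/K$ inside $K_\infty$ is the same data as giving a closed subgroup $H\subset \Zp^2$ whose continuous quotient $\Zp^2/H$ is isomorphic to $\Zp$. Under this dictionary, $\overline{\langle\sigma^a\tau^b\rangle}$ corresponds to the closed $\Zp$-submodule $\Zp\cdot(a,b)$ generated by $(a,b)$.

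First I would record two standard observations: every closed subgroup $H$ of $\Zp^2$ is automatically a $\Zp$-submodule (by continuity of scalar multiplication and density of $\Z$ in $\Zp$), and since $\Zp^2$ is Noetherian over $\Zp$, any such $H$ is finitely generated. The main classification step is then that the closed subgroups with $\Zp^2/H\cong\Zp$ are precisely the rank-one submodules $H=\Zp\cdot(a,b)$ with $(a,b)$ \emph{primitive}, meaning $\min\{v_p(a),v_p(b)\}=0$. Indeed, $\Zp^2/H$ being torsion-free of $\Zp$-rank one forces $H$ to be a free direct summand of rank one over the PID $\Zp$; torsion-freeness of the quotient then forces a generator of $H$ to be primitive. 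Conversely, any primitive vector extends to a $\Zp$-basis of $\Zp^2$, so the corresponding quotient is isomorphic to $\Zp$.

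To conclude, I would observe that two primitive vectors $(a,b)$ and $(a',b')$ span the same rank-one $\Zp$-submodule if and only if they differ by a unit of $\Zp$, which is exactly the equivalence relation defining $\PP^1(\Zp)$. Composing with the bijection between $\Zp$-extensions of $K$ in $K_\infty$ and the admissible subgroups $H$, this gives both existence and uniqueness of the point $(a:b)\in\PP^1(\Zp)$ in the statement.

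The only mild obstacle is the classification of corank-one closed subgroups of $\Zp^2$: one needs to confirm that primitivity of the generator corresponds precisely to the quotient being torsion-free (not merely of rank one). Everything else is linear algebra over $\Zp$, but this point is what ultimately enforces the projective-space description rather than just a $\Zp^2$-worth of generators.
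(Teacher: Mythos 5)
Your proposal is correct and follows essentially the same route as the paper: the paper's proof simply invokes the correspondence between $\Zp$-extensions of $K$ and closed subgroups $H\le G_\infty\simeq\Zp^2$ with $G_\infty/H\simeq\Zp$, and your argument just spells out the underlying classification (such $H$ are exactly the summands $\Zp\cdot(a,b)$ with $(a,b)$ primitive, unique up to $\Zp^\times$, hence parametrized by $\PP^1(\Zp)$) that the paper leaves implicit.
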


\begin{proof}
Each $\Zp$-extension corresponds to a unique subgroup of $H$ of $G_\infty$ such that $G_\infty/H\simeq\Zp$.
Since $G_\infty=\overline{\langle\sigma,\tau\rangle}\simeq\Zp^2$, the lemma follows.
\end{proof}

\begin{notn}
Let $\mathcal{K}/K$ be a $\Zp$-extension.
We adopt the following notation throughout the article.
\begin{itemize}
\item Set $\mathcal{K}=K_{a,b}$, where $(a:b)\in \PP^1(\Zp)$ is given as in Lemma~\ref{lem:Zp-ext}.
\item Set $\Gamma_{a,b}=\Gal(K_{a,b}/K)$, $H_{a,b}=\Gal(K_\infty/K_{a,b})$, and $\Lambda_{a,b}=\Zp\llbracket\Gamma_{a,b}\rrbracket$.
\item Set $\pi_{a,b}:\Lambda\rightarrow \Lambda_{a,b}$ for the map induced by the natural projection map $G_\infty\rightarrow\Gamma_{a,b}$.
\item Set $f_{a,b}=(1+X)^a(1+Y)^b-1$.
\item For $(a:b)=(1:0)$, set $\Gamma_\cyc$, $H_\cyc$, and $\pi_\cyc$ instead of $\Gamma_{(1:0)}$, $H_{(1:0)}$, and $\pi_{(1:0)}$, respectively.
\end{itemize}
\end{notn}
Note that $f_{a,b}$ is an irreducible element of $\Lambda$. Furthermore, if $(a:b)\neq (a':b')$, then $f_{a,b}$ and $f_{a',b'}$ are coprime to each other.

Let $\Lambda_\cyc=\Zp\llbracket \Gamma_\cyc\rrbracket \simeq \Zp\llbracket Y\rrbracket$ and $\Lambda_\ac=\Zp\llbracket \Gamma_\ac\rrbracket \simeq \Zp\llbracket X\rrbracket$.
If $M$ is a finitely generated $\Lambda_\cyc$-module, we write $\Char_{\Lambda_\cyc}(M)$ for its characteristic ideal and write $\ord_Y(M)$ for the multiplicity of $Y$ appearing in $\Char_{\Lambda_\cyc}(M)$.

\section{Mazur's growth number and the cyclotomic Selmer group - the ordinary case}
\label{sec: Cyclotomic - ord case}

The goal of this section is to prove the first half of Theorem~\ref{Thm A}.
Concretely, we prove sufficient conditions for \ref{Mazur-conj} when $\EC$ has good \textit{ordinary} reduction at $p$.

\begin{remark}
\label{clarifying Mazur's conj}
We clarify that in \cite{Maz84}, Mazur predicts that the growth number is 0 when either the $\Zp$-extension is \textit{not} the anti-cyclotomic $\Zp$-extension \textit{or} when the \textit{sign} of $(\EC,\chi_0)$ is +1; here $\chi_0$ is the principal character over $K$.
The notion of `sign' is defined in pp.~191--192 of \emph{op. cit.} and can be different from the sign of the functional equation of $L(\EC/K,s)$ precisely in the `exceptional case'.
What this means is that asking the sign of the functional equation of $L(\EC/K,s)$ to be +1 does not guarantee bounded growth in the exceptional case; but one also needs that the root number of $\EC/\Q$ to be +1.
Note that we write a simpler version of \ref{Mazur-conj} in the introduction because our result assumes \eqref{hyp: GHH}, i.e., we are in the `generic case'.
\end{remark}

Throughout, we impose the following non-anomalous condition:
\vspace{0.2cm}
\begin{itemize}
\item[(\mylabel{hyp:non-a}{\textbf{N-a}})] When $\EC$ has good \textit{ordinary} reduction at $p$, suppose that $p\nmid \#\widetilde{\EC}(\F_v)$.
Equivalently, for every $v\mid p$ in $K$ suppose that $a_v(\EC/K)\not\equiv 1\pmod{p}$.
\end{itemize}
\vspace{0.2cm}

We remark that this is a mild hypothesis to impose; see \cite[Section~9.1]{KLR}.
Note also that this non-anomalous condition ensures that $\EC(K)[p]=0$.

\begin{proposition}[Control Theorem - the ordinary case]
\label{prop:control-ord}
Let $\EC/\Q$ be an elliptic curve and $p$ be an odd prime of good \emph{ordinary} reduction such that \eqref{hyp:non-a} holds.
Let $\cK=K_{a,b}$ for some $(a \colon b) \in \mathbb{P}^1(\Zp)$ and let $H=H_{a,b}$.
The natural restriction map then induces an isomorphism
\[
\Sel(\EC/\cK)\simeq \Sel(\EC/K_\infty)^H.
\]
\end{proposition}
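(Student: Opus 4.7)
The plan is to adapt Mazur's classical control theorem to the present setting, in which $\cK$ is an intermediate $\Zp$-extension inside the $\Zp^2$-extension $K_\infty$. First I would control the $p$-power torsion of $\EC$ over $K_\infty$: the hypothesis \eqref{hyp:non-a} gives $\EC(K)[p]=0$, and since $\Gal(K_\infty/K)\simeq\Zp^2$ is pro-$p$, any nontrivial finite $p$-subgroup of $\EC(K_\infty)[p^\infty]$ would necessarily contain a Galois-fixed element, contradicting $\EC(K)[p]=0$. Hence $\EC(K_\infty)[p^\infty]=0$, so that $H^i(H,\EC(K_\infty)[p^\infty])=0$ for all $i$, and the inflation--restriction sequence gives an isomorphism
\[
h : H^1(G_\Sigma(\cK),\EC[p^\infty]) \xrightarrow{\ \sim\ } H^1(G_\Sigma(K_\infty),\EC[p^\infty])^H.
\]

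Next, I would assemble the defining exact sequences of $\Sel(\EC/\cK)$ and $\Sel(\EC/K_\infty)^H$ into a commutative diagram with exact rows, in which the middle vertical is the isomorphism $h$ and the right vertical is the direct sum $g=\bigoplus_{v\in\Sigma} g_v$ of the natural local restriction maps $g_v : J_v(\EC/\cK)\to J_v(\EC/K_\infty)^H$. A snake-lemma chase then shows that $\Sel(\EC/\cK)\to\Sel(\EC/K_\infty)^H$ is injective (because $\ker h=0$) and that the obstruction to its surjectivity is a subquotient of $\ker g$. It therefore suffices to prove that $\ker g_v=0$ for every $v\in\Sigma$.

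The local analysis splits according to whether $v$ divides $p$. For $v\nmid p$, a further application of inflation--restriction to the pro-cyclic local extension, combined with the finiteness of $\EC(K_{\infty,w})[p^\infty]$ at a prime of residue characteristic different from $p$, forces $\ker g_v=0$ by a standard computation. For $v\mid p$, the formal group filtration
\[
0 \to \widehat{\EC}(\mathfrak{m}_w) \to \EC(\cK_w) \to \widetilde{\EC}(\kappa_w) \to 0
\]
reduces the computation to understanding $\widetilde{\EC}(\kappa_w)[p^\infty]$ and the Kummer description of $H^1(\cK_w,\EC)[p^\infty]$. The non-anomalous hypothesis \eqref{hyp:non-a}, which forces $p\nmid\#\widetilde{\EC}(\F_v)$, is precisely what is needed to kill the contribution of the reduced part and ensures $\ker g_v=0$. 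The main technical obstacle is exactly this local computation at primes above $p$; away from \eqref{hyp:non-a} one would have to contend with nontrivial reduced $p$-torsion and the diagram chase would only give the isomorphism up to a finite (but possibly nonzero) kernel and cokernel. Once all $\ker g_v$ have been shown to vanish, the snake lemma together with the isomorphism $h$ yields the desired $\Sel(\EC/\cK)\simeq\Sel(\EC/K_\infty)^H$.
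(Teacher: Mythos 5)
Your overall skeleton matches the paper's: you form the comparison diagram, apply the snake lemma, and reduce everything to the vanishing of the local kernels; your treatment of the middle vertical map is fine and even a little cleaner than the paper's, since the pro-$p$ fixed-point argument from \eqref{hyp:non-a} does give $\EC(K_\infty)[p^\infty]=0$ and hence kills both $\ker(\beta)$ and $\coker(\beta)$ at once (the paper instead uses $p$-cohomological dimension one of $H$ for the cokernel and the local computation at $p$ for the kernel). The problem is in the local analysis, and most seriously at the primes $v\nmid p$. Your claim that ``finiteness of $\EC(K_{\infty,w})[p^\infty]$ forces $\ker(\gamma_v)=0$ by a standard computation'' fails on both counts: for residue characteristic $\ell\neq p$ the group $\EC(K_{\infty,w})[p^\infty]$ need not be finite (e.g.\ a Tate curve over the unramified $\Zp$-extension when $\ell\equiv 1\pmod p$), and even when it is finite, $H^1$ of a procyclic pro-$p$ group acting on a nontrivial finite $p$-group is never zero (invariants and coinvariants have equal order, and the invariants are nontrivial by the $p$-group fixed-point argument). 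Indeed, the ``standard computation'' at bad primes only yields that such kernels are finite, with size governed by Tamagawa-type factors -- which would give control up to finite error, not the asserted isomorphism. The paper avoids any cohomological computation here by a different observation: for $v\nmid p$ it argues that $K_{\infty,w}=\cK_v$ is the unramified $\Zp$-extension of $K_{v_0}$, so that $\gamma_v$ is literally the identity map. Some argument of this kind (triviality of the local extension $K_{\infty,w}/\cK_v$, or a hypothesis killing the relevant Tamagawa contributions) is needed; as written, this is the step of your proof that breaks.

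At $v\mid p$ your idea is the right one -- \eqref{hyp:non-a} is indeed what kills the reduced contribution -- but the mechanism needs more than the finite-level formal-group filtration and Kummer theory: over the infinite, deeply ramified field $\cK_w$ one needs the Coates--Greenberg description of the local condition, which identifies $\ker(\gamma_v)$ with $H^1(H_w,\widetilde{\EC}(k_w)[p^\infty])$, where $k_w$ is the residue field of $K_\infty$ at $w$. One must then propagate $\widetilde{\EC}(\F_v)[p]=0$ up the tower to $\widetilde{\EC}(k_w)[p]=0$; the paper does this via Mazur's Lemma 5.11 (twice) and an argument of Dion--Ray, though a pro-$p$ fixed-point argument on the residue-field tower, in the spirit of your global step, would also do. So the $p$-adic places are a matter of filling in details along the intended lines, whereas the $v\nmid p$ places require a genuinely different argument from the one you propose.
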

\begin{proof}

Consider the commutative diagram
\[
\begin{tikzcd}
0 \arrow[r] &
\Sel(\EC/\cK) \arrow[r] \arrow[d,"\alpha"] &
H^1(G_{\Sigma}(\cK),\EC[p^\infty]) \arrow[r,"\pi"] \arrow[d,"\beta"] &
\prod_{v \in \Sigma(\cK)} J_v(\EC/\cK) \arrow[d,"\gamma=
\prod \gamma_v"]
\\
0 \arrow[r] &
\Sel(\EC/K_\infty)^{H} \arrow[r] &
H^1(G_{\Sigma}(K_\infty),\EC[p^\infty])^{H} \arrow[r] &
\prod_{v \in \Sigma(K_\infty)} J_v(\EC/K_\infty)^{H}
\end{tikzcd}
\]

The snake lemma gives the exact sequence
\[
0 \longrightarrow \ker(\alpha) \longrightarrow \ker(\beta) \longrightarrow \ker(\gamma) \cap \image(\pi) \longrightarrow \coker(\alpha) \longrightarrow \coker(\beta),
\]
so the result will follow upon showing that $\ker(\beta)$, $\ker(\gamma)\cap \image(\pi)$, and $\coker(\beta)$ are zero.

First, we note that by the inflation-restriction exact sequence, $\coker(\beta)\hookrightarrow H^2(H, \EC(K_\infty)[p^\infty])$.
Since $H$ has $p$-cohomological dimension 1, the second cohomology group is trivial and it follows that $\coker(\beta)=0$.
The inflation-restriction exact sequence also implies
\[
\mathrm{ker}(\beta ) = H^1(H, \EC(K_\infty)[p^\infty]).
\]
Since $v\mid p$ is
deeply ramified in the sense of Coates--Greenberg, imitating the proof of \cite[Proposition~4.8]{CG96}
we obtain that
\[
\ker(\gamma) = \prod \ker(\gamma_v) = \prod_{\substack{v \in \Sigma(\cK)\\ v \nmid p}} H^1(H_{w},\EC(K_{\infty,w})[p^\infty]) \times \prod_{\substack{v \in \Sigma(\cK)\\ v\mid p}} H^1(H_{w},\widetilde{\EC}(k_{w})[p^\infty]),
\]
where $w=w_v$ denotes any place of $K_\infty$ lying above $v$, $H_{w}$ denotes the decomposition group of $w$ inside $H$, $\widetilde\EC$ denotes the reduction of $\EC$ modulo $w$, and $k_{w}$ denotes the residue field of $K_\infty$ at $w$.

Since we have assumed \eqref{hyp:non-a}, for every $v \mid p$,
\[
\widetilde{\EC}(\F_v)[p]=0.
\]
Then by \cite[Lemma 5.11]{mazur72} we have
\[
\EC(K_v)[p]=0.
\]
The argument given in \cite[Lemma 5.2]{dionray} now implies
 \[
\EC(K_{\infty,w})[p]=0
\]
for $w\mid v$.
Applying \cite[Lemma 5.11]{mazur72} once again, we have
\[
\widetilde{\EC}(K_{w})[p]=0.
\]
Thus, for $v \mid p$, we have $\ker(\gamma_v)=0$.

Since $\EC(K_\infty)[p^\infty]$ is contained in $\EC(K_{\infty,w})[p^\infty]$ for any place $w \in \Sigma(\cK)$, and
in particular for $w\mid p$, the above argument also shows that $\ker(\beta)=0$.

It only remains to show that $\gamma_v$ is injective for $v \nmid p$.
In fact, for $v \nmid p$, the local map $\gamma_v$ is simply the identity map: for any $v \in \Sigma(\cK)$ such that $v\nmid p$ and any place $w$ of $K_\infty$ lying above $v$, we have that $K_{\infty,w}=\cK_{v}$ is the unique $\Z_p$-extension of $K_{v_0}$, where $v_0$ is the place of $K$ lying below $v$.
\end{proof}

\begin{proposition}
\label{prop:ordinary-fg-m0}
Let $\EC/\Q$ be an elliptic curve and $p$ be an odd prime of good \emph{ordinary} reduction satisfying \eqref{hyp:non-a}.
Then $\Sel(\EC/K_\infty)^\vee$ (resp. $\Sel(\EC/K_\cyc)^\vee$) is a finitely generated torsion $\Lambda$-module (resp. $\Lambda_\cyc$-module).
Furthermore, if $\Sel(\EC/K_\infty)^\vee$ satisfies \eqref{hyp:SC}, then
\[
\pi_\cyc(\Char_\Lambda(\Sel(\EC/K_\infty)^\vee))=\Char_{\Lambda_\cyc}(\Sel(\EC/K_\cyc)^\vee).
\]
\end{proposition}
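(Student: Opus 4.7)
The plan is to leverage the control theorem of Proposition \ref{prop:control-ord} to reduce questions about the two-variable Selmer group to their cyclotomic analogues. I would begin with the cyclotomic case: applying Proposition \ref{prop:selmer-decomposition-ord} yields $\Sel(\EC/K_\cyc) \cong \Sel(\EC/\Q_\cyc) \oplus \Sel(\EC^{(K)}/\Q_\cyc)$, and Kato's theorem implies that each summand is cofinitely generated and cotorsion over $\Lambda_\cyc$. Hence $\Sel(\EC/K_\cyc)^\vee$ is finitely generated and torsion over $\Lambda_\cyc$.

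Setting $M = \Sel(\EC/K_\infty)^\vee$, I would specialize the control theorem to $\cK = K_\cyc$, $H = H_\cyc$. Since $H_\cyc$ is topologically generated by $\sigma = 1 + X$, passing to Pontryagin duals gives
\[
\Sel(\EC/K_\cyc)^\vee \simeq M / XM.
\]
By the previous step, $M/XM$ is a finitely generated torsion $\Lambda_\cyc$-module. Finite generation of $M$ over $\Lambda$ then follows from topological Nakayama's lemma applied to the compact $\Lambda$-module $M$, since the further quotient $M/(X,Y,p)M$ is finite. For $\Lambda$-torsion, I would localize at the height-one prime $(X) \subset \Lambda$: the module $M_{(X)}$ is finitely generated over the DVR $\Lambda_{(X)}$, and its reduction modulo $X$ agrees with the localization of $M/XM$, which vanishes as $M/XM$ is $\Lambda_\cyc$-torsion. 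Nakayama over $\Lambda_{(X)}$ forces $M_{(X)} = 0$, so $M$ is killed by some element outside $(X)$, and is therefore $\Lambda$-torsion.

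For the final assertion, assuming \eqref{hyp:SC}, I would invoke Theorem \ref{thm:structure} to write $M \simeq \bigoplus_{i=1}^m \Lambda/(f_i)$ for principal ideals $(f_i)$. Reducing modulo $X$ yields
\[
\Sel(\EC/K_\cyc)^\vee \simeq \bigoplus_{i=1}^m \Lambda_\cyc/(\pi_\cyc(f_i)),
\]
and the $\Lambda_\cyc$-torsionness of the left-hand side forces $\pi_\cyc(f_i) \neq 0$ for each $i$ (equivalently, $X \nmid f_i$). Comparing characteristic ideals on both sides then gives
\[
\Char_{\Lambda_\cyc}(\Sel(\EC/K_\cyc)^\vee) = \prod_i (\pi_\cyc(f_i)) = \pi_\cyc\!\left(\prod_i (f_i)\right) = \pi_\cyc(\Char_\Lambda(M)),
\]
as required. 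The main subtlety I foresee is that Theorem \ref{thm:structure} requires the auxiliary hypothesis $M_o = 0$; this will have to be addressed either by strengthening the assumption or by separately verifying that the maximal pseudo-null submodule of $M$ vanishes, which is known in ordinary Iwasawa theory under mild conditions via work in the style of Ochi--Venjakob.
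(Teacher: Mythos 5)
Your proposal is essentially correct and follows the same route as the paper. The cyclotomic step (Proposition~\ref{prop:selmer-decomposition-ord} plus Kato's theorem \cite[Theorem~17.4]{kato04}) is identical, and your descent to the two-variable algebra --- control theorem, topological Nakayama for finite generation, localization at the height-one prime $(X)$ for torsionness --- is a by-hand reconstruction of the Balister--Howson argument \cite{BH97}, which the paper simply cites through \cite[Remark~2.2]{KLR}; the two are the same in substance. The one point you leave open, however, is genuinely needed: Theorem~\ref{thm:structure}, and hence your reduction of the characteristic-ideal identity to computing $M/XM$ for $M\simeq\bigoplus_{i}\Lambda/(f_i)$, requires $M_o=0$. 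If $M$ had a nontrivial pseudo-null submodule, the cyclic summands furnished by \eqref{hyp:SC} need not have principal annihilators, and the asserted equality after applying $\pi_\cyc$ could fail (a pseudo-null piece can contribute nontrivially to the coinvariants). The paper closes this exactly along the lines you anticipate: hypothesis \eqref{hyp:non-a} together with \cite[Proposition~6.1]{KLR}, a Greenberg-style result on the absence of nontrivial pseudo-null submodules of $\Sel(\EC/K_\infty)^\vee$, gives $M_o=0$, so no strengthening of the hypotheses is required --- only this citation. With that ingredient supplied, your argument is complete and matches the paper's proof.
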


\begin{proof}
Let $M=\Sel(\EC/K_\infty)^\vee$ and $M_\cyc=\Sel(\EC/K_\cyc)^\vee$.
Since $\EC$ has good ordinary reduction at $p$, the fact that $M_\cyc$ is a finitely generated torsion $\Lambda_\cyc$-module is due to Kato (see \cite[Theorem~17.4]{kato04}) since by Proposition \ref{prop:selmer-decomposition-ord} we have
\begin{equation}
\label{eq:selmer-decomp}
\Sel(\EC/K_\cyc)\simeq \Sel(\EC/\Q_\cyc)\oplus \Sel(\EC^{(K)}/\Q_\cyc).
\end{equation}
As explained in \cite[Remark~2.2]{KLR}, it now follows from the work of Balister-Howson \cite{BH97} (see also \cite{HO10}) that $\Sel(\EC/K_\infty)^\vee$ is a finitely generated torsion $\Lambda$-module.

Our running hypothesis \eqref{hyp:non-a}, together with \cite[Proposition~6.1]{KLR}, imply that $M_o=0$.
The last assertion of the proposition now follows from Theorem~\ref{thm:structure}.
\end{proof}

\begin{remark}
The proof of the last assertion of Proposition~\ref{prop:ordinary-fg-m0} relies crucially on Hypothesis~\eqref{hyp:SC}.
Let $M=\Sel(\EC/K_\infty)^\vee$ and suppose that the aforementioned hypothesis does not hold.
By \cite[Chapitre VII, no.4, Théorème 5]{bourbaki}, we know that there exist principal ideals $I_1,\dots, I_m$ of $\Lambda$ and a pseudo-null $\Lambda$-module $N$ sitting inside a short exact sequence
\[
0\longrightarrow M\longrightarrow\bigoplus_{i=1}^m\Lambda/I_i\longrightarrow N\longrightarrow 0.
\]
This induces the exact sequence of $\Lambda_\cyc$-modules
\[
H_1(H_\cyc,N)\longrightarrow M_{H_\cyc}\longrightarrow\bigoplus_{i=1}^m\Lambda_\cyc/\pi_\cyc(I_i)\longrightarrow H_0(H_\cyc,N)\longrightarrow 0.
\]
In the case where the $\Lambda_\cyc$-characteristic ideals of $H_i(H_\cyc,N)$ are non-trivial, it is possible that $\Char_{\Lambda_\cyc}(\Sel(\EC/K_\cyc)^\vee)=\Char_{\Lambda_\cyc}(M_{H_\cyc})$ is not equal to $\pi_\cyc\left(\Char_{\Lambda}(\Sel(\EC/K_\infty)^\vee)\right)=\pi_\cyc(\prod_{i=1}^mI_i)$.
\end{remark}

\begin{lemma}
\label{lem:rank-char}
Suppose that $\Sel(\EC/K_\infty)^\vee$ satisfies \eqref{hyp:SC} and that \eqref{hyp:non-a} holds.
Let $\mathcal{K}=K_{a,b}$ be a $\Zp$-extension of $K$ with unbounded Mordell--Weil ranks of $\EC$.
Then $(f_{a,b})\mid \Char_\Lambda\left(\Sel(\EC/K_\infty)^\vee\right)$ as ideals of $\Lambda$.
\end{lemma}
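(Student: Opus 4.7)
The plan is to combine the control theorem (Proposition~\ref{prop:control-ord}) with the structure theorem for semi-cyclic modules (Theorem~\ref{thm:structure}) and to deduce the claim from the fact that unbounded Mordell--Weil rank forces the Selmer group over $\mathcal{K}$ to fail to be $\Lambda_{a,b}$-torsion.

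First, I would unwind the control theorem. Set $M=\Sel(\EC/K_\infty)^\vee$. By Proposition~\ref{prop:control-ord}, restriction gives an isomorphism $\Sel(\EC/\mathcal{K})\simeq \Sel(\EC/K_\infty)^{H_{a,b}}$; Pontryagin dualising, and noting that $H_{a,b}$ is topologically generated by $\sigma^a\tau^b$ whose corresponding augmentation element in $\Lambda$ is $f_{a,b}=(1+X)^a(1+Y)^b-1$, one obtains
\[
\Sel(\EC/\mathcal{K})^\vee\simeq M_{H_{a,b}}\simeq M/f_{a,b}M
\]
as $\Lambda_{a,b}$-modules.

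Next, I would invoke the structure theorem. Since $\EC$ is ordinary and \eqref{hyp:non-a} holds, Proposition~\ref{prop:ordinary-fg-m0} shows that $M$ is a finitely generated torsion $\Lambda$-module with $M_o=0$. Combining this with the hypothesis that $M$ satisfies \eqref{hyp:SC}, Theorem~\ref{thm:structure} yields a decomposition
\[
M\simeq\bigoplus_{i=1}^m \Lambda/(g_i)
\]
for some nonzero $g_i\in\Lambda$, and $\Char_\Lambda(M)=\prod_{i=1}^m(g_i)$. Tensoring with $\Lambda_{a,b}$ (equivalently, reducing modulo $f_{a,b}$) gives
\[
\Sel(\EC/\mathcal{K})^\vee\simeq \bigoplus_{i=1}^m \Lambda_{a,b}/\bigl(\pi_{a,b}(g_i)\bigr).
\]

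Finally, I would translate the hypothesis of unbounded Mordell--Weil rank into a statement about the $\Lambda_{a,b}$-rank of this module. Writing $\mathcal{K}_n$ for the $n$-th layer of $\mathcal{K}/K$, the usual exact sequence
\[
0\longrightarrow \EC(\mathcal{K}_n)\otimes\Qp/\Zp\longrightarrow \Sel(\EC/\mathcal{K}_n)\longrightarrow \Sha(\EC/\mathcal{K}_n)[p^\infty]\longrightarrow 0
\]
implies $\corank_{\Zp}\Sel(\EC/\mathcal{K}_n)\ge \rank\EC(\mathcal{K}_n)$, so the assumption that the latter is unbounded forces $\Sel(\EC/\mathcal{K})=\varinjlim_n\Sel(\EC/\mathcal{K}_n)$ to have infinite $\Zp$-corank. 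Hence $\Sel(\EC/\mathcal{K})^\vee$ is not $\Lambda_{a,b}$-torsion. Looking at the decomposition above, this can happen only if $\pi_{a,b}(g_i)=0$ for some $i$, i.e.\ $f_{a,b}\mid g_i$. Since $(g_i)\mid \Char_\Lambda(M)$, this yields $(f_{a,b})\mid\Char_\Lambda(M)$, as desired.

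The step requiring the most care is the identification $\Sel(\EC/\mathcal{K})^\vee\simeq M/f_{a,b}M$ — in particular, ensuring that the augmentation ideal of $H_{a,b}$ inside $\Lambda$ is indeed generated by $f_{a,b}$ (which follows since $\sigma^a\tau^b-1=(1+X)^a(1+Y)^b-1$) and that Selmer commutes with the direct limit along the layers of $\mathcal{K}$. Everything else is a direct application of the previously established control theorem and structure theorem.
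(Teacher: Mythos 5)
Your argument is correct and is essentially the paper's own proof: both combine the control theorem (Proposition~\ref{prop:control-ord}) with the decomposition from Theorem~\ref{thm:structure} (using \eqref{hyp:non-a} to ensure $M_o=0$), reduce modulo $f_{a,b}$ to get $\bigoplus_i\Lambda_{a,b}/(\pi_{a,b}(g_i))$, and deduce non-torsionness of $\Sel(\EC/K_{a,b})^\vee$ from unbounded Mordell--Weil ranks, forcing $f_{a,b}\mid g_i$ for some $i$. The only cosmetic difference is that the paper passes directly through $(\EC(K_{a,b})\otimes\Qp/\Zp)^\vee$ not being $\Lambda_{a,b}$-torsion, while you argue via the Selmer coranks at finite layers.
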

\begin{proof}
Since we are assuming \eqref{hyp:SC}, Theorem~\ref{thm:structure} tells us that
\[
\Sel(\EC/K_\infty)^\vee\simeq \bigoplus_{i=1}^m \Lambda/(g_i)
\]
for some $g_i \in \Lambda$. By Proposition~\ref{prop:control-ord},
\[
\Sel(\EC/K_\infty)^\vee_{H_{a,b}}\simeq \Sel(\EC/K_{a,b})^\vee\simeq \bigoplus_{i=1}^m \Lambda/(g_i,f_{a,b})\simeq \bigoplus_{i=1}^m \Lambda_{a,b}/(\pi_{a,b}(g_i)).
\]
Therefore, $\Sel(\EC/K_{a,b})^\vee$ is $\Lambda_{a,b}$-torsion if and only if $f_{a,b}\nmid \prod_{i=1}^mg_i$, which is a generator of the characteristic ideal $\Char_\Lambda\left(\Sel(\EC/K_\infty)^\vee\right)$.

If the Mordell--Weil ranks of $\EC$ are unbounded in $K_{a,b}$, then $(\EC(K_{a,b})\otimes\Qp/\Zp)^\vee$ is not $\Lambda_{a,b}$-torsion.
In particular, neither is $\Sel(\EC/K_{a.b})^\vee$, which proves the lemma.
\end{proof}

\begin{remark} In \cite[Lemma~2.2]{KMS}, a similar result is proven without assuming \eqref{hyp:SC}.
\end{remark}

\begin{definition}\label{def:growth-number}
Write $\sM(\EC,K)$ for the set of $(a:b)\in\PP^1(\Zp)$ such that the Mordell--Weil ranks of $\EC$ are unbounded in $K_{a,b}$.
\textbf{Mazur's growth number} is the cardinality of the set $\sM(\EC,K)$ and is denoted by $n(\EC,K)$.
\end{definition}

\begin{remark}
Since $\Sel(\EC/K_\infty)^\vee$ is a finitely generated torsion $\Lambda$-module, it follows from Lemma~\ref{lem:rank-char} that $\sM(\EC,K)$ is finite under \eqref{hyp:SC} (see also \cite[Theorem~1.5]{KMS}).
\end{remark}

\begin{proposition}
\label{prop:ord-count}
With notation introduced above,
\[
n(\EC,K)\le \ord_{Y}\left(\Char_{\Lambda_\cyc}\left(\Sel(\EC/K_\cyc)^\vee\right)\right).
\]
\end{proposition}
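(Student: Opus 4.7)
The plan is to combine three previously established facts: the semi-cyclic structure theorem (Theorem~\ref{thm:structure}), the divisibility of each $f_{a,b}$ into the two-variable characteristic ideal (Lemma~\ref{lem:rank-char}), and the compatibility of characteristic ideals with cyclotomic specialization (Proposition~\ref{prop:ordinary-fg-m0}). The key idea is that each $(a:b) \in \sM(\EC,K)$ contributes an irreducible factor $f_{a,b}$ to $\Char_\Lambda(\Sel(\EC/K_\infty)^\vee)$, and specialization at $X = 0$ turns each such factor into an element of $\Lambda_\cyc$ with $Y$-adic valuation exactly one, so the sought bound follows by taking $\ord_Y$.

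First, under \eqref{hyp:SC}, Theorem~\ref{thm:structure} provides a decomposition $\Sel(\EC/K_\infty)^\vee \simeq \bigoplus_{i=1}^{m} \Lambda/(g_i)$, so that $\Char_\Lambda(\Sel(\EC/K_\infty)^\vee) = \left(\prod_{i=1}^m g_i\right)$. Enumerate $\sM(\EC,K) = \{(a_j : b_j) : 1 \le j \le n\}$ with $n = n(\EC,K)$. Since Proposition~\ref{prop:ordinary-fg-m0} (which invokes Kato's theorem) ensures $\Sel(\EC/K_\cyc)^\vee$ is $\Lambda_\cyc$-torsion, the cyclotomic direction $(1:0)$ does not lie in $\sM(\EC,K)$, so each representative may be chosen with $b_j \ne 0$. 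By Lemma~\ref{lem:rank-char}, each $f_{a_j,b_j}$ divides $\prod_i g_i$ in $\Lambda$. The elements $f_{a_j,b_j}$ are irreducible and pairwise coprime (as noted at the end of Section~\ref{sec:zp-extensions}), and $\Lambda$ is a UFD, so their product divides as well:
\[
\prod_{j=1}^{n} f_{a_j, b_j} \;\Big|\; \prod_{i=1}^{m} g_i \quad\text{in } \Lambda.
\]

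Next, applying the ring surjection $\pi_\cyc \colon \Lambda \to \Lambda_\cyc$, which sends $X \mapsto 0$, preserves this divisibility. Combined with Proposition~\ref{prop:ordinary-fg-m0}, this gives
\[
\prod_{j=1}^{n} \pi_\cyc(f_{a_j, b_j}) \;\Big|\; \pi_\cyc\!\left(\prod_{i=1}^{m} g_i\right) \;=\; \Char_{\Lambda_\cyc}\!\left(\Sel(\EC/K_\cyc)^\vee\right).
\]
A binomial expansion gives $\pi_\cyc(f_{a_j, b_j}) = (1+Y)^{b_j} - 1 = b_j Y + \binom{b_j}{2} Y^2 + \cdots$, whose $Y$-adic valuation in $\Lambda_\cyc = \Zp\llbracket Y\rrbracket$ equals exactly $1$: the constant term is zero, while the coefficient of $Y$ is $b_j \ne 0$. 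Since $\ord_Y$ is additive on products, taking $\ord_Y$ of both sides of the displayed divisibility yields
\[
n \;=\; \sum_{j=1}^{n} \ord_Y\!\left(\pi_\cyc(f_{a_j, b_j})\right) \;\le\; \ord_Y\!\left(\Char_{\Lambda_\cyc}(\Sel(\EC/K_\cyc)^\vee)\right),
\]
which is the desired inequality.

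There is no substantial obstacle to executing this plan; every ingredient has been prepared in the preceding subsections. The only subtlety worth flagging is the last $Y$-adic computation: one must carefully distinguish the $Y$-adic order in $\Zp\llbracket Y\rrbracket$ from the $p$-adic valuation of the power series' coefficients, so that $\ord_Y((1+Y)^{b_j} - 1) = 1$ holds uniformly in $b_j \in \Zp \setminus \{0\}$, regardless of whether $b_j$ is a $p$-adic unit or an element of $p\Zp$. Otherwise, the argument is a clean bookkeeping application of the structure theorem, Lemma~\ref{lem:rank-char}, and the characteristic-ideal specialization property.
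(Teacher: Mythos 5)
Your proposal is correct and follows essentially the same route as the paper's proof: combine Lemma~\ref{lem:rank-char} with the pairwise coprimality of the $f_{a,b}$ to get $\prod_{(a:b)\in\sM(\EC,K)}f_{a,b}\mid\Char_\Lambda(\Sel(\EC/K_\infty)^\vee)$, specialize via $\pi_\cyc$ using Proposition~\ref{prop:ordinary-fg-m0}, and note that $Y$ divides each $\pi_\cyc(f_{a,b})=(1+Y)^b-1$ since $(1:0)\notin\sM(\EC,K)$. Your extra observation that $\ord_Y\bigl((1+Y)^{b}-1\bigr)$ equals exactly $1$ for $b\neq 0$ is correct but not needed; the paper only uses $\ord_Y\ge 1$ for each factor.
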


\begin{proof}
If $(a:b)\ne (a':b')$ are elements of $\PP^1(\Zp)$, then $f_{a,b}$ and $f_{a',b'}$ are coprime elements of $\Lambda$.
It follows from Lemma~\ref{lem:rank-char} that
\begin{equation}
\label{eq:product_divides}
\left.\prod_{(a:b)\in \sM(\EC,K)}f_{a,b}\right\vert\Char_\Lambda\left(\Sel(\EC/K_\infty)^\vee\right).
\end{equation}
Since $(1:0)\notin\sM(\EC,K)$, it follows that
\[
Y \mid \pi_{\cyc}(f_{a,b})=(1+Y)^b-1.
\]
in $\Lambda_\cyc$ for all $(a:b)\in\sM(\EC,K)$.
Hence,
\[
Y^{n(\EC,K)}\left|\prod_{(a:b)\in \sM(\EC,K)}\pi_\cyc(f_{a,b}).\right.
\]
By \eqref{eq:product_divides}, the product on the right-hand side divides
\[\pi_\cyc\left(\Char_\Lambda\left(\Sel(\EC/K_\infty)^\vee\right)\right)=\Char_{\Lambda_\cyc}\left(\Sel(\EC/K_\cyc)^\vee\right),
\]
where the last equality is given by the last assertion of Proposition~\ref{prop:ordinary-fg-m0}.
This concludes the proof.
\end{proof}

Before stating our corollary on \ref{Mazur-conj}, we recall the statement of the modified Heegner hypothesis:
\vspace{0.2cm}
\begin{itemize}
\item[(\mylabel{hyp: GHH}{\textbf{GHH}})] When the sign of the functional equation of $L(\EC/K,s)$ is $-1$, the modified Heegner hypothesis holds with respect to the pair $(K,N_{\EC})$, that is, if $N=N^+ N^-$ such that $N^+$ is the largest factor of $N$ divisible only by primes that are split in $K$, then $N^-$ is the square-free product of an \emph{even} number of primes all of which are inert in $K$.
\end{itemize}
\vspace{0.2cm}

\begin{corollary}
\label{cor:main-result-ord}
Let $\EC/\Q$ be an elliptic curve and $p$ be an odd prime of good \emph{ordinary} reduction such that  \eqref{hyp: GHH} and \eqref{hyp:non-a} hold.
Furthermore, suppose that $\Sel(\EC/K_\infty)^\vee$ satisfies \eqref{hyp:SC}.
If either one of the following two conditions holds
\begin{itemize}
\item $\ord_{Y}\left(\Char_{\Lambda_\cyc}\left(\Sel(\EC/K_\cyc)^\vee\right)\right)=0$,
\item the sign of the functional equation of $L(\EC/K,s)$ is $-1$, $\ord_{Y}\left(\Char_{\Lambda_\cyc}\left(\Sel(\EC/K_\cyc)^\vee\right)\right)=1$, and $\Sha(\EC/K_{\ac,(n)})[p^\infty]$ is finite for all $n\ge0$, where $K_{\ac,(n)}$ is the unique sub-extension of $K_\ac/K$ of degree $p^n$,
\end{itemize}
then \ref{Mazur-conj} holds.
\end{corollary}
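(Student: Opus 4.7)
The plan splits along the two hypotheses in the corollary, and in both sub-cases the starting point is the inequality
\[
n(\EC,K) \le \ord_Y\bigl(\Char_{\Lambda_\cyc}(\Sel(\EC/K_\cyc)^\vee)\bigr)
\]
supplied by Proposition~\ref{prop:ord-count}. In the first case, where $\ord_Y = 0$, this forces $n(\EC,K) = 0$ immediately, so no $\Zp$-extension of $K$ can have unbounded Mordell--Weil rank. This conclusion is strictly stronger than \ref{Mazur-conj} and is independent of the sign of the functional equation, so nothing further is required.

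In the second case, where the sign of $L(\EC/K,s)$ is $-1$, $\ord_Y = 1$, and $\Sha(\EC/K_{\ac,(n)})[p^\infty]$ is finite for every $n \ge 0$, Proposition~\ref{prop:ord-count} only yields the weaker cardinality bound $|\sM(\EC,K)| \le 1$; thus at most one $\Zp$-extension can exhibit unbounded rank. To conclude \ref{Mazur-conj} one must verify that any such exceptional extension is necessarily the anti-cyclotomic one $K_\ac$. The plan is to produce a matching lower bound: under \eqref{hyp: GHH} together with sign $-1$, classical Heegner point machinery on the anti-cyclotomic tower (Cornut--Vatsal, together with its refinements in the good ordinary setting) yields nontorsion Heegner-type classes in $\EC(K_{\ac,(n)})$ at every sufficiently high layer, which forces the $p^\infty$-Selmer corank along $K_\ac$ to be unbounded. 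The hypothesis that $\Sha(\EC/K_{\ac,(n)})[p^\infty]$ is finite at each layer then converts unbounded Selmer growth into unbounded Mordell--Weil rank growth, giving $(0:1) \in \sM(\EC,K)$. Combined with $|\sM(\EC,K)| \le 1$, this pins down $\sM(\EC,K) = \{(0:1)\}$, exactly as \ref{Mazur-conj} predicts in the sign-$(-1)$ situation.

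The main obstacle is this identification step in the second case. Proposition~\ref{prop:ord-count} merely counts exceptional extensions and carries no information about which $\Zp$-extension of $K$ is the offending one; indeed, for every $(a:b)$ with $b \ne 0$ the projection $\pi_\cyc(f_{a,b}) = (1+Y)^b - 1$ is divisible by $Y$ to order exactly one regardless of $a$, so the cyclotomic characteristic ideal alone cannot distinguish $K_\ac$ from any other non-cyclotomic $\Zp$-extension. The Heegner point input -- whose availability is guaranteed by \eqref{hyp: GHH} -- together with the Sha finiteness hypothesis are precisely what break this ambiguity in favor of $K_\ac$, and are therefore the essential ingredients of the second case.
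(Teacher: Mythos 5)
Your proposal is correct and follows essentially the same route as the paper: Proposition~\ref{prop:ord-count} gives $n(\EC,K)\le \ord_Y$, the case $\ord_Y=0$ is immediate, and in the sign $-1$ case one shows $(0:1)\in\sM(\EC,K)$ by combining anticyclotomic Selmer-corank growth under \eqref{hyp: GHH} with the finiteness of $\Sha(\EC/K_{\ac,(n)})[p^\infty]$ to get unbounded Mordell--Weil rank along $K_\ac$, which together with $n(\EC,K)\le 1$ pins down the exceptional extension. The only cosmetic difference is the source of the growth input: the paper invokes the results of Nekov\'a\v{r}, Vatsal and Bertolini (via the appendix of Longo--Vigni) giving $\rank_{\Zp}\Sel(\EC/K_{\ac,(n)})^\vee=p^n+O(1)$, whereas you phrase it through Cornut--Vatsal-style nontriviality of Heegner classes, which is the same machinery.
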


\begin{proof}
If $\ord_{Y}\left(\Char_{\Lambda_\cyc}\Sel(\EC/K_\cyc)^\vee\right)=0$, then Proposition~\ref{prop:ord-count} implies that $n(\EC,K)=0$.
In particular, the Mordell--Weil ranks of $\EC$ are bounded in all $\Zp$-extensions of $K$.
In this case, \ref{Mazur-conj} holds trivially.

If the sign of the functional equation of $L(\EC/K,s)$ is $-1$, Hypothesis \eqref{hyp: GHH} implies that $\Sel(\EC/K_\ac)^\vee$ has rank one over $\Lambda_\ac$.
As explained in \cite[Appendix~A.1]{LongoVigni2}, the works of J.~Nekov\'{a}\u{r} \cite{nekovar} and V.~Vatsal \cite{vatsal} imply that $\rank_{\Zp}\Sel(\EC/K_{\ac,(n)})^\vee=p^n+O(1)$ (this neither depends on the reduction type of $\EC$ at $p$ nor does it require $p>5$); see also \cite[Theorem~A]{Bertolini-Compositio}.

The hypothesis on $\Sha(\EC/K_{\ac,(n)})$ implies that the Mordell--Weil ranks of $\EC$ are unbounded in $K_\ac$.
If in addition $\ord_{Y}\left(\Char_{\Lambda_{\cyc}}\left(\Sel(\EC/K_\cyc)^\vee\right)\right)=1$, then Proposition~\ref{prop:ord-count} asserts that $n(\EC,K)=1$.
Therefore, \ref{Mazur-conj} holds.
\end{proof}

\begin{remark}\label{rk:compare}
The case where $\ord_{Y}\left(\Char_{\Lambda_\cyc}\left(\Sel(\EC/K_\cyc)^\vee\right)\right)=0$ has already been covered by \cite[Theorem~9.3]{KMS}.
In \textit{loc. cit.}, it is assumed that $\Sel(\EC/K)$ is finite, which is equivalent to our condition on the order of vanishing of the characteristic ideal at $Y=0$ (by Mazur's control theorem from \cite{mazur72}).
Under this hypothesis, $\Sel(\EC/K_{a,b})^\vee$ is torsion over $\Lambda_{a,b}$ for all $(a:b)\in\PP^1(\Zp)$.
In particular, the Mordell--Weil ranks of $\EC$ are bounded over $K_{a,b}$.
In particular, \ref{Mazur-conj} holds.
Note that it is not necessary to assume that $\Sel(\EC/K_\infty)^\vee$ satisfies \eqref{hyp:SC} in this case.

In the second case covered by Corollary~\ref{cor:main-result-ord}, the Mordell--Weil of $\EC$ over $K$ is one.
Our result is complementary to \cite[Theorem~9.4]{KMS}, where  \ref{Mazur-conj} has been proved under a different set of hypotheses (namely $\rank_{\Z}\EC(K)=1$, the prime $p$ does not divide the Tamagawa numbers of $\EC$ and the quadratic twist $\EC^{(K)}$, $\Sha(\EC/K)[p^\infty]=0$,  \eqref{hyp:non-a}  and certain conditions on the $p$-adic regulators of $\EC$ and $\EC^{(K)}$ are assumed to hold).
\end{remark}

\section{Mazur's growth number and signed Selmer groups}
\label{sec: Cyclotomic - ss case}
The goal of this section is to prove the second half of Theorem~\ref{Thm A}.
More specifically, we prove sufficient conditions for \ref{Mazur-conj} under the hypothesis that $\EC$ has good \textit{supersingular} reduction at $p$ with $a_p(\EC/\Q)=0$.
Recall that in view of the Hasse bound, this condition on $a_p(\EC/\Q)$ is automatically satisfied when $p\geq 5$.

In this supersingular setting, we assume that $p$ splits in $K$ as $p=\fp \fq$, and we further assume that both these primes above $p$ are totally ramified in $K_\ac/K$.
This condition is satisfied for example, if we assume that $p$ does not divide the class number of $K$.
For $\fr\in\{\fp,\fq\}$, given any sub-extension $\cK$ of $K_\infty/K$, there is exactly one place of $\cK$ lying above $\fr$. We denote this place by the same symbol.

\subsection{Defining signed Selmer groups}
We first recall the definition of plus and minus subgroups given in \cite[\S2.3]{kim14}, which generalize the ones initially defined in \cite{kobayashi03}.

\begin{definition}
Let $m,n\ge0$ be integers.
Set $K_{(m,n)}$ to denote the intersection of $K_\infty$ and the ray class field of $K$ modulo $\fp^{m+1}\fq^{n+1}$.
For $\fr\in\{\fp,\fq\}$, write $K_{(m,n),\fr}$ for the completion of $K_{(m,n)}$ at the unique prime above $\fr$.

Given $m$ and $n$ as above, define
    \begin{align*}
        \EC^\pm(K_{(m,n),\fp})&=\left\{P\in\widehat{\EC}(K_{(m,n),\fp}):\Tr_{K_{(m,n),\fp}/K_{(\ell+1,n),\fp}}(P)\in\widehat{\EC}(K_{(\ell,n),\fp})\, \, \forall \ell\in S_m^\pm\right\},\\
        \EC^\pm(K_{(m,n),\fq})&=\left\{P\in\widehat{\EC}(K_{(m,n),\fq}):\Tr_{K_{(m,n),\fq}/K_{(m,\ell+1),\fq}}(P)\in\widehat{\EC}(K_{(m,\ell),\fq})\, \, \forall \ell\in S_n^\pm\right\},
    \end{align*}
    where
    \begin{align*}
        S_r^+&=\{\ell\in 2\Z:0\le \ell <r\},\\
        S_r^-&=\{\ell\in2\Z+1:1\le \ell<r\},
    \end{align*}
for $r\in\{m,n\}$.

For $\fr\in\{\fp,\fq\}$, let $H^1_\pm(K_{(m,n),\fr},\EC[p^\infty])$ be the image of $\EC^\pm(K_{(m,n),\fr})\otimes\Qp/\Zp$ in $H^1(K_{(m,n),\fr},\EC[p^\infty])$ under the Kummer map.
  \end{definition}
We define four signed Selmer groups:
\begin{definition}\label{def:2-var-Sel}
Given $\bullet,\star\in\{+,-\}$ and integers $m,n\ge0$, define the signed Selmer group
\[
\Sel^\bc(\EC/K_{(m,n)})=\ker\left(\Sel(\EC/K_{(m,n)})\longrightarrow\frac{H^1(K_{(m,n),\fp},\Ep)}{H^1_\bullet(K_{(m,n),\fp},\Ep)}\times \frac{H^1(K_{(m,n),\fq},\Ep)}{H^1_\star(K_{(m,n),\fq},\Ep)}\right).
\]
For $\cK\in\{K_{\infty}\}\cup\{K_{a,b} \mid (a:b)\in\PP^1(\Zp)\}$, define
\[\Sel^\bc(\EC/\cK)=\varinjlim_{K_{(m,n)}\subset \cK }\Sel^\bc(\EC/K_{(m,n)}),\]
where the connecting maps for the direct limit are given by restrictions.
\end{definition}

\begin{remark}\label{rk:same}
For all $\bullet,\star\in\{+,-\}$, we have $\Sel^\bc(\EC/K)=\Sel(\EC/K)$ since $\EC^\pm(K_\fr)=\widehat{E}(K_\fr)$ when $\fr\in\{\fp,\fq\}$.
\end{remark}

\subsection{Bounding Mordell--Weil ranks in terms of signed Selmer groups}
We discuss how we may bound the Mordell--Weil ranks of $\EC$ over $\Zp$-extensions of $K$ via the signed Selmer groups.
\begin{lemma}
\label{lem:bound-rank-pm}
Let $m,n\ge0$ be integers.
Then
\[
\rk_{\Z} \EC(K_{(m,n)})\le \sum_{\bullet,\star\in\{+,-\}}\rank_{\Zp}\Sel^\bc(\EC/K_{(m,n)})^\vee.
\]
\end{lemma}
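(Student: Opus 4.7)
The plan is to transport the Mordell--Weil group into the classical Selmer group via Kummer theory and then distribute the resulting $\Zp$-corank across the four signed local conditions at $\fp$ and $\fq$.

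First I would use the Kummer injection
$$\EC(K_{(m,n)})\otimes_{\Z}\Qp/\Zp\hookrightarrow \Sel(\EC/K_{(m,n)}),$$
whose source has $\Zp$-corank equal to $\rk_{\Z}\EC(K_{(m,n)})$ since $\EC(K_{(m,n)})$ is finitely generated. It therefore suffices to show that the Kummer image is contained in the submodule $\sum_{\bcpm}\Sel^{\bc}(\EC/K_{(m,n)})\subseteq \Sel(\EC/K_{(m,n)})$: granting this, the natural addition map $\bigoplus_{\bc}\Sel^{\bc}\twoheadrightarrow \sum_{\bc}\Sel^{\bc}$ yields the chain
$$\rk_{\Z}\EC(K_{(m,n)})\le \corank_{\Zp}\!\Bigl(\sum_{\bcpm}\Sel^{\bc}\Bigr)\le \sum_{\bcpm}\corank_{\Zp}\Sel^{\bc}=\sum_{\bcpm}\rank_{\Zp}\Sel^{\bc}(\EC/K_{(m,n)})^\vee,$$
using subadditivity of $\Zp$-coranks under surjections in the middle inequality.

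Second, I would establish the containment in the sum by invoking the local decomposition
$$\EC^+(K_{(m,n),\fr})+\EC^-(K_{(m,n),\fr})=\widehat{\EC}(K_{(m,n),\fr})$$
at each $\fr\in\{\fp,\fq\}$, a standard consequence of the Kobayashi--Iovita--Pollack--Kim theory of plus/minus subgroups in the supersingular setting. For any global point $P\in \EC(K_{(m,n)})$, the local image at $\fr$ of the Kummer class of $P$ lies in $\widehat{\EC}(K_{(m,n),\fr})\otimes \Qp/\Zp$, and so admits a splitting into a class in $H^1_+(K_{(m,n),\fr},\EC[p^\infty])$ plus a class in $H^1_-(K_{(m,n),\fr},\EC[p^\infty])$. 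The task then becomes to lift these two local splittings (one at $\fp$ and one at $\fq$) to a single global decomposition, inside $\Sel(\EC/K_{(m,n)})$, of the Kummer class of $P$ as $s^{++}+s^{+-}+s^{-+}+s^{--}$ with $s^{\bc}\in\Sel^{\bc}(\EC/K_{(m,n)})$.

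The main obstacle is precisely this global lifting. I would address it by proving the stronger identity $\Sel(\EC/K_{(m,n)})=\sum_{\bcpm}\Sel^{\bc}(\EC/K_{(m,n)})$ via a diagram chase in the defining exact sequence for $\Sel$: the cokernel of the sum map $\bigoplus_{\bc}\Sel^{\bc}\to\Sel$ is controlled by the local quotients at $\fp$ and $\fq$ obtained by modding out the Kummer image of $\widehat{\EC}$ by $H^1_++H^1_-$, both of which vanish by the plus/minus decomposition displayed above. This is the two-variable analogue of Kim's argument in the one-variable setting, the only new ingredient being the simultaneous bookkeeping of signs at both $\fp$ and $\fq$.
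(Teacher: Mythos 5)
Your overall strategy coincides with the paper's: the proof of Lemma~\ref{lem:bound-rank-pm} also rests on the exact sequence $0\to\EC^+\cap\EC^-\to\EC^+\oplus\EC^-\to\widehat\EC\to0$ at $\fp$ and $\fq$ and on comparing $\bigoplus_{\bcpm}\Sel^\bc(\EC/K_{(m,n)})$ with $\Sel(\EC/K_{(m,n)})$. The gap is in your final step. The vanishing of the two local quotients (Kummer image of $\widehat\EC$ modulo $H^1_++H^1_-$ at $\fp$ and at $\fq$) only says that each \emph{local} class $\loc_\fr(s)$ splits as a plus class plus a minus class; it does not produce \emph{global} classes $s^\bc\in\Sel^\bc$ with $\sum s^\bc=s$, nor even place the Kummer class of a global point inside $\sum_{\bcpm}\Sel^\bc$. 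The cokernel of the sum map $\bigoplus_\bc\Sel^\bc\to\Sel$ is governed not by these local quotients but by the failure of surjectivity of a global-to-local localization map (equivalently, by a Cassels--Poitou--Tate term attached to the intersection conditions $H^1_+\cap H^1_-$), and controlling that requires genuine global duality input. A toy example shows the purely formal chase cannot succeed: if $\ell\colon V\hookrightarrow V_\fp$ is an injection of $\Qp$-vector spaces with $V=\Qp$, $V_\fp=\Qp^2$, $\ell(x)=(x,x)$, $V_\fp^+=\Qp\times 0$ and $V_\fp^-=0\times\Qp$, then $V_\fp^++V_\fp^-=V_\fp$ while $\ell^{-1}(V_\fp^+)+\ell^{-1}(V_\fp^-)=0$; decomposability of the local conditions alone never forces a decomposition upstairs.

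This missing global input is precisely what the paper outsources: its proof cites \cite[Theorem~4.3]{leisprung}, where the displayed short exact sequence is combined with such arguments to show that the cokernel of $\bigoplus_\bc\Sel^\bc(\EC/K_{(m,n)})\to\Sel(\EC/K_{(m,n)})$ is of \emph{finite} cardinality --- not zero, as you assert; your claimed identity $\Sel=\sum_\bc\Sel^\bc$ is both unproved and stronger than needed. Finiteness of the cokernel already gives $\corank_{\Zp}\Sel(\EC/K_{(m,n)})\le\sum_{\bcpm}\corank_{\Zp}\Sel^\bc(\EC/K_{(m,n)})$, which together with your (correct) first reduction via the Kummer injection and subadditivity of coranks yields the lemma. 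To repair the argument, either invoke Lei--Sprung's theorem directly, as the paper does, or reproduce its duality argument in the two-prime setting; the "diagram chase controlled by local quotients" cannot stand on its own.
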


\begin{proof}
This proof is similar to \cite[Theorem~4.3]{leisprung}, which relies on the short exact sequence
\[
0\longrightarrow \EC^+(K_{(m,n),\fr})\cap\EC^-(K_{(m,n),\fr})\longrightarrow
\EC^+(K_{(m,n),\fr})\oplus\EC^-(K_{(m,n),\fr})\longrightarrow \widehat\EC(K_{(m,n),\fr})\longrightarrow 0
\]
for $\fr\in\{\fp,\fq\}$, resulting in a group homomorphism
\[
\bigoplus_{\bullet,\star\in\{+,-\}}\Sel^\bc(\EC/K_{(m,n)})\longrightarrow\Sel(\EC/K_{(m,n)})
\]
whose cokernel is of finite cardinality.
\end{proof}

\begin{remark}
\label{rem: basically what antonio said}
We need the assumption that both primes above $p$ are totally ramified in $K_{\ac/K}$ to appeal to the results in \cite{leisprung}.
Although we believe that this assumption can potentially be relaxed, one would have to rewrite some of the proofs and define the Selmer groups taking into account the splitting of the primes above $p$ in $K_{(m,n)}$.
\end{remark}

\begin{corollary}
\label{cor:ss-bounded-torsion}
Let $(a:b)\in\PP^1(\Zp)$.
Suppose that $\Sel^\bc(\EC/K_{a,b})^\vee$ is $\Lambda_{a,b}$-torsion for all four choices of $\bullet,\star\in\{+,-\}$.
Then the Mordell--Weil ranks of $\EC$ are \emph{bounded} over sub-extensions of $K_{a,b}/K$.

\end{corollary}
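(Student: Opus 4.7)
The plan is to reduce the boundedness of $\rk_\Z \EC(L)$ for finite sub-extensions $L \subseteq K_{a,b}$ to the boundedness of the $\Zp$-coranks of the four signed Selmer groups at $L$, and then to use the torsion hypothesis to control those coranks via Iwasawa-theoretic descent. Since $K_{a,b}/K$ is a $\Zp$-extension, every finite sub-extension is the unique intermediate field $L_n$ of degree $p^n$ over $K$, so it suffices to bound $\rk_\Z \EC(L_n)$ uniformly in $n$.

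First, I would establish an analogue of Lemma~\ref{lem:bound-rank-pm} at the finite layer $L_n$, namely
\[
\rk_\Z \EC(L_n) \le \sum_{\bullet,\star\in\{+,-\}} \rank_{\Zp}\Sel^\bc(\EC/L_n)^\vee.
\]
The argument of Lemma~\ref{lem:bound-rank-pm} carries over verbatim once the signed local subgroups $\EC^\pm(L_{n,\fr})$ for $\fr\in\{\fp,\fq\}$ are defined; the inputs are purely local and rely only on the short exact sequence $0\to \EC^+\cap\EC^- \to \EC^+\oplus \EC^- \to \widehat{\EC} \to 0$ at each prime of $L_n$ above $p$. The standing assumption that $\fp$ and $\fq$ are totally ramified in $K_\ac/K$ guarantees a unique place of $L_n$ above each of $\fp$ and $\fq$, so that the $\EC^\pm$ subgroups from \cite{leisprung} are well-defined at $L_n$.

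Next, I would prove a signed control theorem along $K_{a,b}/K$: the natural restriction map
\[
\Sel^\bc(\EC/L_n) \longrightarrow \Sel^\bc(\EC/K_{a,b})^{\Gamma_n},
\]
where $\Gamma_n=\Gal(K_{a,b}/L_n)$, has kernel and cokernel of $\Zp$-corank bounded independently of $n$. The skeleton of the argument is that of Proposition~\ref{prop:control-ord}; the key additional ingredient is the verification that the plus/minus local conditions at $\fp$ and $\fq$ are preserved, up to controlled error, under restriction along $K_{a,b}/L_n$. This verification is the main obstacle and is of the kind carried out in Appendix~\ref{appendix}.

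Finally, the torsion hypothesis on $\Sel^\bc(\EC/K_{a,b})^\vee$, combined with the structure theorem for finitely generated torsion $\Lambda_{a,b}\cong\Zp\llbracket T\rrbracket$-modules, implies that $\rank_{\Zp}\bigl(\Sel^\bc(\EC/K_{a,b})^\vee\bigr)_{\Gamma_n}$ is bounded above by the (finite) $\lambda$-invariant of $\Sel^\bc(\EC/K_{a,b})^\vee$, uniformly in $n$. Dualising and combining with the signed control theorem bounds $\rank_{\Zp}\Sel^\bc(\EC/L_n)^\vee$ uniformly in $n$, and substituting into the displayed inequality above yields $\sup_n \rk_\Z \EC(L_n) < \infty$, as required.
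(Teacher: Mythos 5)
Your overall strategy coincides with the paper's: bound $\rk_{\Z}\EC(L_n)$ at each finite layer by the sum of the $\Zp$-coranks of the four signed Selmer groups (the analogue of Lemma~\ref{lem:bound-rank-pm}), then use the $\Lambda_{a,b}$-torsion hypothesis to bound those coranks uniformly in $n$. The gap is in your third step. You make the argument hinge on a signed control theorem along $K_{a,b}/K$ at every finite layer, with kernel \emph{and} cokernel of bounded corank, and you do not prove it; you only remark that it is ``of the kind carried out in Appendix~\ref{appendix}''. But the control theorem proved there (Proposition~\ref{prop:signed-control}) compares $\Sel^\bc(\EC/K_{a,b})$ with $\Sel^\bc(\EC/K_\infty)^{H_{a,b}}$; it says nothing about the finite layers $L_n\subset K_{a,b}$, and controlling the cokernel at finite layers would require comparing the plus/minus local conditions at $\fp$ and $\fq$ along an arbitrary direction $(a:b)$ --- exactly the delicate point you flag and leave open. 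As written, the pivotal step of your proof is an unproved assertion.

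The gap is avoidable, because you only ever need an \emph{upper} bound on $\corank_{\Zp}\Sel^\bc(\EC/L_n)$, i.e.\ only the injectivity half of your control statement. Since $\EC(K_\infty)[p^\infty]=0$ by \cite[Lemma~3.4]{lei2021akashi}, inflation--restriction shows that $H^1(G_\Sigma(L_n),\EC[p^\infty])\rightarrow H^1(G_\Sigma(K_{a,b}),\EC[p^\infty])$ is injective, which induces injections $\Sel^\bc(\EC/L_n)\hookrightarrow\Sel^\bc(\EC/K_{a,b})$ for all four choices of $\bullet,\star$. The torsion hypothesis already forces $\Sel^\bc(\EC/K_{a,b})$ to have finite $\Zp$-corank (namely its $\lambda$-invariant), so the finite-level coranks are bounded independently of $n$ with no cokernel control and no passage to $\Gamma_n$-invariants or coinvariants as in your final step. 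This is precisely how the paper argues; with this replacement, your steps 1, 2 and 4 assemble into the paper's proof of the corollary.
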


\begin{proof}
Let $\cK=K_{a,b}$ and for any non-negative integer $n$, write $\cK_n$ for the sub-extension of $\cK/K$ of degree $p^n$.
Since $\EC(\cK)[p^\infty]=0$ by \cite[Lemma~3.4]{lei2021akashi}, it follows that the map $H^1(G_\Sigma(\cK_n),\EC[p^\infty])\rightarrow H^1(G_\Sigma(\cK),\EC[p^\infty])$ is injective by the inflation-restriction sequence.
This in turn induces an injection
\[
\Sel^\bc(\EC/\cK_n)\hookrightarrow  \Sel^\bc(\EC/\cK)
\]
for all four choices of $\bullet,\star\in\{+,-\}$.
In particular, $\rank_{\Zp}\Sel^\bc(\EC/\cK_n)^\vee$ is bounded independently of $n$ under the assumption that $\Sel^\bc(\EC/\cK)^\vee$ is $\Lambda_{a,b}$-torsion.
The 
corollary now follows from Lemma~\ref{lem:bound-rank-pm}.
\end{proof}

\subsection{Mazur's growth number}

We study Mazur's growth number under the assumption that $\Sel(\EC/K)$ is finite.

\begin{proposition}
\label{prop:torsion-finite}
Suppose that $\Sel(\EC/K)$ is finite.
Then $\Sel^\bc(\EC/K_{a,b})^\vee$ is torsion over $\Lambda_{a,b}$ for all $\bullet,\star\in\{+,-\}$ and $(a,b)\in\PP^1(\Zp)$.
\end{proposition}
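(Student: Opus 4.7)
The plan is to derive the desired torsionness from a signed control theorem together with a Nakayama-type argument. By Remark~\ref{rk:same}, $\Sel^\bc(\EC/K)=\Sel(\EC/K)$ is finite by hypothesis, so it will suffice to prove that for every $\bcpm$ and every $(a:b)\in\PP^1(\Zp)$, the natural restriction map
\[
\Sel^\bc(\EC/K) \longrightarrow \Sel^\bc(\EC/K_{a,b})^{\Gamma_{a,b}}
\]
has kernel and cokernel of finite cardinality. Granting this, $\Sel^\bc(\EC/K_{a,b})^{\Gamma_{a,b}}$ is finite, and by Pontryagin duality the $\Gamma_{a,b}$-coinvariants of $\Sel^\bc(\EC/K_{a,b})^\vee$ are likewise finite. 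Since $\Lambda_{a,b}\simeq\Zp\lb T\rb$, a finitely generated $\Lambda_{a,b}$-module whose quotient by the augmentation ideal is finite must be $\Lambda_{a,b}$-torsion, giving the conclusion.

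To establish the control theorem, I would run the diagram chase of Proposition~\ref{prop:control-ord}, replacing the local conditions at the primes above $p$ by the plus/minus conditions of Definition~\ref{def:2-var-Sel}. The global comparison is governed by $H^i(\Gamma_{a,b},\EC(K_{a,b})[p^\infty])$ for $i=1,2$; both are finite because $\EC(K_{a,b})[p^\infty]$ is finite by \cite[Lemma~3.4]{lei2021akashi} (as already used in Corollary~\ref{cor:ss-bounded-torsion}). For primes $v\nmid p$, the local restriction maps have finite kernels and trivial cokernels by a standard inflation-restriction computation and the usual finiteness of the local Galois cohomology involved.

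The genuinely delicate step will be the signed local analysis at $\fp$ and $\fq$: one must show that the kernel and cokernel of
\[
\frac{H^1(K_\fr,\Ep)}{H^1_\bullet(K_\fr,\Ep)} \longrightarrow \left(\frac{H^1(K_{a,b,\fr},\Ep)}{H^1_\bullet(K_{a,b,\fr},\Ep)}\right)^{\Gal(K_{a,b,\fr}/K_\fr)}
\]
are finite for $\fr=\fp$, and analogously with $\star$ at $\fr=\fq$. This is where I expect the main obstacle to lie. The argument would exploit $a_p(\EC/\Q)=0$ together with the total ramification of $\fp,\fq$ in $K_\ac/K$, invoking the signed local framework of \cite{kobayashi03, kim14} supplemented by the explicit calculations carried out in Appendix~\ref{appendix}. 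Uniformity across $(a:b)\in\PP^1(\Zp)$ is essential here, since the local $\Zp$-extension $K_{a,b,\fr}/K_\fr$ genuinely depends on $(a:b)$ and is not always the anti-cyclotomic one treated in \cite{leisprung}. Once these local finiteness assertions are secured, the snake lemma assembles them into the desired control theorem and the proof is complete.
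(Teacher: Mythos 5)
Your overall skeleton is exactly the paper's: reduce to showing that the natural map $\Sel^{\bc}(\EC/K)=\Sel(\EC/K)\to\Sel^{\bc}(\EC/K_{a,b})^{\Gamma_{a,b}}$ has finite kernel and cokernel, conclude that the $\Gamma_{a,b}$-invariants are finite, and then invoke Nakayama to get $\Lambda_{a,b}$-torsionness. However, the step you yourself single out as ``the genuinely delicate step'' --- the behaviour of the signed local conditions at $\fp$ and $\fq$ under restriction from $K_{\fr}$ to the layers of $K_{a,b,\fr}$, uniformly in $(a:b)$ --- is precisely the content of the proposition, and you do not prove it: you only indicate that it ``would exploit'' $a_p=0$, total ramification, the Kobayashi--Kim framework, and the Appendix. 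That is a genuine gap. Note also that pointing to the Appendix does not fill it: the signed control theorem there (Proposition~\ref{prop:signed-control}) compares $K_{a,b}$ with $K_\infty$, not $K$ with $K_{a,b}$, which is the comparison your diagram requires.

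For the record, the missing step is shorter than you anticipate, and your framing makes it look harder than it is. In the snake lemma only the \emph{kernels} of the local maps $\gamma_v^{\bc}$ enter (together with the bijectivity of $\beta$, which follows from $\EC(K_{a,b})[p^\infty]=0$ as you say); no finiteness of local cokernels is needed, so your demand of ``finite kernel and cokernel'' at each place, and ``trivial cokernels'' away from $p$, is unnecessary and partly unjustified. At $v\nmid p$ the finiteness of $\ker\gamma_v^{\bc}$ is \cite[Lemma~3.3]{greenberg99}. At $v\in\{\fp,\fq\}$ the paper observes that, by Remark~\ref{rk:same}, $\EC^{\pm}(K_{\fr})=\widehat\EC(K_{\fr})$, so the signed condition over $K$ is just the ordinary Kummer condition, and the restriction maps $H^1(K_{\fr},\Ep)\to H^1(K_{(m,n),\fr},\Ep)$ and $H^1_{\pm}(K_{\fr},\Ep)\to H^1_{\pm}(K_{(m,n),\fr},\Ep)$ are injective; from this one gets injectivity of $\gamma_v^{\bc}$ at $v\mid p$, with no appeal to the anti-cyclotomic results of \cite{leisprung} and no case analysis in $(a:b)$. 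Until you supply an argument of this kind (or an equivalent one) for the places above $p$, the proposal is an outline of the paper's proof rather than a proof.
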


\begin{proof}
It follows from Remark~\ref{rk:same} that we have the following commutative diagram
\[
\begin{tikzcd}
0 \arrow[r] &
\Sel(\EC/K) \arrow[r] \arrow[d,"\alpha"] &
H^1(G_{\Sigma}(K),\EC[p^\infty]) \arrow[r] \arrow[d,"\beta"] &
\prod_{v \in \Sigma(K)} J_v(\EC/\cK) \arrow[d,"\gamma=
\prod \gamma_v^\bc"]
\\
0 \arrow[r] &
\Sel^\bc(\EC/K_{a,b})^{\Gamma_{a,b}} \arrow[r] &
H^1(G_{\Sigma}(K_{a,b}),\EC[p^\infty])^{\Gamma_{a,b}} \arrow[r] &
\prod_{v \in \Sigma(K_\infty)} J^\bc_v(\EC/K_{a,b})^{\Gamma_{a,b}} ,
\end{tikzcd}
\]
where $J^\bc_v(\EC/K_{a,b})$ is defined by
\[
\begin{cases}
    J_v(\EC/K_{a,b})&v\nmid p,\\
    \displaystyle\varinjlim_{K_{(m,n)}\subset K_{a,b}}\frac{H^1(K_{(m,n),\fp},\Ep)}{H^1_\bullet(K_{(m,n),\fp},\Ep)}&v=\fp,\\
    \displaystyle\varinjlim_{K_{(m,n)}\subset K_{a,b}}\frac{H^1(K_{(m,n),\fq},\Ep)}{H^1_\star(K_{(m,n),\fq},\Ep)}&v=\fq.
\end{cases}
\]
We show that $\alpha$ is injective, with finite cokernel.

The map $\beta$ is an isomorphism as in the proof of Corollary~\ref{cor:ss-bounded-torsion}.
When $v\nmid p$, the map $\gamma^\bc_v$ has finite kernel by \cite[Lemma~3.3]{greenberg99}.
If $\fr\in\{\fp,\fq\}$, we have injections
\begin{align*}
H^1(K_\fr,\EC[p^\infty])\hookrightarrow
H^1(K_{(m,n),\fr},\EC[p^\infty])\\
H^1_\pm(K_\fr,\EC[p^\infty])\hookrightarrow
H^1_\pm(K_{(m,n),\fr},\EC[p^\infty])
\end{align*}
induced by the restriction map.
Hence, $\gamma_v^\bc$ is injective when $v \mid p$.
Therefore, our claim on $\alpha$ follows from the snake lemma.
In particular, $\Sel^\bc(\EC/K_{a,b})^{\Gamma_{a,b}}$ is finite if $\Sel(\EC/K)$ is finite.
This implies that $\Sel^\bc(\EC/K_{a,b})^\vee$ is torsion over $\Lambda_{a,b}$ as desired.
\end{proof}

\begin{corollary}\label{cor:main-result-ss}
Let $\EC/\Q$ be an elliptic curve and $p$ be an odd prime of good \emph{supersingular} reduction with $a_p(\EC/\Q)=0$ such that $\Sel(\EC/K)$ is finite.
Then \ref{Mazur-conj} holds.
\end{corollary}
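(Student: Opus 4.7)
The proof should be essentially immediate by stringing together the two preceding results. The plan is to apply Proposition~\ref{prop:torsion-finite} to deduce torsionness of all four signed Selmer groups over every $\Zp$-extension, then invoke Corollary~\ref{cor:ss-bounded-torsion} to conclude boundedness of Mordell--Weil ranks over each such extension, and finally observe that this bounded growth in \emph{every} $\Zp$-extension trivially implies \ref{Mazur-conj}.

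In more detail, I would argue as follows. Fix an arbitrary $(a:b)\in\PP^1(\Zp)$, so that $K_{a,b}$ is an arbitrary $\Zp$-extension of $K$. By hypothesis $\Sel(\EC/K)$ is finite, so Proposition~\ref{prop:torsion-finite} applies and gives that $\Sel^{\bc}(\EC/K_{a,b})^\vee$ is torsion over $\Lambda_{a,b}$ for all four sign choices $\bullet,\star\in\{+,-\}$. We are also assuming $p$ is an odd prime of good supersingular reduction with $a_p(\EC/\Q)=0$, and (per the standing hypotheses of Section~\ref{sec: Cyclotomic - ss case}) that $p$ splits in $K$ with both primes above $p$ totally ramified in $K_\ac/K$; this is exactly the setting in which Corollary~\ref{cor:ss-bounded-torsion} is formulated. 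Applying it yields that the Mordell--Weil ranks of $\EC$ are bounded along the tower $K_{a,b}/K$.

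Since $(a:b)$ was arbitrary, the Mordell--Weil ranks of $\EC$ stay bounded along \emph{every} $\Zp$-extension of $K$, including the anti-cyclotomic one. In the language of Definition~\ref{def:growth-number}, this says $n(\EC,K)=0$, so in particular \ref{Mazur-conj} holds vacuously: there is no $\Zp$-extension in which the rank grows without bound, so the conjecture is satisfied regardless of the root number.

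There is essentially no obstacle here — all the work has been done in Proposition~\ref{prop:torsion-finite} (where the control-style diagram together with finiteness of $\Sel(\EC/K)$ forces the signed Selmer groups to be torsion) and in Corollary~\ref{cor:ss-bounded-torsion} (where Lemma~\ref{lem:bound-rank-pm} together with the injection of finite layers into $\Sel^{\bc}(\EC/\cK)$ bounds the rank). The only thing worth flagging is that one should make sure the running assumptions of Section~\ref{sec: Cyclotomic - ss case} (namely $p=\fp\fq$ split in $K$ with both primes totally ramified in $K_\ac/K$) are carried along with the hypotheses of the corollary, since these are what permit the use of the signed Selmer machinery in the first place.
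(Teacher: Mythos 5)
Your proof is correct and follows exactly the paper's argument: Proposition~\ref{prop:torsion-finite} gives torsionness of all four signed Selmer groups over every $K_{a,b}$, Corollary~\ref{cor:ss-bounded-torsion} then bounds the Mordell--Weil ranks in every $\Zp$-extension, and \ref{Mazur-conj} follows trivially. Your remark about carrying along the standing hypotheses of Section~\ref{sec: Cyclotomic - ss case} (that $p$ splits in $K$ and both primes above $p$ are totally ramified in $K_\ac/K$) is also consistent with how the paper frames the result.
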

\begin{proof}
By Proposition~\ref{prop:torsion-finite} we know that $\Sel^\bc(\EC/K_{a,b})^\vee$ is torsion over $\Lambda_{a,b}$ for all $\bullet,\star\in\{+,-\}$ and all $(a,b)\in\PP^1(\Zp)$ when $\Sel(\EC/K)$ is finite.
Therefore, by Corollary~\ref{cor:ss-bounded-torsion}, the Mordell--Weil ranks of $\EC$ are bounded over all $\Zp$-extensions of $K$.
In particular, \ref{Mazur-conj} holds.
\end{proof}

\section{Sufficient conditions for the hypothesis \texorpdfstring{\eqref{hyp:SC}}{(S-C)}}
\label{sec:SC hyp}

We record results on Selmer groups that will allow us to apply Corollary~\ref{cor:main-result-ord} to examples in $\S~\ref{sec:examples}$.
In particular, we only consider the good \emph{ordinary} case.
We refer the reader to Appendix~\ref{appendix} for analogous results in the supersingular case.

\begin{proposition}
\label{prop:SC-ord}
Let $\EC/\Q$ be an elliptic curve and let $p$ be an odd prime number where $\EC$ has good ordinary reduction.
Let $K$ be an imaginary quadratic field.
Suppose that $\lambda(\Sel(\EC/K_\cyc)^\vee)\le 1$ and $\mu(\Sel(\EC/K_\cyc)^\vee)=0$, then $\Sel(\EC/K_\infty)^\vee$ satisfies \eqref{hyp:SC}.
\end{proposition}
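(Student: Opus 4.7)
The plan is to show that $M := \Sel(\EC/K_\infty)^\vee$ is itself cyclic as a $\Lambda$-module; combined with the fact that $M$ is $\Lambda$-torsion (Proposition~\ref{prop:ordinary-fg-m0}), this immediately yields that $M$ is a cyclic torsion $\Lambda$-module and so satisfies \eqref{hyp:SC} as a direct sum with a single summand. The key input will be the Pontryagin dual of the control theorem (Proposition~\ref{prop:control-ord}), which identifies $M/XM$ with $M_\cyc := \Sel(\EC/K_\cyc)^\vee$ as $\Lambda_\cyc$-modules. Since $\Lambda$ is a complete Noetherian local ring with maximal ideal $(p,X,Y) \ni X$, topological Nakayama reduces the task to producing a single $\Lambda_\cyc$-generator of $M_\cyc$: any lift of such a generator to $M$ will generate $M$ as a $\Lambda$-module.

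The heart of the argument is therefore showing $M_\cyc$ is cyclic over $\Lambda_\cyc$. The structure theorem for finitely generated torsion $\Lambda_\cyc$-modules, together with $\mu(M_\cyc)=0$ and $\lambda(M_\cyc) \le 1$, yields a pseudo-isomorphism $M_\cyc \sim \Lambda_\cyc/(f)$ where $f$ is either a unit (when $\lambda=0$) or a distinguished polynomial of degree one. To promote this pseudo-isomorphism to a genuine embedding $M_\cyc \hookrightarrow \Lambda_\cyc/(f)$, it suffices to know that $M_\cyc$ contains no non-zero finite $\Lambda_\cyc$-submodule. I would invoke the classical theorem of Greenberg asserting that $\Sel(\EC/\Q_\cyc)^\vee$ has no non-trivial finite $\Lambda_\cyc$-submodule when $\EC$ has good ordinary reduction at $p$, applying it to each summand in the decomposition $\Sel(\EC/K_\cyc) \simeq \Sel(\EC/\Q_\cyc) \oplus \Sel(\EC^{(K)}/\Q_\cyc)$ furnished by Proposition~\ref{prop:selmer-decomposition-ord}. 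Once the embedding is established, $M_\cyc$ is a $\Lambda_\cyc$-stable $\Zp$-submodule of $\Lambda_\cyc/(f) \simeq \Zp$, and any such submodule (being of the form $p^n \cdot \Lambda_\cyc/(f)$) is automatically cyclic over $\Lambda_\cyc$.

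With $M_\cyc$ cyclic, pick a generator $\bar{x}$ and lift to $x \in M$, so that $M = \Lambda x + XM$. Since $M/\Lambda x$ is finitely generated over $\Lambda$ and equal to $X \cdot (M/\Lambda x)$, and since $(X)$ lies in the Jacobson radical of the local ring $\Lambda$, Nakayama's lemma forces $M = \Lambda x$. Combined with $M$ being $\Lambda$-torsion, this gives $M \simeq \Lambda/I$ for a non-zero ideal $I$, manifestly verifying \eqref{hyp:SC}. The main obstacle is the no-finite-submodule statement for $M_\cyc$: this is the only non-formal ingredient, and it rests on Greenberg's theorem together with the running good ordinary hypothesis (and, tacitly, the non-anomalous condition \eqref{hyp:non-a} from Section~\ref{sec: Cyclotomic - ord case}) which ensures the requisite control over the local cohomology at the primes above $p$.
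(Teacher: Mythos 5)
Your proof is correct and follows essentially the same route as the paper: reduce via the control theorem (Proposition~\ref{prop:control-ord}) and Nakayama's lemma to cyclicity of $\Sel(\EC/K_\cyc)^\vee$ over $\Lambda_\cyc$, which is deduced from $\lambda\le 1$, $\mu=0$ and Greenberg's no-finite-index-submodule theorem. The only cosmetic difference is that you invoke Greenberg over $\Q_\cyc$ for each summand of the decomposition in Proposition~\ref{prop:selmer-decomposition-ord} and argue through the structure theorem, whereas the paper applies \cite[Proposition~4.8]{greenberg99} directly over $K_\cyc$ to conclude $\Sel(\EC/K_\cyc)\simeq\Qp/\Zp$ or $0$; your remark that \eqref{hyp:non-a} is tacitly needed (to apply the control theorem) is also accurate.
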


\begin{proof}
It follows from \cite[Proposition 4.8]{greenberg99}  that $\Sel(\EC/K_\cyc)$ has no proper $\Lambda_\cyc$-submodules of finite index.
Thus, if $\lambda(\Sel(\EC/K_\cyc)\le 1$ and $\mu(\Sel(\EC/K_\cyc))=0$, then $\Sel(\EC/K_\cyc) \simeq \Qp/\Zp$ or $0$.
In particular, it is co-cyclic over $\Lambda_\cyc$.
Therefore, it follows from Proposition~\ref{prop:control-ord} and Nakayama's lemma (see \cite{BH97}) that $\Sel(\EC/K_\infty)^\vee$ is cyclic over $\Lambda$.
\end{proof}

\begin{remark}
In view of \cite[Corollary~6.5]{KR2} we expect most of the elliptic curves of rank at most 1 over $K$ to satisfy \eqref{hyp:SC}.
To find the precise proportion, further calculations are required.
\end{remark}

\begin{theorem}
\label{thm: matar-nekovar 4.8}
Let $\EC/\Q$ be an elliptic curve of conductor $N_{\EC}$ and let $K/\Q$ be an imaginary quadratic field.
Let $p$ be an odd prime of good \emph{ordinary} reduction of $\EC$ satisfying \eqref{hyp:non-a} for which $p \nmid N_{\EC}$.
Further suppose that
\begin{itemize}
\item $p$ does not divide the Tamagawa number of $\EC/\Q$.
\item $\rank_{\Z} \EC(K)=1$ and $\Sha(\EC/K)[p^\infty]=0$.
\item the basic Heegner point $y_K\notin \EC(K)_{\tors}$.
\end{itemize}
Then, $\Sha(\EC/K_{\ac})[p^\infty]=0$ and $\Sel(\EC/K_{\ac})^\vee$ is a free module with $\Lambda_{\ac}$-rank equal to 1.
\end{theorem}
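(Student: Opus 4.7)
The plan is to reduce this directly to Theorem 4.8 of \cite{matarnekovar}, so the main task is to verify that each of the standing hypotheses there is satisfied under the setup of this statement. Matar--Nekov\'a\v{r} prove, under a Heegner-type hypothesis on $(K,N_{\EC})$ together with appropriate ordinary, non-anomalous, Tamagawa, and $\Sha$-vanishing conditions, that $\Sel(\EC/K_\ac)^\vee$ is a free $\Lambda_\ac$-module of rank one and that $\Sha(\EC/K_\ac)[p^\infty]=0$. Our job is therefore to match hypotheses.

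First, the assumption $y_K \notin \EC(K)_{\tors}$ presupposes that the basic Heegner point exists, which requires the Heegner hypothesis on $(K,N_{\EC})$; furthermore, by the Gross--Zagier formula together with Kolyvagin's Euler system argument this forces $\rank_{\Z}\EC(K)=1$ and fixes the sign of the functional equation of $L(\EC/K,s)$ to be $-1$. Second, the non-anomalous hypothesis \eqref{hyp:non-a} combined with $p \nmid N_{\EC}$ gives good ordinary reduction with $\EC(K_v)[p]=0$ at every place $v\mid p$ of $K$, which is precisely the local input required on the Iwasawa side. Third, the conditions on the Tamagawa number of $\EC/\Q$ and on $\Sha(\EC/K)[p^\infty]$ are the direct arithmetic inputs of \cite[Theorem~4.8]{matarnekovar}.

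Once the hypotheses are matched, one simply invokes the theorem to conclude that $\Sel(\EC/K_\ac)^\vee$ is free of rank one over $\Lambda_\ac$ and that $\Sha(\EC/K_\ac)[p^\infty]=0$. (As a sanity check on the first part: freeness of rank one is consistent with the Nekov\'a\v{r}--Vatsal growth $\rank_\Z \EC(K_{\ac,(n)}) = p^n + O(1)$, since if $\Sha(\EC/K_\ac)[p^\infty]$ vanishes, the Pontryagin dual of $\EC(K_\ac)\otimes\Qp/\Zp$ is isomorphic to $\Sel(\EC/K_\ac)^\vee$.)

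The main obstacle, such as it is, lies in confirming that no additional side-hypotheses from \cite{matarnekovar}---for instance, residual irreducibility of $\EC[p]$ as a $G_K$-module, or conditions on the splitting behavior of primes dividing $N_{\EC}$ in $K$---are needed beyond those already listed. Under the Heegner hypothesis on $(K,N_\EC)$ and the standing assumptions here, one expects these to hold or to follow from the non-anomalous and Tamagawa hypotheses; the bookkeeping of tracing through the exact statement of \cite[Theorem~4.8]{matarnekovar} is where some care is required, but no new ideas should enter.
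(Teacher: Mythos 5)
Your proposal matches the paper exactly: the paper's entire proof of this theorem is the single citation ``See \cite[Theorem~4.8]{matarnekovar}'', so reducing the statement to that theorem by matching hypotheses (Heegner hypothesis implicit in the existence of $y_K$, ordinarity, \eqref{hyp:non-a}, the Tamagawa and $\Sha$ conditions) is precisely what the authors do. Your closing caveat about tracing the exact hypotheses of \emph{op.\ cit.} is reasonable diligence but introduces no gap, since the statement here is essentially a transcription of that theorem into the paper's notation.
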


\begin{proof}
See \cite[Theorem~4.8]{matarnekovar}.
\end{proof}

\begin{remark}
When $\Sel(\EC/K_{\ac})^\vee$ is a free module with $\Lambda_{\ac}$-rank equal to 1, it is cyclic over $\Lambda_\ac$.
Then $\Sel(\EC/K_{\infty})^\vee$ satisfies \eqref{hyp:SC} (once again by Nakayama's lemma and the control theorem).
\end{remark}

\section{Explicit examples}\label{sec:examples}

In this section, we demonstrate how Corollaries \ref{cor:main-result-ord} and \ref{cor:main-result-ss} can be used to verify \ref{Mazur-conj} in concrete examples.
The Magma \cite{Magma} code which we used to check these examples is available upon request.

We shall identify the Iwasawa algebra $\Zp\llbracket\Gal(\Q_\cyc/\Q)\rrbracket$ with $\Lambda_\cyc$.
Furthermore, we fix a topological generator $\gamma$ of $\Gal(\Q_\cyc/\Q)$ so that $\Zp\llbracket\Gal(\Q_\cyc/\Q)\rrbracket$ may be written as $\Zp\llbracket T\rrbracket$, where $T=\gamma-1$.

\begin{remark}
\label{Lp-prec}
All coefficients of the $p$-adic $L$-functions appearing in this article are computed to Magma's internal precision of $O(p^{33})$ for the constant coefficients and $O(p^4)$ for all others.
In particular, our computations only yield \textit{lower bounds} on the orders of vanishing for $p$-adic $L$-functions.
However, we explain below how we can use these lower bounds to deduce equalities.

\end{remark}

\begin{remark}\label{remark-sha}
Recall that our results rely on the assumption that $\Sha(\EC/K_{\ac,(n)})[p^\infty]$ is finite for all $n\geq0$.
This is a widely-believed conjecture, but it is the one hypothesis which we cannot computationally verify.
For the examples that follow, we assume that this condition holds.
\end{remark}

\subsection{Ordinary reduction}
\label{sec:examples-ord}
Before presenting our examples, we briefly list the sufficient conditions needed to verify the hypotheses of Corollary \ref{cor:main-result-ord}.
If $\EC/\Q$ has good \textit{ordinary} reduction at $p$ and $K$ is a fixed imaginary quadratic field, write $\EC^{(K)}/\Q$ for the twist of $\EC$ by the quadratic character corresponding to $K$ and $L_p(\EC),L_p(\EC^{(K)})\in\Z_p\llbracket T\rrbracket$
for the $p$-adic $L$-functions attached to $\EC/\Q$ and $\EC^{(K)}/\Q$, respectively.
In addition to our running hypothesis on $\Sha$ (see Remark \ref{remark-sha}), it suffices to check the following conditions for the tuple $(\EC,K,p)$:
\begin{enumerate}
\setcounter{enumi}{-1}
\item $a_v(\EC/K)\not\equiv 1\pmod{p}$ for each $v \mid p$ in $K$.
\item $\EC/K$ has root number $-1$.
\item $(K,N_\EC)$ satisfies (\ref{hyp: GHH}).
\item $\lambda\big(L_p(\EC)\big)+\lambda\big(L_p(\EC^{(K)})\big)=1$ and $\mu\big(L_p(\EC)\big)=\mu\big(L_p(\EC^{(K)})\big)=0$.
\item $\ord_TL_p(\EC)+\ord_TL_p(\EC^{(K)})\geq 1$.
\end{enumerate}
By the Iwasawa main conjecture (a theorem in our setting by \cite{SkinnerUrban}), observe that condition~(3) together with the isomorphism
\begin{equation}
\label{eq:selmer-decomp-ord}
\Sel(\EC/K_\cyc) \simeq \Sel(\EC/\Q_\cyc)\oplus \Sel(\EC^{(K)}/\Q_\cyc)
\end{equation}
of Proposition~\ref{prop:selmer-decomposition-ord} allows us to apply Proposition~\ref{prop:SC-ord} and verify hypothesis $\eqref{hyp:SC}$.
Conditions (3) and (4) imply
\[
1\leq \ord_TL_p(\EC)+\ord_TL_p(\EC^{(K)})\leq \lambda\big(L_p(\EC)\big)+\lambda\big(L_p(\EC^{(K)})\big)=1.
\]
Thus $\ord_TL_p(\EC)+\ord_TL_p(\EC^{(K)})=1$.
Now \eqref{eq:selmer-decomp-ord} yields
\[
\ord_Y\left(\Char_{\Lambda_\cyc}\left(\Sel(\EC/K_\cyc)^\vee\right)\right)=\ord_TL_p(\EC)+\ord_TL_p(\EC^{(K)})=1.
\]
 It then follows from Corollary~\ref{cor:main-result-ord} that \ref{Mazur-conj} holds for $(\EC,K,p)$.

\begin{remark}
In view of standard conjectures in number theory, it seems reasonable to expect that conditions $(0)$--$(4)$ are satisfied over infinitely many $K$ for a fixed pair $(\EC,p)$.
It will be interesting to work out the precise proportions in this case and also in the cases when $(p,K)$ is fixed but $\EC$ varies or when $(\EC,K)$ is fixed and $p$ varies.
But this is not the focus of our current investigation.
\end{remark}

\begin{example}[rank 0]
Let $\EC$ be the elliptic curve with Cremona label \href{http://www.lmfdb.org/EllipticCurve/Q/11a1}{\texttt{11a1}}, let $K=\Q(\sqrt{-13})$ and $p=7$.
Note that $\mathrm{rank}_\Z(\EC(\Q))=0$.
We check that $\EC^{(K)}$, which has Cremona label \href{http://www.lmfdb.org/EllipticCurve/Q/29744z2}{\texttt{29744z2}}, has root number $-1$ and that the pair $(K,N_{\EC})$ satisfies (\ref{hyp: GHH}).
Furthermore, $7$ splits in $K$, and $a_7(\EC/\Q)=-2$ with $\EC(K)[7]=0$.
Using Magma, we compute the $7$-adic $L$-functions of $\EC$ and $\EC^{(K)}$ as follows:
\begin{align*}
L_7(\EC)&= -1827287233098838071872685754  + 1188T -120T^2 +882T^3 +991T^4 + 136T^5 + O(T^6), \\
L_7(\EC^{(K)})&=213T -649T^2-1190T^3 -974T^4 + 101T^5 + O(T^6).
\end{align*}
In agreement with what these calculations suggest, the LMFDB \cite{lmfdb} confirms that the $\mu$-invariants of both $L_7(\EC)$ and $L_7(\EC^{(K)})$ vanish and
\[
\lambda\big(L_7(\EC)\big)=0 \quad \text{and} \quad
\lambda\big(L_7(\EC^{(K)})\big) =1.
\]
In light of Remark \ref{Lp-prec}, these computations also show that
\[
0 = \ord_TL_7(\EC)  \quad \text{and} \quad
1 \leq \ord_TL_7(\EC^{(K)}).
\]
Hence conditions (0)--(4) are satisfied.
Thus, \ref{Mazur-conj} holds for the triple $(\texttt{11a1},\Q(\sqrt{-13}),7)$.
\end{example}

\begin{example}[rank 1]
Let $\EC$ be the elliptic curve with Cremona label \href{http://www.lmfdb.org/EllipticCurve/Q/37a1}{\texttt{37a1}}, let $K=\Q(\sqrt{-11})$, let $p=5$, and let $\EC^{(K)}$ be the corresponding twist of $E$, which has Cremona label \href{http://www.lmfdb.org/EllipticCurve/Q/4477a1}{\texttt{4477a1}}.
Note that $\mathrm{rank}_\Z(\EC(\Q))=1$.
Furthermore, $\EC^{(K)}$ has root number $-1$, and the pair $(K,N_{\EC})$ satisfies (\ref{hyp: GHH}).
We have that $5$ splits in $K$, $a_5(\EC)=-2$, and $\EC(K)[7]=0$.
Using the LMFDB, we find that the $\mu$-invariants of $L_5(\EC)$ and $L_5(\EC^{(K)})$ vanish and
\[
\lambda\big(L_5(\EC)\big)=1 \quad \text{and} \quad
\lambda\big(L_5(\EC^{(K)})\big) =0.
\]
Using Magma, we check (as in the previous example) that
\[
1 \leq \ord_TL_5(\EC)  \quad \text{and} \quad
0 = \ord_TL_5(\EC^{(K)}).
\]
Hence conditions (0)--(4) are satisfied.
Thus, \ref{Mazur-conj} holds for the triple $(\texttt{37a1},\Q(\sqrt{-11}),5)$.
\end{example}

\subsection{Supersingular reduction}
\label{sec:examples-ss}

As in the ordinary case, we list sufficient conditions needed to verify the hypotheses of Corollary~\ref{cor:main-result-ss}.
It suffices to check that
\begin{enumerate}
\item $\EC/\Q$ has good reduction at $p$,
\item $a_p(\EC/\Q)=0$,
\item $p$ splits in $K$,
\item the two primes above $p$ are totally ramified in $K_\ac$,  and
\item $\Sel(\EC/K)$ is finite.
\end{enumerate}
To check the last condition, it is enough to have $L(\EC/K,1)\ne0$.
Before explaining this, let us recall a useful decomposition result, which is analogous to Proposition~\ref{prop:selmer-decomposition-ord}.

\begin{proposition}
\label{prop:selmer-decomposition-ss}
Let $\EC/\Q$ be an elliptic curve with good \emph{supersingular} reduction at $p$, and let $\EC^{(K)}$ denote the twist of $E$ corresponding to $K$.
Let $\heartsuit \in \{+,-\}$.
We have the decomposition
\[
\Sel^{\hh}(\EC/K_\cyc) \simeq \Sel^{\heartsuit}(\EC/\Q_\cyc) \oplus \Sel^{\heartsuit}(\EC^{(K)}/\Q_\cyc),
\]
where $\Sel^\heartsuit(\EC/\Q_\cyc)$ and $\Sel^\heartsuit(\EC^{(K)}/\Q_\cyc)$ are Kobayashi's $\pm$-Selmer groups defined in \cite{kobayashi03}.
\end{proposition}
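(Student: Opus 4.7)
The plan is to adapt the argument of Proposition~\ref{prop:selmer-decomposition-ord} to the signed setting, reducing everything to checking that Kim's $\heartsuit$-conditions at $\fp$ and $\fq$ correspond to Kobayashi's $\heartsuit$-condition on each of the two factors appearing in the Shapiro-style decomposition at $p$. More precisely, since $[K:\Q]=2$ is prime to $p$, the $G_{\Q_\cyc}$-representation $\mathrm{Ind}_{G_{K_\cyc}}^{G_{\Q_\cyc}}\EC[p^\infty]$ splits as $\EC[p^\infty]\oplus\EC^{(K)}[p^\infty]$, and Shapiro's lemma yields
\[
H^1(G_\Sigma(K_\cyc),\EC[p^\infty])\simeq H^1(G_\Sigma(\Q_\cyc),\EC[p^\infty])\oplus H^1(G_\Sigma(\Q_\cyc),\EC^{(K)}[p^\infty]),
\]
together with an analogous decomposition of the local cohomology summed over the places of $K_\cyc$ above each $v\in\Sigma$. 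At primes $v\nmid p$ no further work is needed, since the full local $H^1$ is imposed on both sides.

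The core step is the compatibility at $v=p$. Since $p$ splits in $K$ as $\fp\fq$, the quadratic character $\chi_{K/\Q}$ is trivial on $G_{\Q_p}$, so there is an isomorphism of elliptic curves $\EC^{(K)}\simeq\EC$ over $\Q_p$ and hence of $G_{\Q_p}$-modules $\EC^{(K)}[p^\infty]\simeq\EC[p^\infty]$. There is a unique prime $\mathfrak{P}$ of $\Q_\cyc$ above $p$, and exactly two primes of $K_\cyc$ above $\mathfrak{P}$ (the places induced by $\fp$ and $\fq$), both of whose completions identify canonically with $\Q_{p,\cyc}=\Q_{\cyc,\mathfrak{P}}$. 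Under these identifications, the local tower $(K_{(m,n),\fp})_{m\ge 0}$, for any fixed $n$, gives the layers of $\Q_{p,\cyc}/\Q_p$, and similarly $(K_{(m,n),\fq})_{n\ge 0}$ for any fixed $m$. Because $\EC^\pm(K_{(m,n),\fr})$ is defined purely in terms of these local towers and the formal group of $\EC$, it coincides with Kobayashi's $\EC^\pm(\Q_{p,m})$; passing to $H^1$ via the Kummer map then identifies $H^1_\heartsuit(K_{\cyc,\fp},\EC[p^\infty])$ and $H^1_\heartsuit(K_{\cyc,\fq},\EC[p^\infty])$ with the Kobayashi local conditions defining $\Sel^\heartsuit(\EC/\Q_\cyc)$ and $\Sel^\heartsuit(\EC^{(K)}/\Q_\cyc)$, respectively.

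Combining the global and local decompositions with this signed compatibility and taking kernels yields the claimed isomorphism. The only step genuinely beyond the ordinary case is the verification that Kim's two-variable $\pm$-conditions at $\fp$ (respectively $\fq$) depend only on the one-variable $\fp$-adic (respectively $\fq$-adic) tower and hence agree with Kobayashi's classical $\pm$-conditions; this is the main obstacle and is where the hypothesis that $p$ splits in $K$ is used in an essential way, as without it $\EC^{(K)}$ would not be locally isomorphic to $\EC$ at $p$ and the signed subgroups on the two sides could not be matched.
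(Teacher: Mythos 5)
Your argument is correct and is essentially the one the paper relies on: the paper's proof simply cites \cite[proof of Proposition~8.4]{LP}, which proceeds exactly as you do, combining the induction/Shapiro decomposition underlying Proposition~\ref{prop:selmer-decomposition-ord} with the identification, at the split prime $p$, of Kim's local $\pm$-conditions along the cyclotomic line with Kobayashi's conditions for $\EC\simeq\EC^{(K)}$ over $\Q_p$. So you have filled in the details of the cited argument rather than found a new route.
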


\begin{proof}
See the arguments given in \cite[proof of Proposition 8.4]{LP}.
\end{proof}

Now, assuming $L(\EC/K,1)\ne0$, we deduce from the interpolation formulae of Pollack's plus and minus $L$-functions that $L_p^\pm(\EC)$ and $L_p^\pm(\EC^{(K)})$ do not vanish at the trivial character (see \cite[(3.6)]{kobayashi03}).
Then Theorem~\cite[Theorem~4.1]{kobayashi03} together with Proposition~\ref{prop:selmer-decomposition-ss} imply that, up to a power of $p$,
\[
L_p^\heartsuit(\EC)L_p^\heartsuit(\EC^{(K)})\in \Char_{\Lambda_\cyc}\left(\Sel^\hh(\EC/K_\cyc)^\vee\right).
\]
 In particular, $\Sel^\hh(\EC/K_\cyc)^{\Gamma_\cyc}$ is finite.
 As we have seen in the proof of Proposition~\ref{prop:torsion-finite}, there is an injection $\Sel(\EC/K)\hookrightarrow\Sel^\hh(\EC/K_\cyc)^{\Gamma_\cyc}$.
 Therefore, $\Sel(\EC/K)$ is finite.

\begin{example}Let $\EC/\Q$ be the elliptic curve with Cremona label \href{https://www.lmfdb.org/EllipticCurve/Q/14a1/}{\texttt{14a1}}.
Each of the sufficient conditions is easy to check using a computer algebra system like Magma (code available upon request).
In particular, for primes $d < 100$ and supersingular primes $p<100$, we can use Corollary~\ref{cor:main-result-ss} to verify \ref{Mazur-conj} for $(\EC,\Q(\sqrt{-d}),p)$ for each of the following pairs $(p,d)$:
\begin{table}[H]
\begin{tabular}{|l|llll|}
\hline
$p$ &  &      & $d$ &     \\
\hline
5   &  & (19, & 59, & 71) \\
\hline
11  &  & (19, & 73, & 79) \\
\hline
23  &  & (19, & 79, & 83) \\
\hline
71  &  & (23, & 59) &  \\
\hline
\end{tabular}
\end{table}
\end{example}

\begin{example}Let $\EC/\Q$ be the elliptic curve with Cremona label \href{https://www.lmfdb.org/EllipticCurve/Q/30a1/}{\texttt{30a1}}.
For primes $d < 100$ and supersingular primes $p<100$, we can use Corollary~\ref{cor:main-result-ss} to verify \ref{Mazur-conj} for $(\EC,\Q(\sqrt{-d}),p)$ for each of the following pairs $(p,d)$:
\begin{table}[H]
\begin{tabular}{|l|llllllll|}
\hline
$p$ &  &  &      &     & \multicolumn{2}{l}{$d$} &     &     \\
\hline
11  &  &  & (43, & 79) &            &            &     &     \\
\hline
23  &  &  & (11, & 43, & 67,        & 79)        &     &     \\
\hline
47  &  &  & (11, & 23, & 31,        & 43,        & 67) &     \\
\hline
59  &  &  & (11, & 23  & 31,        & 43,        & 47, & 67) \\
\hline
71  &  &  & (11, & 23, & 31,        & 47,        & 59, & 67)\\
\hline
\end{tabular}
\end{table}
\end{example}

\appendix
\section{Signed Selmer Groups in Anti-Cyclotomic Extensions}
\label{appendix}

We investigate properties of signed Selmer groups over the anti-cyclotomic $\Zp$-extension.
As in \S~\ref{sec: Cyclotomic - ss case}, we fix an elliptic curve $\EC/\Q$ which has good supersingular reduction at $p$ with $a_p(\EC/\Q)=0$.
Throughout, we assume that $p$ splits in the imaginary quadratic field $K$ and that both primes above $p$ are totally ramified in $K_\ac/K$.

\subsection{Cotorsionness of signed Selmer groups}

We begin with the following control theorem, which is analogous to Proposition~\ref{prop:control-ord} in the ordinary case.

\begin{proposition}[Control Theorem - the supersingular case]
\label{prop:signed-control}
Let $\EC/\Q$ be an elliptic curve and let $p$ be an odd prime of good \emph{supersingular} reduction with $a_p(\EC/\Q)=0$.
Let $\cK=K_{a,b}$ for some $(a \colon b) \in \mathbb{P}^1(\Zp)$, and let $H=H_{a,b}$.
For $\bullet, \star\in\{+,-\}$, the natural restriction map induces an isomorphism
\[
\Sel^\bc(\EC/\cK)\simeq \Sel^\bc(\EC/K_\infty)^{H}.
\]
\end{proposition}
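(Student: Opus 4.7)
The plan is to mimic the proof of Proposition~\ref{prop:control-ord} in the supersingular setting. First, I would assemble a commutative diagram whose top row is the defining four-term sequence of $\Sel^\bc(\EC/\cK)$ and whose bottom row is the $H$-invariants of the analogous sequence over $K_\infty$, with vertical arrows $\alpha$, $\beta$, and $\gamma=\prod_{v\in\Sigma(\cK)}\gamma_v^\bc$ given by the natural restriction maps. As in the ordinary case, the snake lemma yields an exact sequence
\[
0 \to \ker(\alpha) \to \ker(\beta) \to \ker(\gamma)\cap\image(\pi) \to \coker(\alpha) \to \coker(\beta),
\]
so it suffices to show that $\ker(\beta)$, $\coker(\beta)$, and $\ker(\gamma_v^\bc)$ for every $v\in\Sigma(\cK)$ all vanish.

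Next, I would analyze $\beta$ via inflation--restriction. The key input in the supersingular case is that $\EC(K_\infty)[p^\infty]=0$, which follows from $a_p(\EC/\Q)=0$ by \cite[Lemma~3.4]{lei2021akashi}. Consequently $\ker(\beta)=H^1(H,\EC(K_\infty)[p^\infty])=0$, and since $H\simeq\Zp$ has $p$-cohomological dimension~$1$, we also get $\coker(\beta)\hookrightarrow H^2(H,\EC(K_\infty)[p^\infty])=0$.

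For the local terms, the argument at $v\nmid p$ is identical to its ordinary counterpart: $\cK_v=K_{\infty,v}$ is the unique $\Zp$-extension of $K_{v_0}$, where $v_0$ is the prime of $K$ below $v$, so $\gamma_v^\bc$ is the identity. The main obstacle lies at the primes $v=\fr\in\{\fp,\fq\}$, where one must control the restriction map on the quotients $H^1/H^1_\bullet$. I would deal with this by considering the commutative diagram
\[
\begin{tikzcd}
0 \arrow[r] & H^1_\bullet(\cK_\fr,\Ep) \arrow[r] \arrow[d,"a"] & H^1(\cK_\fr,\Ep) \arrow[r] \arrow[d,"\iota"] & J_v^\bc(\EC/\cK) \arrow[r] \arrow[d,"\gamma_v^\bc"] & 0 \\
0 \arrow[r] & H^1_\bullet(K_{\infty,\fr},\Ep)^{H_w} \arrow[r] & H^1(K_{\infty,\fr},\Ep)^{H_w} \arrow[r] & J_v^\bc(\EC/K_\infty)^{H_w}
\end{tikzcd}
\]
where $H_w$ is the decomposition subgroup at a prime $w$ of $K_\infty$ above $\fr$. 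Inflation--restriction combined with $\EC(K_{\infty,\fr})[p^\infty]=0$ shows that $\iota$ is an isomorphism, so a snake-lemma chase reduces the injectivity of $\gamma_v^\bc$ to the surjectivity of the restriction map $a$.

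The hard part is verifying this surjectivity for the plus/minus subgroups. I would establish it by combining the Kummer-theoretic description of $\EC^\pm$ with the trace-compatibility that cuts out these groups, closely paralleling the local control arguments in \cite{kim14} and \cite{leisprung}. Concretely, writing both sides as direct limits over the layers $K_{(m,n)}\subset\cK$, every $H_w$-fixed signed class over $K_{\infty,\fr}$ should descend to a finite layer $\cK_\fr$, and one checks using the defining conditions on $S_r^\pm$ that the descended class lies in $H^1_\bullet(\cK_\fr,\Ep)$. Granting this, $\gamma_v^\bc$ is injective for every $v\in\Sigma(\cK)$, and the snake-lemma sequence collapses to the desired isomorphism $\Sel^\bc(\EC/\cK)\simeq\Sel^\bc(\EC/K_\infty)^H$.
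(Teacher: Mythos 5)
Your overall architecture is exactly that of the paper: run the same snake-lemma diagram as in Proposition~\ref{prop:control-ord}, use $\EC(K_\infty)[p^\infty]=0$ (from \cite[Lemma~3.4]{lei2021akashi}) to kill $\ker(\beta)$ and $\coker(\beta)$, treat $v\nmid p$ verbatim as in the ordinary case, and isolate the primes $\fr\in\{\fp,\fq\}$ as the only new difficulty. Your snake-lemma reduction at $v\mid p$ — comparing the two short exact sequences defining the quotients and concluding that injectivity of $\gamma_v^\bc$ is equivalent to surjectivity of $a\colon H^1_\bullet(\cK_\fr,\Ep)\to H^1_\bullet(K_{\infty,\fr},\Ep)^{H_w}$ once $\iota$ is an isomorphism — is a correct reformulation of the injectivity of $H^1(\cK_\fr,\Ep)/H^1_\bullet(\cK_\fr,\Ep)\to H^1(K_{\infty,\fr},\Ep)/H^1_\bullet(K_{\infty,\fr},\Ep)$, which is precisely the statement the paper needs. (Two small caveats: for $\iota$ you need the \emph{local} vanishing $\EC(K_{\infty,\fr})[p^\infty]=0$, which is true in the supersingular setting but is not literally the global statement you quoted; and this vanishing is also what you need for both $\ker$ and $\coker$ of $\iota$, since $H_w$ has $p$-cohomological dimension $1$.)

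The genuine gap is that the decisive step is only sketched. The surjectivity of $a$ (equivalently, the vanishing of $H^1(H_w,\,\EC^\bullet(K_{\infty,\fr})\otimes\Qp/\Zp)$, or the injectivity of the quotient map) is the entire non-formal content of the supersingular control theorem beyond the ordinary one: it is \emph{not} a consequence of unwinding the trace conditions defining $S_r^\pm$ layer by layer, but rests on structural input about the plus/minus points as modules over the local Iwasawa algebra (their divisibility/cofreeness), which is what makes $H_w$-invariant signed classes descend. Your phrases ``should descend'' and ``Granting this'' leave exactly this unproved. The paper discharges it by citing the argument of \cite[Proposition~5.8]{leisujatha} (in the spirit of \cite{kim14,kobayashi03}); to complete your write-up you would either invoke that result or reproduce its cohomological vanishing argument, rather than appealing to the defining trace conditions alone.
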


\begin{proof}
Recall from \cite[Lemma~3.4]{lei2021akashi} that $\EC(K_\infty)[p^\infty]=0$.
The proof of the proposition is the same as that of Proposition~\ref{prop:control-ord}, except that, for $v \mid p$, we must show the injectivity of the restriction
\[
\frac{H^1(K_{a,b,\fr},\Ep)}{H^1_{\bullet}(K_{a,b,\fr},\Ep)}\longrightarrow \frac{H^1(K_{\infty,\fr},\Ep)}{H^1_{\star}(K_{\infty,\fr},\Ep)}
\]
for $\fr\in\{\fp,\fq\}$ {and $\bullet,\star \in \{+,-\}$}, which follows from the same argument as the one given in \cite[Proposition 5.8]{leisujatha}.
\end{proof}
This generalizes \cite[Proposition~4.7]{parham} where the case $\cK=K_\cyc$ has been studied.
Proposition~\ref{prop:signed-control} immediately implies the next result.

\begin{corollary}
\label{cor:torsion-control}
Let $\bullet,\star\in\{+,-\}$ and $(a \colon b) \in \mathbb{P}^1(\Zp)$ such that $\Sel^\bc(\EC/K_{a,b})^\vee$ is torsion over $\Lambda_{a,b}$.
Then $\Sel^\bc(\EC/K_\infty)^\vee$
is torsion over $\Lambda$.
\end{corollary}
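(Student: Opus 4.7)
The plan is to deduce this from Proposition~\ref{prop:signed-control} by a standard descent argument. Write $M=\Sel^\bc(\EC/K_\infty)^\vee$ and $N=\Sel^\bc(\EC/K_{a,b})^\vee$, and let $H=H_{a,b}$. The subgroup $H\subset G_\infty$ is topologically generated by $\sigma^a\tau^b$, so the augmentation ideal of $H$ in $\Lambda$ is principal, generated by $f_{a,b}=(1+X)^a(1+Y)^b-1$. Pontryagin dualizing the isomorphism $\Sel^\bc(\EC/K_{a,b})\simeq \Sel^\bc(\EC/K_\infty)^{H}$ of Proposition~\ref{prop:signed-control} therefore yields
\[
M/f_{a,b}M\;\simeq\; N.
\]

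First I will observe that $M$ is finitely generated over $\Lambda$. Indeed, $N$ is finitely generated over $\Lambda_{a,b}$ (being torsion and the dual of a Selmer group in the standard setup), so $M/\mathfrak{m}_\Lambda M$ is a quotient of $N/\mathfrak{m}_{\Lambda_{a,b}} N$ and is finite; the topological Nakayama lemma for compact $\Lambda$-modules then gives that $M$ is finitely generated over $\Lambda$.

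Next I will compare ranks via the structure theorem. By the classification of finitely generated $\Lambda$-modules, there is a pseudo-isomorphism $M\sim \Lambda^r\oplus T$, where $r=\rank_\Lambda M$ and $T$ is a finitely generated torsion $\Lambda$-module. Writing this pseudo-isomorphism as a short exact sequence with pseudo-null kernel and cokernel, tensoring with $\Lambda/(f_{a,b})$ (equivalently, taking $H$-coinvariants) produces a four-term sequence whose outer terms are subquotients of $\Tor_1^\Lambda(\,\cdot\,,\Lambda/(f_{a,b}))$ and $(\,\cdot\,)_H$ applied to pseudo-null $\Lambda$-modules. Since pseudo-null $\Lambda$-modules are necessarily $\Lambda_{a,b}$-torsion after quotienting by $f_{a,b}$ (their supports meet $V(f_{a,b})$ in codimension $\ge 1$ inside $\Spec\Lambda_{a,b}$), we conclude
\[
\rank_{\Lambda_{a,b}}(M/f_{a,b}M)\;\ge\;\rank_{\Lambda_{a,b}}(\Lambda^r/f_{a,b}\Lambda^r)\;=\;r.
\]

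By hypothesis $N=M/f_{a,b}M$ is $\Lambda_{a,b}$-torsion, so the left-hand side vanishes; hence $r=0$ and $M$ is $\Lambda$-torsion, as required. The only substantive point is the rank-inequality above: I expect the main care to be taken in tracking how pseudo-null $\Lambda$-modules behave under the base change $\Lambda\to\Lambda_{a,b}$, using that $f_{a,b}$ is irreducible in $\Lambda$ (in particular a non-zero-divisor) so that $\Tor_1^\Lambda(-,\Lambda/(f_{a,b}))$ is simply the $f_{a,b}$-torsion functor.
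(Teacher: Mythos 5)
Your proposal is correct and takes essentially the same route as the paper, which deduces the corollary directly from the control theorem (Proposition~\ref{prop:signed-control}): dualizing gives $\Sel^\bc(\EC/K_{a,b})^\vee\simeq M/f_{a,b}M$ with $M=\Sel^\bc(\EC/K_\infty)^\vee$, and your Nakayama plus structure-theorem argument for the standard inequality $\rank_{\Lambda_{a,b}}(M/f_{a,b}M)\ge\rank_\Lambda(M)$ simply fills in the routine commutative-algebra details behind the paper's ``immediately implies.''
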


We will need the following result for the cyclotomic $\Zp$-extension of $K$.

\begin{lemma}
\label{lem:ss-cyclo-torsion}
The $\Lambda_\cyc$-modules $\Sel^{++}(\EC/K_\cyc)^\vee$ and $\Sel^{--}(\EC/K_\cyc)^\vee$ are torsion.
If furthermore $\Sel(\EC/K)$ is finite, then $\Sel^{+-}(\EC/K_\cyc)^\vee$ and $\Sel^{-+}(\EC/K_\cyc)^\vee$ are also $\Lambda_\cyc$-torsion.
In this case,
\[
\ord_Y\left(\Char_{\Lambda_\cyc}\left(\Sel^{\bb \star}(\EC/K_\cyc)^\vee\right)\right)=0 \text{ for all }\bb,\star \in \{+,-\}.
\]
\end{lemma}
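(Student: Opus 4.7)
The plan is to split the lemma into three claims and leverage results already proven in the paper. First, I would treat the diagonal signed Selmer groups $\Sel^{\hh}(\EC/K_\cyc)^\vee$ for $\heartsuit\in\{+,-\}$ without any extra hypothesis. By Proposition~\ref{prop:selmer-decomposition-ss}, we have
\[
\Sel^{\hh}(\EC/K_\cyc) \simeq \Sel^\heartsuit(\EC/\Q_\cyc) \oplus \Sel^\heartsuit(\EC^{(K)}/\Q_\cyc).
\]
Both summands are $\Lambda_\cyc$-cotorsion by Kobayashi's theorem in \cite{kobayashi03}, applied to $\EC/\Q$ and to its quadratic twist $\EC^{(K)}/\Q$. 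Kobayashi's hypotheses are met in both cases: $\EC$ has good supersingular reduction at $p$ with $a_p(\EC/\Q)=0$ by assumption, and since $p$ is unramified in $K$, the twist $\EC^{(K)}/\Q$ also has good supersingular reduction at $p$ with $a_p(\EC^{(K)}/\Q)=\chi_K(p)\cdot a_p(\EC/\Q)=0$.

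Next, for the mixed Selmer groups $\Sel^{+-}(\EC/K_\cyc)^\vee$ and $\Sel^{-+}(\EC/K_\cyc)^\vee$ under the additional hypothesis that $\Sel(\EC/K)$ is finite, I would invoke Proposition~\ref{prop:torsion-finite} directly, taking $(a:b)=(1:0)$, so that $K_{a,b}=K_\cyc$, $\Gamma_{a,b}=\Gamma_\cyc$, and $\Lambda_{a,b}=\Lambda_\cyc$. This immediately gives the $\Lambda_\cyc$-torsionness of $\Sel^\bc(\EC/K_\cyc)^\vee$ for all $\bcpm$.

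For the $\ord_Y$-statement, I would extract slightly more from the proof of Proposition~\ref{prop:torsion-finite}. The commutative diagram used there (with $(a:b)=(1:0)$) establishes, via the snake lemma, that the restriction $\Sel(\EC/K)\to\Sel^\bc(\EC/K_\cyc)^{\Gamma_\cyc}$ is injective with finite cokernel; therefore, finiteness of $\Sel(\EC/K)$ forces $\Sel^\bc(\EC/K_\cyc)^{\Gamma_\cyc}$ to be finite for every $\bcpm$. By Pontryagin duality, the finiteness of these $\Gamma_\cyc$-invariants translates into the finiteness of the $Y$-coinvariants $\Sel^\bc(\EC/K_\cyc)^\vee/Y\cdot\Sel^\bc(\EC/K_\cyc)^\vee$. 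Now for any finitely generated torsion $\Lambda_\cyc$-module $M$, the structure theorem over $\Lambda_\cyc=\Zp\lb Y\rb$ gives a pseudo-isomorphism $M\sim\bigoplus_i\Lambda_\cyc/(f_i)$, and $M/YM$ is finite if and only if $f_i(0)\neq0$ for all $i$, i.e., $Y\nmid\prod_i f_i$, which is equivalent to $\ord_Y(\Char_{\Lambda_\cyc}(M))=0$. Applying this to $M=\Sel^\bc(\EC/K_\cyc)^\vee$ finishes the proof.

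The only potential subtlety is verifying that the control-theorem diagram in the proof of Proposition~\ref{prop:torsion-finite} specializes cleanly to $(a:b)=(1:0)$; this is routine since $K_\cyc$ is precisely one of the $\Zp$-extensions $K_{a,b}$ in that setup, and no part of the argument uses a property specific to the anti-cyclotomic direction. I do not anticipate a serious obstacle, since the ingredients---Kobayashi's cotorsionness, Proposition~\ref{prop:torsion-finite}, and the standard dictionary between characteristic ideals and finiteness of coinvariants over a two-dimensional regular local ring restricted to its one-dimensional quotient $\Lambda_\cyc$---are all in place.
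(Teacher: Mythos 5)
Your proposal is correct and takes essentially the same route as the paper: the diagonal cases via the decomposition of Proposition~\ref{prop:selmer-decomposition-ss} together with Kobayashi's cotorsionness theorem, and the mixed cases plus the $\ord_Y$ statement via the injection $\Sel(\EC/K)\hookrightarrow\Sel^\bc(\EC/K_\cyc)^{\Gamma_\cyc}$ with finite cokernel coming from the proof of Proposition~\ref{prop:torsion-finite} specialized to $(a:b)=(1:0)$. The only difference is that you spell out the Pontryagin duality and structure-theorem dictionary (finite $Y$-coinvariants $\Leftrightarrow$ torsion with $\ord_Y=0$), which the paper leaves implicit.
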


\begin{proof}
It follows from \cite[Theorem~1.2]{kobayashi03} and \cite[Proposition~8.4]{LP} that both
$\Sel^{++}(\EC/K_\cyc)^\vee$ and $\Sel^{--}(\EC/K_\cyc)^\vee$ are $\Lambda_\cyc$-torsion.

The rest of the lemma follows from the proof of Proposition~\ref{prop:torsion-finite}, which tells us that there is an injection $\Sel(\EC/K)\rightarrow\Sel^\bc(\EC/K_\cyc)^{\Gamma_\cyc}$ with finite cokernel.
\end{proof}

\begin{remark}\leavevmode
\label{rk:+-torsion}
\begin{enumerate}
\item[i)] We utilize the assumption that $a_p(\EC/\Q)=0$ when we invoke \cite[Theorem~1.2]{kobayashi03} and \cite[Proposition~8.4]{LP}.
\item[ii)] On combining Lemma~\ref{lem:ss-cyclo-torsion} with Corollary~\ref{cor:torsion-control}, we see that $\Sel^{++}(\EC/K_\infty)^\vee$ and $\Sel^{--}(\EC/K_\infty)^\vee$ are \textbf{always} torsion over $\Lambda$.
If in addition $\Sel(\EC/K)$ is finite, then the two mixed signed Selmer groups over $K_\infty$ are also torsion over $\Lambda$.
\end{enumerate}
\end{remark}

\subsection{Pseudo-null submodules and signed Selmer groups}
The main goal of this section is to prove a supersingular analogue of  \cite[Proposition~6.1]{KLR}, regarding $M_o=0$ when $M$ is the Pontryagin dual of a signed Selmer group of $\EC$ over $K_\infty$.

\begin{proposition}
\label{prop:ss-fg-m0}
Let $\EC/\Q$ be an elliptic curve with good \emph{supersingular} reduction at $p$ and $a_p(\EC/\Q)=0$.
For $\heartsuit\in\{+,-\}$, the $\Lambda$-module $\Sel^{\hh}(\EC/K_\infty)^\vee$ admits no non-trivial pseudo-null submodules.
\end{proposition}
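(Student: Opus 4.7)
The plan is to adapt the approach of \cite[Proposition~6.1]{KLR}, which establishes the analogous statement in the ordinary case, by replacing Greenberg's local conditions at $p$ with the signed local conditions from Definition~\ref{def:2-var-Sel}. The three key ingredients are: (i) a weak Leopoldt vanishing $H^2(G_\Sigma(K_\infty),\EC[p^\infty])=0$; (ii) reflexivity of the Pontryagin duals of the local quotients $J_v^{\hh}(\EC/K_\infty)$ for $v\mid p$; and (iii) a Greenberg-type criterion that propagates the no-pseudo-null-submodule property along the defining exact sequence.

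First, I would establish the $H^2$-vanishing. Because both $\fp$ and $\fq$ are totally ramified in $K_\ac/K$ and also ramified in $K_\cyc/K$, the local extensions $K_{\infty,v}/K_v$ at $v\mid p$ are deeply ramified in the sense of Coates--Greenberg, and the standard two-variable weak Leopoldt argument (\`a la Ochi--Venjakob) applies. A formal consequence is that $H^1(G_\Sigma(K_\infty),\EC[p^\infty])^\vee$ has projective dimension at most one over $\Lambda$, and therefore admits no non-trivial pseudo-null submodules.

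Next, I would verify that the local quotient duals are reflexive. For $v\nmid p$ this is essentially immediate from tame ramification considerations. For $v\in\{\fp,\fq\}$, the totally ramified hypothesis enables a two-variable extension of Kobayashi's local computation: $H^1(K_{\infty,v},\EC[p^\infty])^\vee$ is free of rank two, while $H^1_\heartsuit(K_{\infty,v},\EC[p^\infty])^\vee$ is a free direct summand of rank one over the local Iwasawa algebra, whence the quotient dual is free (in particular, reflexive). With these two ingredients in hand, I would dualize the defining sequence
\begin{equation*}
0 \longrightarrow \Sel^{\hh}(\EC/K_\infty) \longrightarrow H^1(G_\Sigma(K_\infty),\EC[p^\infty]) \longrightarrow \bigoplus_{v\in\Sigma(K_\infty)} J_v^{\hh}(\EC/K_\infty)
\end{equation*}
and invoke the standard criterion --- a quotient of a module with no pseudo-null submodule by a reflexive submodule again has no pseudo-null submodule --- to conclude.

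The main obstacle is the second step: while the cyclotomic direction is classical thanks to Kobayashi's work, extending the freeness assertion to the full two-variable local Iwasawa algebra requires carefully tracking the plus/minus local modules across both the cyclotomic and anti-cyclotomic directions. The hypothesis that $\fp$ and $\fq$ are totally ramified in $K_\ac/K$ is essential, since it ensures that the anti-cyclotomic completions behave like totally ramified $\Zp$-extensions of $K_v=\Qp$, which is precisely what is needed for the local duality arguments to transport cleanly from the one-variable setting.
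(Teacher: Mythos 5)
Your overall framework (weak Leopoldt, reflexivity of the local-condition duals, and a Greenberg-type ``quotient by a reflexive submodule'' criterion) is the same structural method underlying the paper's proof, but the paper implements it by directly verifying the hypotheses of \cite[Proposition~4.1.1]{Greenberg16} --- namely \textbf{RFX}, \textbf{LEO}, \textbf{CRK}, \textbf{LOC}$_v^{(1)}$, \textbf{LOC}$_v^{(2)}$ and $\EC(K)[p]=0$ (automatic at a supersingular prime) --- rather than redoing the argument by hand. The genuine gap in your version is at the dualization step. The defining sequence only gives
\[
0 \longrightarrow \Sel^{\hh}(\EC/K_\infty) \longrightarrow H^1(G_\Sigma(K_\infty),\EC[p^\infty]) \longrightarrow C \longrightarrow 0,
\]
where $C$ is the \emph{image} of the localization map; dualizing exhibits $\Sel^{\hh}(\EC/K_\infty)^\vee$ as the quotient of $H^1(G_\Sigma(K_\infty),\EC[p^\infty])^\vee$ by $C^\vee$, and your criterion needs the submodule $C^\vee$ to be reflexive. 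You establish (or assert) reflexivity of $\bigl(\bigoplus_v J_v^{\hh}(\EC/K_\infty)\bigr)^\vee$, but a proper quotient of a reflexive module need not be reflexive, so you must in addition prove surjectivity of the global-to-local map. That surjectivity is not formal: it is precisely what the corank condition \textbf{CRK} encodes, and its proof requires the $\Lambda$-cotorsionness of $\Sel^{\hh}(\EC/K_\infty)^\vee$, which in the paper is deduced from Kobayashi's theorem that $\Sel^{\hh}(\EC/K_\cyc)^\vee$ is $\Lambda_\cyc$-torsion (Lemma~\ref{lem:ss-cyclo-torsion}), the control theorem, and the rank formula of \cite{OV03} combined with a Poitou--Tate argument. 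Your sketch never invokes this global input --- indeed the hypothesis $a_p(\EC/\Q)=0$ enters the paper's proof exactly through it --- so as written the argument cannot be closed.

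Two further points. Your justification of $H^2(G_\Sigma(K_\infty),\EC[p^\infty])=0$ by ``deep ramification'' misattributes what makes weak Leopoldt work: it is a global statement, proved either via $K_\infty\supset K_\cyc$ together with Kato's cyclotomic theorem, or --- as the paper does --- one settles for the weaker assertion that $H^2$ is $\Lambda$-cotorsion (condition \textbf{LEO}), which again rests on the cotorsionness of the signed Selmer group; one also needs $\EC(K_\infty)[p^\infty]=0$ for the projective-dimension claim. On the other hand, the two-variable freeness of $H^1_\heartsuit(K_{\infty,\fr},\EC[p^\infty])^\vee$, which you flag as your main obstacle, is sidestepped in the paper: Greenberg's proposition only requires the $\Lambda$-corank computation for the local terms (quoted from \cite{lei2021akashi}) and the conditions \textbf{LOC}$_v^{(1)}$, \textbf{LOC}$_v^{(2)}$ on $\cT^*$, not a structure theorem for the signed local conditions over the two-variable algebra. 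Your splitting criterion itself (that $\mathrm{Ext}^1_\Lambda$ of a pseudo-null module into a reflexive module vanishes, so the quotient inherits the no-pseudo-null property) is correct; the missing ingredients are the surjectivity/cotorsionness input and a genuine proof of the local statements you assert.
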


\begin{proof}
This proposition is a special case of \cite[Proposition~4.1.1]{Greenberg16} and its proof simplifies considerably as in \cite[proof of Proposition~6.1]{KLR}.
We repeat the proof for the reader's convenience and follow Greenberg's notation as closely as possible.

Set $\cT=T_p(\EC)\otimes \Lambda^\iota$, where as usual $T_p(\EC)$ is the $p$-adic Tate module of $\EC$ and $\iota$ is the involution on $\Lambda$ sending a group-like element to its inverse.
Further, write $\cD=\cT\otimes_{\Lambda} \Lambda^\vee$.

\vspace{0.2cm}

\noindent \textbf{RFX($\cD$):} $\cT$ is a reflexive $\Lambda$-module.

\noindent \emph{Justification:} This holds since it is free module over $\Lambda$.

\vspace{0.2cm}

\noindent \textbf{LEO($\cD$):} The module
\[
\ker\left(H^2(K_{\Sigma}/K,\cD)\longrightarrow \prod_{v\in \Sigma}H^2(K_v,\cD)\right)
\]
is $\Lambda$-cotorsion. 

\noindent \emph{Justification:}
First, we recall from \cite[Theorem~3]{greenberg06} that there is an isomorphism of $\Lambda$-modules $H^2(K_{\Sigma}/K,\cD)\simeq H^2(K_{\Sigma}/K_\infty,\EC[p^\infty])$.

Recall from Lemma~\ref{lem:ss-cyclo-torsion}
that $\Sel^{\hh}(\EC/K_\cyc)^\vee$ is $\Lambda_\cyc$-torsion.
We can refer to the argument in \cite[Remark~2.2]{KLR} to conclude that $\Sel^{\hh}(\EC/K_\infty)^\vee$ is a finitely generated torsion $\Lambda$-module.

Next, we remind the reader that \cite[Theorems~3.2]{OV03} asserts that
\[
\rank_{\Lambda}H^1(K_\Sigma/K_\infty, \EC[p^\infty]) - \rank_{\Lambda}H^2(K_\Sigma/K_\infty, \EC [p^\infty]) = 2.
\]
A standard argument with Poitou--Tate exact sequence then tells us that $H^2(K_{\Sigma}/K_\infty,\EC [p^\infty])$ is $\Lambda$-cotorsion (see for example \cite[Proposition~4.12]{lei2021akashi}), whereas $H^1(K_\Sigma/K_\infty, \EC[p^\infty])$ has corank two.
In particular, \textbf{LEO($\cD$)} holds.

\vspace{0.2cm}

\noindent \textbf{CRK}$(\cD,\cL)$: The following equality holds\footnote{Here, we have rephrased the condition \textbf{CRK}$(\cD,\cL)$ in \cite[\S2.3]{greenberg10} using \cite[Theorem~3]{greenberg06}.}
\begin{align*}
\corank_\Lambda H^1(K_{\Sigma}/K_\infty, \EC[p^\infty])& =\corank_\Lambda \Sel^{\hh}(\EC/K_\infty) + \corank_\Lambda \bigoplus_{\substack{v\mid p\\v\in \Sigma(F_\infty)}}J_v^{\hh}(\EC/K_\infty) \\
& \qquad + \corank_\Lambda \bigoplus_{\substack{v\nmid p\\v\in \Sigma(K_\infty)}}J_v(\EC/K_\infty)
\end{align*}
and both sides equal to $2$.


\noindent \emph{Justification:}
In view of the above discussion, we only need to show that the $\Lambda$-corank of the local cohomology groups is equal to 2.
This assertion is proven in \cite[p.~1012]{lei2021akashi}).

Set $\cT^*=\Hom(\cD,\mu_{p^\infty})$ and as before write $G=\Gal(K_\infty/K)$.

\vspace{0.2cm}

\noindent \textbf{LOC}$_v^{(1)}(\cD)$\textbf{:}
$(\cT^*)^{G_{K_v}}=0$ for each $v\in \Sigma$. \newline
\noindent \textbf{LOC}$_v^{(2)}(\cD)$\textbf{:}
$\cT^*/(\cT^*)^{G_{K_v}}$ is a reflexive $\Lambda$-module.

\noindent \emph{Justification:}
Since $p\ne2$, we have $(\cT^*)^{G_{K_v}}=0$ when $v$ is an archimedean prime.
Furthermore, if $v$ is a non-archimedean prime, it does not split completely in $K_\infty$.
By \cite[Lemma~5.2.2]{greenberg10}, we know that $(\cT^*)^{G_{K_v}}=0$.
As $\cT^*$ is a free $\Lambda$-module, the conditions \textbf{LOC}$_v^{(1)}(\cD)$ and \textbf{LOC}$_v^{(2)}(\cD)$ both hold for all $v\in \Sigma$.

Finally, we note that the condition $\cD[\mathfrak{m}]$ admits no quotient isomorphic to $\mu_p$ for the action of $G_K$ (assumption (b) in \cite[Proposition~4.1.1]{Greenberg16}) is equivalent to $\EC(K)[p]=0$ via the Weil pairing (see the last paragraph on p.~248 of \emph{op. cit.}).
The latter assumption is automatically satisfied since $p$ is a prime of supersingular reduction.
The result is a direct consequence of \cite[Proposition~4.1.1]{Greenberg16}.
\end{proof}

\begin{remark}
\label{rk:mixed-m0}
Let $\bc\in\{+-,-+\}$.
If in addition to the hypothesis of Proposition~\ref{prop:ss-fg-m0}, we also know that $\Sel^\bc(E/K_\infty)^\vee$ is torsion over $\Lambda$ (e.g., when $\Sel(\EC/K)$ is finite as in Lemma~\ref{lem:ss-cyclo-torsion}), the proof of Proposition~\ref{prop:ss-fg-m0} goes through verbatim and we can conclude that  $\Sel^{\bb \star}(\EC/K_\infty)^\vee$ do not admit any non-trivial pseudo-null submodules.
\end{remark}

\begin{remark}
The (non)existence of pseudonull submodules for $\Sel^{\hh}(\EC/K_\infty)^\vee$ has also been studied (under slightly different hypotheses) in \cite[Theorem~5.6]{parham} via a different method.
\end{remark}

\begin{corollary}
\label{cor:project-ss}
Let $\EC/\Q$ be an elliptic curve with good \emph{supersingular} reduction at $p$ satisfying $a_p(\EC/\Q)=0$.
Let $\bullet,\star\in\{+,-\}$ and $(a \colon b) \in \mathbb{P}^1(\Zp)$ such that $\Sel^\bc(\EC/K_\infty)^\vee$ satisfies \eqref{hyp:SC} and that $\Sel^\bc(\EC/K_{a,b})^\vee$ is torsion over $\Lambda_{a,b}$, then the following equality holds:
\[
\pi_{a,b}(\Char_\Lambda(\Sel^{\bb \star}(\EC/K_\infty)^\vee))=\Char_{\Lambda_{a,b}}(\Sel^{\bb \star}(\EC/K_{a,b})^\vee).
\]
\end{corollary}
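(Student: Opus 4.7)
The plan is to follow the exact template of the last assertion of Proposition~\ref{prop:ordinary-fg-m0}, substituting the supersingular inputs we have just established. The two key ingredients are the structure result in Theorem~\ref{thm:structure} (which requires both \eqref{hyp:SC} and the vanishing of the maximal pseudo-null submodule) and the signed control theorem, Proposition~\ref{prop:signed-control}.

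First, I would verify that $\Sel^\bc(\EC/K_\infty)^\vee$ has no non-trivial pseudo-null submodules. For $\bc \in \{++,--\}$ this is exactly Proposition~\ref{prop:ss-fg-m0}. For the mixed signs $\bc \in \{+-,-+\}$, the torsion hypothesis on $\Sel^\bc(\EC/K_{a,b})^\vee$ together with Corollary~\ref{cor:torsion-control} (applied to the $H_{a,b}$-coinvariants) gives that $\Sel^\bc(\EC/K_\infty)^\vee$ is $\Lambda$-torsion, which is the remaining hypothesis needed to invoke Remark~\ref{rk:mixed-m0}. With \eqref{hyp:SC} and $M_o = 0$ in hand, Theorem~\ref{thm:structure} yields a decomposition
\[
\Sel^\bc(\EC/K_\infty)^\vee \simeq \bigoplus_{i=1}^m \Lambda/(g_i)
\]
for some $g_i \in \Lambda$, with $\Char_\Lambda\!\bigl(\Sel^\bc(\EC/K_\infty)^\vee\bigr) = \bigl(\prod_i g_i\bigr)$.

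Next, I would take $H_{a,b}$-coinvariants. Applying the control theorem (Proposition~\ref{prop:signed-control}) and dualizing gives
\[
\Sel^\bc(\EC/K_{a,b})^\vee \simeq \bigl(\Sel^\bc(\EC/K_\infty)^\vee\bigr)_{H_{a,b}} \simeq \bigoplus_{i=1}^m \Lambda_{a,b}/(\pi_{a,b}(g_i)).
\]
The torsion hypothesis on $\Sel^\bc(\EC/K_{a,b})^\vee$ ensures that $\pi_{a,b}(g_i) \neq 0$ in $\Lambda_{a,b}$ for each $i$: otherwise the corresponding summand would contribute a free $\Lambda_{a,b}$-summand, contradicting the torsion assumption. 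Consequently, each $\Lambda_{a,b}/(\pi_{a,b}(g_i))$ is a cyclic torsion $\Lambda_{a,b}$-module, and reading off characteristic ideals in $\Lambda_{a,b}$ gives
\[
\Char_{\Lambda_{a,b}}\!\bigl(\Sel^\bc(\EC/K_{a,b})^\vee\bigr) = \prod_{i=1}^m \bigl(\pi_{a,b}(g_i)\bigr) = \pi_{a,b}\!\Bigl(\prod_{i=1}^m (g_i)\Bigr) = \pi_{a,b}\!\bigl(\Char_\Lambda(\Sel^\bc(\EC/K_\infty)^\vee)\bigr),
\]
which is the claimed equality.

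There is no serious obstacle here: once Proposition~\ref{prop:ss-fg-m0} and Remark~\ref{rk:mixed-m0} are available, the argument is purely formal and parallels the ordinary case. The only mild subtlety is handling the mixed signs, where one must invoke the torsion hypothesis on $\Sel^\bc(\EC/K_{a,b})^\vee$ twice — once (via Corollary~\ref{cor:torsion-control}) to feed into Remark~\ref{rk:mixed-m0}, and once to rule out $\pi_{a,b}(g_i) = 0$ after descent.
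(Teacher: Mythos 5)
Your proposal is correct and follows essentially the same route as the paper: the paper's proof likewise deduces $\Lambda$-torsionness of $\Sel^{\bc}(\EC/K_\infty)^\vee$ from the $\Lambda_{a,b}$-torsion hypothesis (via Corollary~\ref{cor:torsion-control}, phrased through Remark~\ref{rk:+-torsion}), gets $M_o=0$ from Proposition~\ref{prop:ss-fg-m0} and Remark~\ref{rk:mixed-m0}, and then concludes by Theorem~\ref{thm:structure} and the control theorem (Proposition~\ref{prop:signed-control}), exactly as in the ordinary-case template you cite. Your explicit descent computation, including the observation that $\pi_{a,b}(g_i)\neq 0$, simply spells out details the paper leaves implicit.
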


\begin{proof}
By Remark~\ref{rk:+-torsion} the assumption on the $\Lambda_{a,b}$-torsionness of $\Sel^{\bb\star}(\EC/K_{a,b})^\vee$ implies that the $\Lambda$-module $M:= \Sel^{\bb \star}(\EC/K_\infty)^\vee$ is torsion.
Furthermore,  Proposition~\ref{prop:ss-fg-m0} and Remark~\ref{rk:mixed-m0} tell us that $M_o=0$.
Therefore, the asserted equality follows from Theorem~\ref{thm:structure} and Proposition~\ref{prop:signed-control}.
\end{proof}

\begin{definition}
For $\bullet, \star \in\{+,-\}$, define $\sT(\EC,K)^{\bullet \star}$ to be the set of $(a:b)\in\PP^1(\Zp)$ such that $\Sel^{\bb \star}(\EC/K_{a,b})^\vee$ is \emph{not} a torsion $\Zp\llbracket\Gamma_{a,b}\rrbracket$-module and define $n(\EC,K)^{\bullet \star} = \#\sT(\EC,K)^{\bullet \star}$.

Finally, set
\[
\sT(\EC,K)= \bigcup_{\bb,\star \in \{+,-\}}\sT(\EC,K)^{\bullet \star}.
\]
\end{definition}

In view of the above definition, Definition~\ref{def:growth-number}, and Corollary \ref{cor:ss-bounded-torsion} we can now conclude that $\sM(\EC,K) \subset \sT(\EC,K)$ and
\[
n(\EC,K) \leq \# \sT(\EC,K) \leq  \sum_{\bb,\star \in \{+,-\}} n(\EC,K)^{\bullet \star}.
\]

\begin{proposition}
\label{prop:count-ss}
For $\bullet, \star\in\{+,-\}$, if $\Sel^{\bb\star}(\EC/K_\cyc)^\vee$ is torsion over $\Lambda_\cyc$ and $\Sel^{\bb\star}(\EC/K_\infty)^\vee$ satisfies \eqref{hyp:SC},  the following inequality holds
\[
n(\EC,K)^{\bullet\star}\le\ord_Y\left(\Char_{\Lambda_\cyc}\left(\Sel^{\bb\star}(\EC/K_\cyc)^\vee\right)\right).
\]
\end{proposition}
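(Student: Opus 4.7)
The plan is to mirror the ordinary case proof of Proposition~\ref{prop:ord-count} almost verbatim, replacing Proposition~\ref{prop:control-ord} by its signed analog Proposition~\ref{prop:signed-control} and Proposition~\ref{prop:ordinary-fg-m0} by Corollary~\ref{cor:project-ss}. All ingredients are in place.

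First, I would establish a signed analog of Lemma~\ref{lem:rank-char}: under (\ref{hyp:SC}), for any $(a:b) \in \sT(\EC,K)^{\bullet\star}$ one has $(f_{a,b}) \mid \Char_\Lambda(\Sel^{\bullet\star}(\EC/K_\infty)^\vee)$. By Theorem~\ref{thm:structure} (whose hypothesis $M_o = 0$ is supplied by Proposition~\ref{prop:ss-fg-m0} together with Remark~\ref{rk:mixed-m0}; note $\Lambda$-torsionness of $\Sel^{\bullet\star}(\EC/K_\infty)^\vee$ follows from Corollary~\ref{cor:torsion-control} applied to the cyclotomic torsion assumption), we may write
\[
\Sel^{\bullet\star}(\EC/K_\infty)^\vee \simeq \bigoplus_{i=1}^m \Lambda/(g_i).
\]
Taking $H_{a,b}$-coinvariants and applying Proposition~\ref{prop:signed-control} yields
\[
\Sel^{\bullet\star}(\EC/K_{a,b})^\vee \simeq \bigoplus_{i=1}^m \Lambda_{a,b}/(\pi_{a,b}(g_i)),
\]
which is $\Lambda_{a,b}$-torsion if and only if $f_{a,b} \nmid \prod_i g_i$. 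Hence $(a:b) \in \sT(\EC,K)^{\bullet\star}$ forces $f_{a,b} \mid \prod_i g_i$, i.e., $(f_{a,b}) \mid \Char_\Lambda(\Sel^{\bullet\star}(\EC/K_\infty)^\vee)$.

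Next, since the elements $\{f_{a,b}\}_{(a:b) \in \PP^1(\Zp)}$ are pairwise coprime irreducibles of $\Lambda$, the above divisibilities combine into
\[
\prod_{(a:b) \in \sT(\EC,K)^{\bullet\star}} f_{a,b} \;\Big|\; \Char_\Lambda(\Sel^{\bullet\star}(\EC/K_\infty)^\vee).
\]
Because $\Sel^{\bullet\star}(\EC/K_\cyc)^\vee$ is assumed $\Lambda_\cyc$-torsion, the cyclotomic direction $(1:0)$ does not lie in $\sT(\EC,K)^{\bullet\star}$. Hence for every $(a:b) \in \sT(\EC,K)^{\bullet\star}$ we have $b \ne 0$ and $\pi_\cyc(f_{a,b}) = (1+Y)^b - 1$ is divisible by $Y$ in $\Lambda_\cyc$.

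Finally, I would apply Corollary~\ref{cor:project-ss} with $(a:b) = (1:0)$ (so that $K_{a,b} = K_\cyc$, and the torsion hypothesis there is exactly our standing assumption) to obtain
\[
\pi_\cyc\bigl(\Char_\Lambda(\Sel^{\bullet\star}(\EC/K_\infty)^\vee)\bigr) = \Char_{\Lambda_\cyc}\bigl(\Sel^{\bullet\star}(\EC/K_\cyc)^\vee\bigr).
\]
Applying $\pi_\cyc$ to the product divisibility above then shows
\[
Y^{n(\EC,K)^{\bullet\star}} \;\Big|\; \Char_{\Lambda_\cyc}\bigl(\Sel^{\bullet\star}(\EC/K_\cyc)^\vee\bigr),
\]
which is the claimed inequality. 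The only delicate step is ensuring that Corollary~\ref{cor:project-ss} actually applies, i.e., that $\Sel^{\bullet\star}(\EC/K_\infty)^\vee$ is $\Lambda$-torsion with no non-trivial pseudo-null submodules; but this has already been secured by the torsion hypothesis on the cyclotomic Selmer group combined with Proposition~\ref{prop:ss-fg-m0} and Remark~\ref{rk:mixed-m0}. No new obstacle arises beyond those handled in the ordinary case.
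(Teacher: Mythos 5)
Your proof is correct and follows exactly the route the paper takes: the published proof simply says that, in light of Corollary~\ref{cor:project-ss}, the argument of Proposition~\ref{prop:ord-count} carries over, which is precisely what you have done, with the signed analog of Lemma~\ref{lem:rank-char} and the appeals to Proposition~\ref{prop:signed-control}, Corollary~\ref{cor:torsion-control}, Proposition~\ref{prop:ss-fg-m0} and Remark~\ref{rk:mixed-m0} spelled out explicitly. No discrepancies with the paper's reasoning.
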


\begin{proof}
In light of Corollary~\ref{cor:project-ss}, this follows from the same proof as that of Proposition~\ref{prop:ord-count}.
\end{proof}

\subsection{Sufficient conditions for \texorpdfstring{\eqref{hyp:SC}}{}}
In this section, we prove the supersingular analogue of Proposition~\ref{prop:SC-ord}.

\begin{proposition}
\label{prop:mismatched-lambda}
Let $\EC/\Q$ be an elliptic curve and $p$ be a prime of good \emph{supersingular} reduction such that $a_p(\EC/\Q)=0$.
Suppose furthermore that $p$ does not divide the Tamagawa number of $\EC/K$.
Then $\Sel^{\bc}(\EC/K_\infty)^\vee$ is cyclic over $\Lambda$ for one choice of $\bc$ if and only if $\Sel^\bc(\EC/K_\infty)^\vee$ is cyclic over $\Lambda$ for all four choices of $\bc$.
\end{proposition}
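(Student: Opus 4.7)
The strategy is to invoke Nakayama's lemma so as to reduce cyclicity of each $\Sel^\bc(\EC/K_\infty)^\vee$ to a statement at the base field $K$, and then to exploit Remark~\ref{rk:same}, which asserts that $\Sel^\bc(\EC/K)=\Sel(\EC/K)$ is independent of $\bc$.

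Set $N^\bc:=\Sel^\bc(\EC/K_\infty)$ and $M^\bc:=(N^\bc)^\vee$. Since $\Lambda$ is a complete local Noetherian ring with maximal ideal $\mathfrak{m}=(p,X,Y)$ and residue field $\F_p$, Nakayama's lemma tells us that a finitely generated compact $\Lambda$-module $M^\bc$ is cyclic over $\Lambda$ if and only if $\dim_{\F_p}(M^\bc/\mathfrak{m}M^\bc)\le 1$. Hence it suffices to prove that this dimension is independent of $\bc$. By Pontryagin duality and the fact that $\mathfrak{m}=I_{G_\infty}+(p)$, one has
\[
M^\bc/\mathfrak{m}M^\bc\;\simeq\;\bigl((N^\bc)^{G_\infty}\bigr)^\vee/p\;\simeq\;\bigl((N^\bc)^{G_\infty}[p]\bigr)^\vee,
\]
so the problem becomes showing that $(N^\bc)^{G_\infty}[p]$ has $\F_p$-dimension independent of $\bc$.

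The key input is a control theorem at the base. I would argue that under the running hypotheses the natural restriction map
\[
\Sel^\bc(\EC/K)\longrightarrow \Sel^\bc(\EC/K_\infty)^{G_\infty}=(N^\bc)^{G_\infty}
\]
has trivial kernel and cokernel. The argument parallels that of Proposition~\ref{prop:signed-control}, now with $\cK=K$ and $H=G_\infty$: the global inflation-restriction kernel $H^1(G_\infty,\EC(K_\infty)[p^\infty])$ vanishes since $\EC(K_\infty)[p^\infty]=0$ by \cite[Lemma~3.4]{lei2021akashi}; at primes $v\mid p$, the relevant local map on the quotients $H^1(\cdot,\Ep)/H^1_\bullet(\cdot,\Ep)$ is injective by the same deep ramification argument as in \cite[Proposition~5.8]{leisujatha}, which exploits $a_p(\EC/\Q)=0$ and the total ramification of $\fp$ and $\fq$ in $K_\ac/K$; at primes $v\nmid p$ of bad reduction, the local kernel is a subquotient of groups annihilated by (a power of) the Tamagawa number $c_v$, and the hypothesis that $p\nmid c_v$ together with the $p$-primary nature of Selmer groups forces this kernel to be trivial. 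Granting this base control theorem, we obtain
\[
(N^\bc)^{G_\infty}[p]\;\simeq\;\Sel^\bc(\EC/K)[p]\;=\;\Sel(\EC/K)[p],
\]
where the final equality is Remark~\ref{rk:same}. The right-hand side is manifestly independent of $\bc$, so $\dim_{\F_p}(M^\bc/\mathfrak{m}M^\bc)=\dim_{\F_p}\Sel(\EC/K)[p]$ for every $\bc\in\{+,-\}^2$. By Nakayama, $M^\bc$ is cyclic for one choice of $\bc$ if and only if it is cyclic for all four.

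The main obstacle I expect is carrying out the base control theorem with enough precision to eliminate the $p$-parts of the kernel and cokernel, rather than merely bounding them finitely. The $v\mid p$ analysis is a direct transcription of \cite[Proposition~5.8]{leisujatha}, but one must be attentive to the fact that $K_\infty/K$ is a $\Zp^2$-extension rather than a $\Zp$-extension, so the local extensions $K_{\infty,\fr}/K_\fr$ need to be treated with care. The $v\nmid p$ analysis is where the Tamagawa hypothesis is essential: it ensures that the finite local mismatches at bad primes contain no $p$-torsion and hence cannot contaminate the comparison of $\F_p$-dimensions across the four choices of $\bc$.
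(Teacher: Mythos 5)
Your argument is correct and is essentially the paper's own proof: both reduce to the control-theorem identification $\Sel^{\bc}(\EC/K_\infty)^{G_\infty}\simeq\Sel(\EC/K)$ (the kernel vanishing because $\EC(K_\infty)[p^\infty]=0$, the local kernels at $v\nmid p$ killed by the Tamagawa hypothesis via Greenberg's Lemma~3.3, and injectivity at $v\mid p$ as in the signed control theorem), and then apply Nakayama's lemma so that cyclicity over $\Lambda$ is governed by the $\bc$-independent module $\Sel(\EC/K)^\vee$. Your passage to the mod-$\mathfrak{m}$ fibre is just a more explicit form of the same Nakayama step.
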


\begin{proof}
As in the proof of Proposition~\ref{prop:torsion-finite}, we have an injection
\[
\Sel(\EC/K)\hookrightarrow \Sel^{\bc}(\EC/K_\infty)^{G_\infty}
\]
whose cokernel is isomorphic to $\prod_{v\nmid p}\ker\gamma_v^\bc$. It follows from \cite[proof of Lemma~3.3]{greenberg99} that this cokernel is trivial under our hypothesis on the Tamagawa number of $\EC/K$.
Therefore, by Nakayama's Lemma, $\Sel^{\bc}(\EC/K_\infty)^\vee$ is a cyclic $\Lambda$-module if and only if $\Sel(\EC/K)^\vee$ is a cyclic $\Zp$-module (which is independent of the choice of $\bc$).
\end{proof}

\begin{corollary}
\label{cor:sc-satisfied}
Let $\EC$, $K$ and $p$ be as in Proposition~\ref{prop:mismatched-lambda}.
If there exists $\heartsuit\in\{+,-\}$ such that $\lambda(\Sel^{\hh}(\EC/K_\cyc))\le 1$ and $\mu(\Sel^{\hh}(\EC/K_\cyc))=0$, then $\Sel^\bc(\EC/K_\infty)^\vee$ satisfies \eqref{hyp:SC} for all four choices of $\bc$.
\end{corollary}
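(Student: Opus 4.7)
The plan is to reduce, via Proposition~\ref{prop:mismatched-lambda}, to showing that $\Sel^{\hh}(\EC/K_\infty)^\vee$ is cyclic as a $\Lambda$-module for the given same-sign choice of $\heartsuit$. Once cyclicity is established for this one sign, Proposition~\ref{prop:mismatched-lambda} immediately upgrades it to cyclicity for all four choices of $\bc$, and a cyclic torsion $\Lambda$-module is trivially a direct sum (with a single summand) of cyclic torsion modules, so \eqref{hyp:SC} holds in every case.

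Set $M=\Sel^{\hh}(\EC/K_\infty)^\vee$. The control theorem (Proposition~\ref{prop:signed-control}) supplies an isomorphism $\Sel^{\hh}(\EC/K_\cyc)\cong\Sel^{\hh}(\EC/K_\infty)^{H_\cyc}$ with $H_\cyc=\overline{\langle\sigma\rangle}$; dualizing identifies $M/XM$ with $\Sel^{\hh}(\EC/K_\cyc)^\vee$, since $X=\sigma-1$ generates the augmentation ideal of $\Zp\lb H_\cyc\rb$. Topological Nakayama over $\Lambda$ with maximal ideal $(p,X,Y)$ then equates the cyclicity of $M$ over $\Lambda$ with the cyclicity of $\Sel^{\hh}(\EC/K_\cyc)^\vee$ over $\Lambda_\cyc$.

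The core of the argument, modeled on the proof of Proposition~\ref{prop:SC-ord}, is to deduce the cyclicity of $\Sel^{\hh}(\EC/K_\cyc)^\vee$ over $\Lambda_\cyc$ from the hypothesis $\lambda\le1$, $\mu=0$. Structurally this forces $\Sel^{\hh}(\EC/K_\cyc)\cong(\Qp/\Zp)^r\oplus F$ with $r\le1$ and $F$ finite, so it suffices to eliminate the finite part $F$. For this I need that $\Sel^{\hh}(\EC/K_\cyc)^\vee$ has no nonzero finite $\Lambda_\cyc$-submodule, equivalently that $\Sel^{\hh}(\EC/K_\cyc)$ admits no proper $\Lambda_\cyc$-submodule of finite index. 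This is the direct signed analog of the invocation of \cite[Proposition~4.8]{greenberg99} in the ordinary setting; granted it, the cyclotomic Selmer is forced to be $\Qp/\Zp$ or $0$, whose Pontryagin dual is visibly cyclic over $\Lambda_\cyc$.

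The main obstacle is thus this no-finite-submodule statement for the cyclotomic signed Selmer group. My plan to produce it is to rerun the verification of the Greenberg conditions \textbf{RFX}, \textbf{LEO}, \textbf{CRK}, \textbf{LOC} from the proof of Proposition~\ref{prop:ss-fg-m0} in the one-variable cyclotomic setting---where pseudo-nullity over $\Lambda_\cyc$ reduces to finiteness---and then apply \cite[Proposition~4.1.1]{Greenberg16}. The signed local conditions for the cyclotomic tower are already handled in the literature on Kobayashi's $\pm$-Selmer groups (cf.~\cite{parham}), so each hypothesis should transfer without substantive change. After this input is secured, the chain ``$\Sel^{\hh}(\EC/K_\cyc)^\vee$ cyclic over $\Lambda_\cyc$ $\Rightarrow$ $M$ cyclic over $\Lambda$ $\Rightarrow$ cyclicity for all four $\bc$ (by Proposition~\ref{prop:mismatched-lambda}) $\Rightarrow$ \eqref{hyp:SC}'' closes the argument.
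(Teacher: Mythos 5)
Your argument is correct and follows the same chain as the paper's proof: no proper finite-index $\Lambda_\cyc$-submodules plus $\lambda\le1$, $\mu=0$ forces $\Sel^{\hh}(\EC/K_\cyc)\simeq\Qp/\Zp$ or $0$, hence co-cyclic over $\Lambda_\cyc$; the control theorem (Proposition~\ref{prop:signed-control}) and Nakayama's lemma then give cyclicity of $\Sel^{\hh}(\EC/K_\infty)^\vee$ over $\Lambda$; and Proposition~\ref{prop:mismatched-lambda} propagates cyclicity to all four choices of $\bullet\star$, which trivially yields \eqref{hyp:SC}. (Incidentally, your appeal to Proposition~\ref{prop:signed-control} is the intended reference; the printed proof cites Corollary~\ref{cor:main-result-ss} there, which is evidently a typo.) The one place where you diverge is the key input that $\Sel^{\hh}(\EC/K_\cyc)$ has no proper $\Lambda_\cyc$-submodule of finite index: the paper simply quotes this from the proof of Theorem~3.14 in \cite{kim13}, whereas you propose to re-derive it by running the Greenberg conditions \textbf{RFX}, \textbf{LEO}, \textbf{CRK}, \textbf{LOC} of Proposition~\ref{prop:ss-fg-m0} in the one-variable cyclotomic setting. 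That route is plausible (the needed cotorsionness of the same-sign Selmer group over $K_\cyc$ is Lemma~\ref{lem:ss-cyclo-torsion}, and the signed local conditions are divisible), but it is genuinely more work than ``transfers without substantive change'': verifying the corank and local conditions for the $\pm$-conditions over $K_\cyc$ is essentially the content of Kim's theorem, so in your write-up this step remains a sketched plan rather than a proof. Since the statement you need is already in the literature, the cleanest fix is to cite it, as the paper does; with that substitution your argument coincides with the paper's.
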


\begin{proof}
By \cite[proof of Theorem 3.14]{kim13}, $\Sel^{\hh}(\EC/K_\cyc)$ has no proper $\Lambda_\cyc$-submodules of finite index.
Thus if $\lambda(\Sel^{\hh}(\EC/K_\cyc)\le 1$ and $\mu(\Sel^{\hh}(\EC/K_\cyc))=0$, it follows that $\Sel^{\hh}(\EC/K_\cyc) \simeq \Qp/\Zp$ or $0$.
In particular, it is co-cyclic over $\Lambda_\cyc$.
Now, it follows from Proposition~\ref{cor:main-result-ss} and Nakayama's lemma that $\Sel^{\hh}(\EC/K_\infty)^\vee$ is cyclic over $\Lambda$.
The same holds for all four choices of $\bc$ by Proposition~\ref{prop:mismatched-lambda}.
In particular, \eqref{hyp:SC} holds.
\end{proof}

\subsection{Signed Selmer groups under \texorpdfstring{\eqref{hyp: GHH}}{}}
In this section, we study the signed Selmer groups under the modified Heegner hypothesis.

\begin{theorem}
\label{thm:signed-Selmer-anticyclo}
Suppose that \eqref{hyp: GHH} is satisfied for $(K,N_{\EC})$.
Then the $\Lambda_{\ac}$-modules $\Sel^{++}(\EC/K_\ac)^\vee$ and $\Sel^{--}(\EC/K_\ac)^\vee$ are of rank one, whereas $\Sel^{+-}(\EC/K_\ac)^\vee$ and $\Sel^{-+}(\EC/K_\ac)^\vee$ are $\Lambda_\ac$-torsion.
\end{theorem}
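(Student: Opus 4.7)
The plan is to combine three ingredients: the anticyclotomic Mordell--Weil rank growth of Nekov\'a\v{r}--Vatsal under \eqref{hyp: GHH}, the short exact sequence relating signed and classical Selmer groups, and a Heegner point construction to distinguish the same-sign from the mixed-sign components.

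First, under \eqref{hyp: GHH} the sign of the functional equation of $L(\EC/K,s)$ is $-1$, so by the Nekov\'a\v{r}--Vatsal formula (already invoked in the proof of Corollary \ref{cor:main-result-ord}) one has $\rank_\Z \EC(K_{\ac,(n)}) = p^n + O(1)$, giving $\rank_{\Lambda_\ac} \Sel(\EC/K_\ac)^\vee \ge 1$. The short exact sequence
\[
0 \to \EC^+(K_{(m,n),\fr}) \cap \EC^-(K_{(m,n),\fr}) \to \EC^+(K_{(m,n),\fr}) \oplus \EC^-(K_{(m,n),\fr}) \to \widehat{\EC}(K_{(m,n),\fr}) \to 0
\]
from the proof of Lemma \ref{lem:bound-rank-pm} yields a map $\bigoplus_{\bc} \Sel^\bc(\EC/K_\ac) \to \Sel(\EC/K_\ac)$ with finite cokernel, so $\sum_{\bullet,\star\in\{+,-\}} \rank_{\Lambda_\ac} \Sel^\bc(\EC/K_\ac)^\vee \ge 1$.

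Second, I would establish an upper bound $\rank_{\Lambda_\ac} \Sel^\bc(\EC/K_\ac)^\vee \le 1$ for each $\bc$ via Poitou--Tate duality. The global $\Lambda_\ac$-corank of $H^1(G_\Sigma(K_\ac), \EC[p^\infty])$ can be computed by adapting \cite{OV03} to the $\Zp$-extension $K_\ac$, while each signed local term $J_\fr^\bullet(\EC/K_\ac)$ has $\Lambda_\ac$-corank exactly $1$ (for $\fr \in \{\fp, \fq\}$) under the totally-ramified hypothesis. Assembling these via the Poitou--Tate sequence yields the claimed bound.

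Third, I would invoke a Heegner point argument to pin down which components carry rank. Under \eqref{hyp: GHH} the norm-compatible system of Heegner points $(y_n)_n \in \varprojlim_n \EC(K_{\ac,(n)})$ yields, via Kummer theory, a non-torsion Iwasawa class whose local images at $\fp$ and $\fq$ satisfy Kim's trace-compatibility relations \cite{kim14}, placing the class simultaneously in $\Sel^{++}(\EC/K_\ac)$ and $\Sel^{--}(\EC/K_\ac)$. Combined with the upper bound, both same-sign Selmer groups then have $\Lambda_\ac$-rank exactly $1$. Torsionness of $\Sel^{\pm\mp}(\EC/K_\ac)^\vee$ should then follow either from a parity/sign analysis at the primes above $p$ or from the non-vanishing of the mixed-sign anticyclotomic $p$-adic $L$-values at the trivial character (which is consistent with the fact that, under \eqref{hyp: GHH}, only the same-sign $p$-adic $L$-functions are expected to vanish at the trivial character).

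The main obstacle will be the third step: identifying precisely where in the signed decomposition the Heegner norm class lies and, complementarily, ruling out positive rank in the mixed Selmer groups. This requires a delicate local analysis at both primes above $p$ of the Kummer image of the Heegner system with respect to Kim's $\pm$-subgroups. The upper-bound computation in the second step is technical but follows the standard supersingular Iwasawa-theoretic template.
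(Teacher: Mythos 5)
Your plan has genuine gaps at the two places where the real content lies. First, your Step 2 cannot work as stated: the Poitou--Tate/global Euler characteristic computation over $K_\ac$ gives that $H^1(G_\Sigma(K_\ac),\EC[p^\infty])$ has $\Lambda_\ac$-corank $2$ and that each signed local condition at $\fp$ and $\fq$ contributes corank $1$, but this only yields the \emph{lower} bound $\corank_{\Lambda_\ac}\Sel^\bc(\EC/K_\ac)\ge 0$; an upper bound on a Selmer corank never follows from such a count alone. To bound coranks from above one must know that some auxiliary (strict-at-$p$) Selmer group is cotorsion, and that is a deep arithmetic input. This is exactly what the paper imports: Castella--Wan's Lemma~6.7 (the rank identity $\corank\,\Sel^{\emptyset\bullet}=1+\corank\,\Sel^{\bullet 0}$) and Theorem~6.8 (cotorsionness of $\Sel^{\emptyset 0}(\EC/K_\ac)$, which ultimately rests on the reciprocity law for Heegner/BDP classes), together with complex conjugation and the two exact sequences
\[
0\to \Sel^{\bullet 0}(\EC/K_\ac)\to \Sel^{\bullet 1}(\EC/K_\ac)\to \EC(K_\fq)\otimes\Qp/\Zp,
\qquad
0\to \Sel^{\bullet 1}(\EC/K_\ac)\to \Sel^{\bullet\bullet}(\EC/K_\ac)\oplus\Sel^{\bullet\star}(\EC/K_\ac)\to \Sel^{\bullet\emptyset}(\EC/K_\ac),
\]
the last from \cite[Proposition~4.3]{leisprung}. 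With the same-sign rank-one statement (cited from \cite[Theorem~1.4]{LongoVigni2}), these force $\corank\,\Sel^{\bullet\star}=0$. Your proposed substitute for this step --- ``a parity/sign analysis'' or appeal to expected non-vanishing of mixed-sign $p$-adic $L$-values --- is a heuristic, not an argument, and it is precisely the mixed-sign torsionness that is the new deduction in the paper's proof.

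Second, your Steps 1 and 3 do not establish the same-sign rank-one assertion. The Nekov\'a\v{r}--Vatsal growth plus the decomposition from Lemma~\ref{lem:bound-rank-pm} only bounds the \emph{sum} of the four signed coranks below by $1$, saying nothing about which components carry the rank; and the claim that the norm-compatible Heegner system lies simultaneously in $\Sel^{++}$ and $\Sel^{--}$ is not a routine local check --- the naive Heegner points do not satisfy Kim's trace relations, and one must construct plus/minus Heegner classes (Darmon--Iovita, Longo--Vigni, Castella--Wan). That construction is exactly the content of the Longo--Vigni theorem the paper cites, so the hardest part of your outline is deferred to the very result you would need to reprove. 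In short: the architecture (lower bound, upper bound, Heegner input) is reasonable, but both the upper bound and the mixed-sign torsionness require the relaxed/strict Selmer bookkeeping and the cotorsionness theorem of Castella--Wan (or an equivalent non-vanishing input), which your proposal neither supplies nor identifies.
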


\begin{proof}
The assertion on the $++$ and $--$ Selmer groups has been proved in \cite[Theorem~1.4]{LongoVigni2} under the modified Heegner hypothesis.
Let $\bullet\in\{+,-\}$.
Recall\footnote{\cite[Lemma~6.7 and Theorem~6.8]{castellawan} are proven under the general Heegner hypothesis which is slightly weaker than \eqref{hyp: GHH} since it allows $N^+$ to include primes which are ramified in $K$.} that \cite[Lemma~6.7]{castellawan} asserts\footnote{See \cite{castellawan} for the definitions of the local conditions $0$ and $\emptyset$, where they are called rel and str, respectively.
In short, these are the weakest and strongest local conditions that one may define at these places.}
\[
\rank_{\Lambda_\ac}\Sel^{\emptyset \bullet}(\EC/K_\ac)^\vee=1+\rank_{\Lambda_\ac}\Sel^{\bullet0}(\EC/K_\ac)^\vee.
\]
Since $\Sel^{\emptyset0}(\EC/K_\ac)^\vee$ is $\Lambda_\ac$-torsion by \cite[Theorem~6.8]{castellawan} 
and that it contains $\Sel^{\bullet0}(\EC/K_\ac)$, it follows that
\[
\rank_{\Lambda_\ac}\Sel^{\emptyset \bullet}(\EC/K_\ac)^\vee=1,\quad \rank_{\Lambda_\ac}\Sel^{\bullet0}(\EC/K_\ac)^\vee=0.
\]
On taking complex conjugation,  we deduce that
\[
\rank_{\Lambda_\ac}\Sel^{ \bullet\emptyset}(\EC/K_\ac)^\vee=\rank_{\Lambda_\ac}\Sel^{\emptyset \bullet}(\EC/K_\ac)^\vee=1.
\]
Furthermore, we have the following defining exact sequence\footnote{See \cite{leisprung} for the definitions of the local condition $1$.} :
\[
0\longrightarrow \Sel^{\bullet 0}(\EC/K_\ac)\longrightarrow \Sel^{\bullet 1}(\EC/K_\ac)\longrightarrow \EC(K_\fq)\otimes\Qp/\Zp.
\]
As both $(\EC(K_\fq)\otimes\Qp/\Zp)^\vee$ and $\Sel^{\bullet 0}(\EC/K_\ac)$ are $\Lambda_\ac$-torsion, it follows that     $\Sel^{\bullet 1}(\EC/K_\ac)^\vee$  is also $\Lambda_\ac$-torsion.

We now consider the exact sequence given as in \cite[Proposition~4.3]{leisprung}:
\[
0\longrightarrow \Sel^{\bullet1}(\EC/K_\ac)\longrightarrow  \Sel^{\bullet\bullet}(\EC/K_\ac)\oplus  \Sel^{\bullet\star}(\EC/K_\ac)\longrightarrow  \Sel^{\bullet\emptyset}(\EC/K_\ac),
\]
where $\star\in\{+,-\}\setminus\{\bullet\}$.
We have seen that the $\Lambda_\ac$-coranks of the first term and the third term of this exact sequence are  equal to $0$ and $1$, respectively.
Since $\Sel^{\bullet\bullet}(\EC/K_\ac)^\vee$ is of rank one, it follows that the $\Lambda_\ac$-rank of $\Sel^{\bullet\star}(\EC/K_\ac)^\vee$ is zero, which proves the second assertion of the theorem.
\end{proof}

\begin{corollary}
Under the same hypothesis as Theorem~\ref{thm:signed-Selmer-anticyclo}, the $\Lambda$-modules  $\Sel^{+-}(\EC/K_\infty)^\vee$ and $\Sel^{-+}(\EC/K_\infty)^\vee$ are torsion.
\end{corollary}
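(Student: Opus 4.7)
The plan is to combine Theorem~\ref{thm:signed-Selmer-anticyclo} with the control theorem established earlier in the appendix. Theorem~\ref{thm:signed-Selmer-anticyclo} gives us, under \eqref{hyp: GHH}, that the $\Lambda_\ac$-modules $\Sel^{+-}(\EC/K_\ac)^\vee$ and $\Sel^{-+}(\EC/K_\ac)^\vee$ are torsion. This is precisely the hypothesis we need to feed into Corollary~\ref{cor:torsion-control}, which propagates torsionness from any single $\Zp$-extension $K_{a,b}$ up to $K_\infty$.

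Concretely, the anti-cyclotomic $\Zp$-extension $K_\ac$ is one of the $\Zp$-extensions $K_{a,b}$ parametrized by $\PP^1(\Zp)$ (with $\Gamma_\ac = \Gamma_{a,b}$ and $\Lambda_\ac = \Lambda_{a,b}$ for the appropriate $(a:b)$). Apply Corollary~\ref{cor:torsion-control} with this choice of $(a:b)$ and $\bc \in \{+-,-+\}$: since $\Sel^\bc(\EC/K_\ac)^\vee$ is $\Lambda_\ac$-torsion by Theorem~\ref{thm:signed-Selmer-anticyclo}, we conclude that $\Sel^\bc(\EC/K_\infty)^\vee$ is torsion over $\Lambda$.

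There is no real obstacle here; the corollary is a direct specialization. The only thing worth double-checking is that the control-theorem input (Proposition~\ref{prop:signed-control}) applies in the anti-cyclotomic direction, which it does because Proposition~\ref{prop:signed-control} is stated uniformly for every $(a:b)\in\PP^1(\Zp)$ under our standing assumptions that $p$ splits in $K$ and both primes above $p$ are totally ramified in $K_\ac/K$.
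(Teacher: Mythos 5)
Your proposal is correct and matches the paper's argument: the paper proves the corollary by combining Proposition~\ref{prop:signed-control} (the control theorem) with the second assertion of Theorem~\ref{thm:signed-Selmer-anticyclo}, and your route through Corollary~\ref{cor:torsion-control} is just that same control-theorem step packaged as an intermediate statement, applied with $(a:b)$ the anti-cyclotomic direction and $\bc\in\{+-,-+\}$.
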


\begin{proof}
This follows by combining Proposition~\ref{prop:signed-control} with the second assertion of Theorem~\ref{thm:signed-Selmer-anticyclo}.
\end{proof}

The torsionness of $\Sel^{+-}(\EC/K_\ac)^\vee$ and $\Sel^{+-}(\EC/K_\ac)^\vee$ means that a supersingular analogue of the second assertion of Corollary~\ref{cor:main-result-ord} is not available, at least not via the strategy employed in the main body of the article.
Indeed, under \eqref{hyp: GHH}, we would have
\[
\ord_Y\left(\Char_{\Lambda_\cyc}\Sel^\bc(\EC/K_\cyc)\right)\ge1
\]
for all four choices of $\bc$.
If equality holds, since $\Sel^{+-}(\EC/K_\ac)^\vee$ and $\Sel^{+-}(\EC/K_\ac)^\vee$ are torsion $\Lambda_\ac$, we cannot rule out that $\Sel^{+-}(\EC/K_{a,b})^\vee$ or $\Sel^{+-}(\EC/K_{a,b})^\vee$ might have positive rank over $\Lambda_{a,b}$ for some $(a:b)\in\PP^1(\Zp)$.

Despite this shortcoming, we may still find examples where the signed Selmer groups over $K_\infty$ are co-cyclic, answering Hida's question in new settings.
We build on the following supersingular extension of
Theorem~\ref{thm: matar-nekovar 4.8} proved by A.~Matar \cite{matar21}.

\begin{theorem}\label{thm:Matar}
Let $\EC/\Q$ be an elliptic curve of conductor $N$ and let $K$ be an imaginary quadratic field such that every prime $\ell \mid N$ splits in $K$.
Let $p$ be an odd prime of good \emph{supersingular} reduction of $\EC$.
Further suppose that
\begin{itemize}
\item $p$ splits in $K$ and both primes above $p$ in $K$ are totally ramified in $K_{\ac}/K$.
\item $p$ does not divide the Tamagawa number of $\EC/\Q$.
\item the basic Heegner point $y_K\notin p\EC(K)$.
\end{itemize}
Then, $\Sel^{++}(\EC/K_{\ac})^\vee$ and $\Sel^{--}(\EC/K_{\ac})^\vee$ are both free modules with $\Lambda_{\ac}$-rank equal to 1.
\end{theorem}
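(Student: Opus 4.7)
The plan is to mimic the Matar--Nekov\'a\v{r} strategy of Theorem~\ref{thm: matar-nekovar 4.8} (used in the ordinary case) but with signed Heegner classes in place of ordinary Heegner classes. The starting point is that Theorem~\ref{thm:signed-Selmer-anticyclo} already gives $\rank_{\Lambda_\ac}\Sel^{++}(\EC/K_\ac)^\vee=\rank_{\Lambda_\ac}\Sel^{--}(\EC/K_\ac)^\vee=1$ under the (modified) Heegner hypothesis, so the remaining content is to upgrade ``rank one'' to ``free of rank one,'' i.e.\ to rule out nontrivial $\Lambda_\ac$-torsion in the Pontryagin duals.

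First, I would produce the free rank-one submodule. Under our splitting/ramification assumption on $p$ in $K_\ac/K$, Kobayashi-type signed local conditions at the two primes above $p$ can be used to construct, from the tower of Heegner points $\{y_n\in\EC(K_{\ac,(n)})\}$, signed Heegner classes $\mathbf{z}^\bullet_\infty\in\Sel^{\bullet\bullet}(\EC/K_\ac)$ for $\bullet\in\{+,-\}$ (this is the content of the supersingular Heegner point machinery of Darmon--Iovita / Castella--Hsieh / Kobayashi in the anti-cyclotomic setting, which the hypothesis ``every prime $\ell\mid N$ splits in $K$'' permits). The first step is to show that each $\mathbf{z}^\bullet_\infty$ generates a free $\Lambda_\ac$-submodule of $\Sel^{\bullet\bullet}(\EC/K_\ac)$. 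For this I would use the signed analogue of the control theorem (Proposition~\ref{prop:signed-control}) together with the hypothesis $y_K\notin p\EC(K)$: the class $\mathbf{z}^\bullet_\infty$ specializes at the bottom layer to (a nonzero multiple of) $y_K\bmod p$, so it is nonzero modulo the augmentation ideal of $\Lambda_\ac$, which forces $\Lambda_\ac\cdot\mathbf{z}^\bullet_\infty$ to be free.

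Next, I would bound the quotient $\Sel^{\bullet\bullet}(\EC/K_\ac)^\vee / \Lambda_\ac\cdot\mathbf{z}^\bullet_\infty$ via a signed anti-cyclotomic Kolyvagin argument. Using Kolyvagin derivative classes at Kolyvagin primes $\ell\nmid Np$ that are inert in $K$ and remain inert in $K_\ac$, one constructs cohomology classes satisfying the signed local conditions at $\fp,\fq$ (this requires checking that the derivative construction is compatible with the $H^1_\pm$-subspaces, which uses the explicit description of these via the formal group). These classes give, as in Kolyvagin's method, relations that force the characteristic ideal of $\Sel^{\bullet\bullet}(\EC/K_\ac)^\vee/\Lambda_\ac\cdot\mathbf{z}^\bullet_\infty$ to divide the index of the derivative classes in a universal norm module. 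The hypothesis $y_K\notin p\EC(K)$ makes the bottom-layer Kolyvagin class primitive, which, propagated through the tower, forces this quotient to be pseudo-null.

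Finally, I would invoke Proposition~\ref{prop:ss-fg-m0} (no nontrivial pseudo-null submodules in $\Sel^{\bullet\bullet}(\EC/K_\infty)^\vee$) and its specialization to $K_\ac$ via the control theorem to conclude that the pseudo-null quotient is in fact zero, so $\Sel^{\bullet\bullet}(\EC/K_\ac)^\vee$ coincides with its free rank-one submodule generated by the dual of $\mathbf{z}^\bullet_\infty$. The Tamagawa-number hypothesis is used exactly where it appeared in Proposition~\ref{prop:mismatched-lambda}: to ensure that the kernels of the local restriction maps at primes $v\nmid p$ are trivial, so that Nakayama-type arguments pass cleanly between $K$ and $K_\ac$. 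The main obstacle I expect is the second paragraph: carrying out the supersingular, signed variant of the Kolyvagin Euler-system divisibility in the anti-cyclotomic tower, because one must check that Kolyvagin's derived classes satisfy the plus/minus conditions at $\fp$ and $\fq$ at every finite level --- the ordinary proof in \cite{matarnekovar} bypasses this issue by using the unique unramified/ordinary local condition, which has no direct analogue in the supersingular regime.
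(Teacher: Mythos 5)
Your proposal does not match the paper, because the paper does not prove this theorem at all: its proof consists of citing \cite[Theorem~3.1]{matar21}, together with the single observation that Matar's running assumption $p\ge 5$ may be relaxed to $p$ odd since the proof of his Theorem~3.1 relies only on his Theorem~2.2, which holds for all odd $p$. Your sketch of a self-contained signed Heegner/Kolyvagin argument is therefore a genuinely different route, but it has a real gap at its decisive step. The second paragraph of your plan --- constructing Kolyvagin derivative classes compatible with the plus/minus local conditions at $\fp$ and $\fq$ at every layer of $K_\ac/K$, and deducing an Euler-system bound on $\Sel^{\bullet\bullet}(\EC/K_\ac)^\vee$ modulo the Heegner submodule --- is precisely the hard content of Matar's theorem (building on the supersingular anticyclotomic machinery of Darmon--Iovita, Castella--Hsieh and Longo--Vigni); as written it asserts the desired divisibility rather than proving it, and you flag this yourself. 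Without that step your argument only recovers the rank-one statement, which is already Theorem~\ref{thm:signed-Selmer-anticyclo} (your appeal to it is legitimate, since the strict Heegner hypothesis implies \eqref{hyp: GHH}).

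The concluding step also misapplies Proposition~\ref{prop:ss-fg-m0}. That proposition excludes nontrivial pseudo-null $\Lambda$-submodules of $\Sel^{\hh}(\EC/K_\infty)^\vee$; it says nothing about quotients, and it does not descend through the control theorem to the statement you actually need, namely that $\Sel^{\bullet\bullet}(\EC/K_\ac)^\vee$ has no nonzero finite $\Lambda_\ac$-submodule (equivalently, that the signed Selmer group over $K_\ac$ has no proper submodule of finite index, in the spirit of \cite[proof of Theorem~3.14]{kim13}); passing such a statement from a $\Lambda$-module to its $H_{a,b}$-coinvariants requires a separate argument. Note also that pseudo-nullity over $\Lambda_\ac\simeq\Zp\llbracket X\rrbracket$ means finiteness, so your Kolyvagin step must produce a quotient that is finite over $\Lambda_\ac$, not merely pseudo-null over the two-variable algebra. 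Granted both inputs --- the Euler-system bound and the absence of finite submodules --- your final deduction does go through (a torsion-free rank-one $\Lambda_\ac$-module containing a free rank-one submodule with finite quotient must equal that submodule), but those two inputs are exactly what remains unproved; the course the paper takes is simply to quote \cite[Theorem~3.1]{matar21} and to justify only the relaxation from $p\ge 5$ to $p$ odd.
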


\begin{proof}
See \cite[Theorem~3.1]{matar21}.
In \S~2, the author of \emph{op. cit.} notes that $p\geq 5$ throughout the article.
However, the proof of Theorem~3.1 in \emph{op. cit.} does not require the assumption because the proof depends only on Theorem~2.2 in \emph{op. cit.} which holds for all odd $p$.
\end{proof}

\begin{remark}
In the setting of Theorem~\ref{thm:Matar}, $\Sel^{\hh}(\EC/K_{\ac})^\vee$ is a free module with $\Lambda_{\ac}$-rank equal to 1. In particular, it is cyclic over $\Lambda_\ac$ and $\Sel^{\hh}(\EC/K_{\infty})^\vee$ satisfies \eqref{hyp:SC}  (by Nakayama's lemma and Proposition~\ref{prop:signed-control}).
Then using Proposition~\ref{prop:mismatched-lambda} we can conclude that all four signed Selmer groups $\Sel^{\bc}(\EC/K_{\infty})^\vee$ satisfy \eqref{hyp:SC}
\end{remark}

\subsection{Verifying Hypothesis~\texorpdfstring{\eqref{hyp:SC}}{} in the supersingular setting}

If $\EC/\Q$ has good \textit{supersingular} reduction at $p$ with $a_p(\EC)=0$ and $K$ is a fixed imaginary quadratic field, let $L_p^{\pm}(\EC),\; L_p^{\pm}(\EC^{(K)})\in\Z_p\llbracket T\rrbracket$ denote the signed $p$-adic $L$-functions attached to $\EC/\Q$ and $\EC^{(K)}/\Q$ of Pollack \cite{pollack03}.
Let $L_p(\EC,\alpha)\in \Q_p(\alpha)\llbracket T\rrbracket$ denote the unbounded $p$-adic $L$-function of $\EC/\Q$ at a fixed root $\alpha$ of the polynomial $x^2+p$.
We now explain how we can verify hypothesis \eqref{hyp:SC} in the supersingular setting using Corollary~\ref{cor:sc-satisfied} where we recall that we assumed \eqref{hyp: GHH} rather than the strict Heegner hypothesis required in Theorem~\ref{thm:Matar}.

Conditions (1) and (2) of $\S~$\ref{sec:examples-ord} remain in the supersingular setting; however, we may ignore condition (4), and we replace conditions $(0)$ and $(3)$ with
\begin{enumerate}[label={(\arabic*$'$)}]
\setcounter{enumi}{-1}
\item $a_p(\EC/\Q)=0$, $p$ splits in $K$, and both primes of $K$ lying above $p$ are totally ramified in $K_\ac$.
\end{enumerate}
\begin{enumerate}[label={(\arabic*$'$)}]
\setcounter{enumi}{2}
\item $\lambda\big(L_p^\heartsuit(\EC)\big)+\lambda\big(L_p^\heartsuit(\EC^{(K)})\big)=1$ and $\mu\big(L_p^\heartsuit(\EC)\big)=\mu\big(L_p^\heartsuit(\EC^{(K)})\big)=0$ for (at least) one choice of $\heartsuit \in \{+,-\}$.
\end{enumerate}
We need to check one further condition to use Corollary~\ref{cor:sc-satisfied}:
\begin{enumerate}[label={(\arabic*$'$)}]
\setcounter{enumi}{4}
\item $p$ does not divide the Tamagawa number of $\EC/K$.
\end{enumerate}

Let us explain condition (3$'$) in more detail.
Recall the isomorphism from Proposition~\ref{prop:selmer-decomposition-ss}, i.e.,
\[
\Sel^{\hh}(\EC/K_\cyc) \simeq \Sel^{\heartsuit}(\EC/\Q_\cyc) \oplus \Sel^{\heartsuit}(\EC^{(K)}/\Q_\cyc).
\]
As in the ordinary case, we have
\[
\lambda\left(\Sel^\hh(\EC/K_\cyc)^\vee\right)= \lambda\left(\Sel^\heartsuit(\EC/\Q_\cyc)^\vee\right) + \lambda\left(\Sel^\heartsuit(\EC^{(K)}/\Q_\cyc)^\vee\right),
\]
and similarly for the $\mu$-invariants.
Thus, it suffices to study the Iwasawa invariants over $\Q_\cyc$.

For each of our examples, the \textit{analytic} Iwasawa invariants $\lambda(L_p^{{\heartsuit}}(\EC))$ and $\mu(L_{{p}}^{{\heartsuit}}(\EC))$ are available on the LMFDB where $\heartsuit \in \{ +,-\}$.
Thus, we need to ensure that we can make deductions about the algebraic Iwasawa invariants using this data.

On the one hand, by \cite[Theorem 1.3]{kobayashi03} we have
\[
\lambda\left(\Sel^{\hh}(\EC/K_\cyc)^\vee\right) \leq \lambda\big(L_p^\heartsuit(\EC)\big),
\]
and similarly for the $\mu$-invariants.
Therefore, verifying condition (3$'$) allows us to deduce
\[
\lambda\left(\Sel^{\hh}(\EC/K_\cyc)^\vee\right) \leq 1 \quad \text{and} \quad \mu\left(\Sel^{\hh}(\EC/K_\cyc)^\vee\right) =0
\]
for some $\heartsuit \in \{+,-\}$.
On the other hand, the parity conjecture \cite[Theorem 4.29]{kim_parity} and our assumption that $\EC/K$ has root number $-1$ implies
\[
\mathrm{corank}_{\Zp}(\Sel^{\hh}(\EC/K)) \geq 1.
\]
As we have seen in the proof of Proposition~\ref{prop:torsion-finite}, there is an injection $\Sel(\EC/K)\hookrightarrow\Sel^\hh(\EC/K_\cyc)^{\Gamma_\cyc}$. It then follows 
that
\begin{equation*}\label{eq:selmer-lower-bound}
\lambda\left(\Sel^{\hh}(\EC/K_\cyc)^\vee\right)  \geq 1.
\end{equation*}
Thus, condition (3$'$) implies that
\[
\lambda\left(\Sel^\hh(\EC/K_\cyc)^\vee\right) = 1 \quad \text{and} \quad \mu\left(\Sel^\hh(\EC/K_\cyc)^\vee\right) =0,
\]
which allows us to apply Corollary \ref{cor:sc-satisfied} and verify hypothesis $\eqref{hyp:SC}$.

\begin{example}Let $\EC/\Q$ be the elliptic curve with Cremona label \href{https://www.lmfdb.org/EllipticCurve/Q/91a1/}{\texttt{91a1}}, let $K=\Q(\sqrt{-11})$, and let $p=3$.
Note that $\rank_\Z \EC(\Q)=1$.
As before, one can use Magma to easily check conditions (0$'$) and (1).
Note that $91=7 \cdot 13$, and both $7$ and $13$ are inert in $K$, so \eqref{hyp: GHH} holds -- this verifies (2).
The twist $\EC^{(K)}$ has Cremona label \href{https://www.lmfdb.org/EllipticCurve/Q/11011r1/}{\texttt{11011r1}}, and from LMFDB we obtain
\[
\lambda(L_3^+(\EC))=1\qquad\text{and}\qquad \lambda(L_3^+(\EC^{(K)}))=0,
\]
and
\[
\mu(L_3^+(\EC))=\mu(L_3^+(\EC^{(K)}))=0
\]
which verifies condition (3$'$).
It is easy to check that $3$ does not divide the Tamagawa number of $\EC/K$.
This verifies condition (5').
Therefore, Hypothesis~\eqref{hyp:SC} holds for the tuple $(\texttt{91a1},\Q(\sqrt{-11}),3)$.
\end{example}

\bibliographystyle{amsalpha}
\bibliography{references}

\end{document}